\begin{document}

\setlength{\parindent}{5mm}
\renewcommand{\leq}{\leqslant}
\renewcommand{\geq}{\geqslant}
\newcommand{\N}{\mathbb{N}}
\newcommand{\bbB}{\mathbb{B}}
\newcommand{\bbC}{\mathbb{C}}
\newcommand{\bbP}{\mathbb{P}}
\newcommand{\bbQ}{\mathbb{Q}}
\newcommand{\bbR}{\mathbb{R}}
\newcommand{\bbS}{\mathbb{S}}
\newcommand{\bbT}{\mathbb{T}}
\newcommand{\bbZ}{\mathbb{Z}}
\newcommand{\F}{\mathbb{F}}
\newcommand{\g}{\mathfrak{g}}
\newcommand{\h}{\mathfrak{h}}
\newcommand{\RN}{\mathbb{R}^{2n}}
\newcommand{\ci}{c^{\infty}}
\newcommand{\derive}[2]{\frac{\partial{#1}}{\partial{#2}}}
\newcommand{\eps}{\varepsilon}
\newcommand{\B}{\mathbf{B}}
\newcommand{\A}{\mathcal{A}}
\newcommand{\cB}{\mathcal{B}}
\newcommand{\cE}{\mathcal{E}}
\newcommand{\cF}{\mathcal{F}}
\newcommand{\cP}{\mathcal{P}}
\newcommand{\cQ}{\mathcal{Q}}
\newcommand{\cT}{\mathcal{T}}
\newcommand{\ind}{\mathrm{L}}
\newcommand{\Conj}{\mathrm{Conj}}
\newcommand {\sign}{\mathrm {sign}}

\newcommand{\BX}{\mathbf{B}(X)}

\newcommand{\pfrac}[2]{\frac{\partial #1}{\partial #2}}
\newcommand{\Image}{{\rm Im}}
\newcommand{\Ker}{\rm Ker}
\newcommand{\Hom}{{\rm Hom}}
\newcommand{\hHom}{{\mathcal Hom}}
\newcommand{\Coker}{{\rm Coker}}
\newcommand{\Coim}{{\rm Coim}}
\newcommand{\codim}{{\rm codim}}

\newcommand{\Ob}{{\rm Ob}}
\newcommand{\Mor}{{\rm Mor}}
\newcommand{\Fun}{{\rm Fun}}
\newcommand{\Nat}{{\rm  Nat}}
\newcommand{\Hh}{{\mathcal H}}
\newcommand{\I}{{\mathcal I}}
\newcommand{\cI}{{\mathcal I}^{\bullet}}
\newcommand{\cH}{{\mathcal H}^{\bullet}}
\newcommand{\cK}{{\mathcal K}^{\bullet}}
\newcommand{\cU}{{\mathcal U}}
\newcommand{\T}{{\mathcal T}}
\newcommand{\J}{{\mathcal J}}
\newcommand{\M}{{\mathcal M}}
\renewcommand{\L}{{\mathcal L}}
\renewcommand{\O}{{\mathcal O}}
\newcommand \id {{\rm id}}
\newcommand \Cat [1] {{\rm{\bf #1}}}
\newcommand{\supp}{{\rm supp}}

\DeclarePairedDelimiter{\ceil}{\lceil}{\rceil}

\theoremstyle{plain}
\newtheorem{theo}{Theorem}
\newtheorem*{theo*}{Theorem}
\newtheorem{prop}[theo]{Proposition}
\newtheorem{lemma}[theo]{Lemma}
\newtheorem{definition}[theo]{Definition}
\newtheorem*{notation*}{Notation}
\newtheorem*{notations*}{Notations}
\newtheorem{corol}[theo]{Corollary}
\newtheorem{conj}[theo]{Conjecture}
 \newtheorem{question}{Question}
\newtheorem*{conj*}{Conjecture}
\newtheorem*{claim*}{Claim}
\newtheorem{claim}[theo]{Claim}

\newcounter{numexo}[section]
\newtheorem{exo}[theo]{Exercice}
\newtheorem{exos}[numexo]{Exercices}

\newenvironment{demo}[1][]{\addvspace{8mm} \emph{Proof #1.
    ~~}}{~~~$\Box$\bigskip}

\newlength{\espaceavantspecialthm}
\newlength{\espaceapresspecialthm}
\setlength{\espaceavantspecialthm}{\topsep} \setlength{\espaceapresspecialthm}{\topsep}

\newtheorem{exple}[theo]{Example}
\renewcommand{\theexple}{}
\newenvironment{example}{\begin{exple}\rm }{\hfill $\blacktriangleleft$\end{exple}}

\newenvironment{remark}[1][]{\refstepcounter{theo} 
\vskip \espaceavantspecialthm \noindent \textsc{Remark~\thetheo
#1.} }%
{\vskip \espaceapresspecialthm}

\newenvironment{remarks}[1][]{\refstepcounter{theo} 
\vskip \espaceavantspecialthm \noindent \textsc{Remarks~\thetheo
#1.} }%
{\vskip \espaceapresspecialthm}

\def\bb#1{\mathbb{#1}} \def\m#1{\mathcal{#1}}

\def\del{\partial}
\def\Int{\mathrm{Int}}
\def\co{\colon\thinspace}
\def\Homeo{\mathrm{Homeo}}
\def\Conv{\mathrm{Conv}}
\def\Hameo{\mathrm{Hameo}}
\def\Diff{\mathrm{Diff}}
\def\Symp{\mathrm{Symp}}
\def\Sympeo{\mathrm{Sympeo}}
\def\Clos{\mathrm{Clos}}
\def\Id{\mathrm{Id}}
\newcommand{\norm}[1]{||#1||}
\def\Ham{\mathrm{Ham}}
\def\Hamtilde{\widetilde{\mathrm{UHam}}}
\def\UHam{\mathrm{UHam}}
\def\Vol{\mathrm{Vol}}

\def\Crit{\mathrm{Crit}}
\def\Spec{\mathrm{Spec}}
\def\EssSpec{\mathrm{EssSpec}}
\def\dbot{d_{\mathrm{bot}}}
\def\Leb{\mathrm{Leb}}
\def\Fix{\mathrm{Fix}}

\newcommand{\bigslant}[2]{{\raisebox{.2em}{$#1$}\left/\raisebox{-.2em}{$#2$}\right.}}

\definecolor{sobhan}{rgb}{0,.6,0}\newcommand{\sobhan}{\color{sobhan}}
\definecolor{fred}{rgb}{0,0,0.8}\newcommand{\fred}{\color{fred}}


\title{The Anosov-Katok method and pseudo-rotations in symplectic dynamics}
\author{Fr\'ed\'eric Le Roux, Sobhan Seyfaddini}
\date{\today} \date{}

\maketitle
\begin{abstract} 
We prove that toric symplectic manifolds admit Hamiltonian pseudo-rotations with a finite, and in a sense minimal, number of ergodic measures.   The set of ergodic measures of these pseudo-rotations consists of the measure induced by the symplectic volume form  and the  Dirac measures supported at the fixed points of the torus action.   Our construction relies on the conjugation method of Anosov and Katok.

\flushright \emph{Dedicated to Claude Viterbo. Joyeux anniversaire, Claude !}

\end{abstract}
\tableofcontents

\section{Introduction}
The main goal of this paper is to exhibit examples of Hamiltonian diffeomorphisms on symplectic manifolds of dimension greater than two, which, on the one hand, have a finite number of periodic points, and on the other hand, have interesting and complicated dynamics.  We will refer to Hamiltonian diffeomorphisms with a finite number of periodic points as Hamiltonian {\bf pseudo-rotations}.\footnote{There exist several working definitions of Hamiltonian pseudo-rotations in the literature; see  \cite[Def.\ 1.1]{CGG19b} and the discussion therein.  The examples we construct in this article do satisfy the requirements in all definitions known to us.}  Such diffeomorphisms have been of great interest in  dynamical systems and symplectic topology; see, for example, \cite{Anosov-Katok, Fathi-Herman, Fayad-Katok, BCL04, BCL06, Bramham15a, Bramham15b, LeCalvez16, AFLXZ, Ginzburg-Gurel18a, Ginzburg-Gurel18a, CGG19a, CGG19b, Shelukhin19a, Shelukhin19b}. 


The only closed surface admitting Hamiltonian pseudo-rotations is the sphere with the simplest examples being irrational rotations.\footnote{Closed surfaces of positive genus fall under the class of symplectic manifolds which satisfy the Conley conjecture \cite{Hingston, Ginzburg, LeCalvez06} asserting that any Hamiltonian diffeomorphism has infinitely many simple periodic points.  Clearly, such symplectic manifolds do not admit pseudo-rotations.} More interesting examples, with only three ergodic\footnote{Recall that a Borel probability measure $\mu$ is said to be ergodic for  $f: M \to M$ if  it is preserved by $f$ and, furthermore, satisfies the following condition: for any Borel subset $A \subset M$ such that $f^{-1}(A) \subset A$ either $\mu(A) = 0$ or $\mu(A) =1$.} measures, were constructed by Fayad and Katok \cite{Fayad-Katok}, using the so-called conjugation method of Anosov-Katok \cite{Anosov-Katok}.  Note that three is the minimal possible number of ergodic measures for a Hamiltonian diffeomorphism of the sphere because any such diffeomorphism has at least two fixed points and so preserves, in addition to the area, the Dirac delta measures supported at the fixed points.  

Consider a closed symplectic manifold $(M, \omega)$ carrying a Hamiltonian circle action $S$ such that the fixed point set of the action  $\Fix(S)$ is finite and the action is locally free on $M\setminus \Fix(S)$.  Any such manifold, like the sphere and the projective spaces $\bbC P^n$, admits pseudo-rotations obtained from the irrational elements of the circle.  However, these example have very simple dynamics. As we explain below in Section \ref{sec:transitivity},  in this context, a straight-forward adaptation of the Anosov-Katok method yields pseudo-rotations which are transitive, i.e.\ have dense orbits.  The presence of a circle action as above is an indispensable ingredient of the Anosov-Katok method and, as far as we know, there are no known examples of pseudo-rotations on manifolds not carrying circle actions.

Producing pseudo-rotations with  dynamics more complicated than transitivity is more involved and is the main goal of this article.   Our main result, whose proof relies on the Anosov-Katok method, guarantees the existence of pseudo-rotations with a finite, and in a sense optimal, number of ergodic measures on {\bf toric} symplectic manifolds.  Recall that a $2n$-dimensional symplectic manifold is called toric if it admits an effective Hamiltonian action of the torus $\bb T^n$; we review this definition and relevant facts in Section \ref{sec:prelim_symp}.

\begin{theo}\label{theo.main}
Let $(M, \omega)$ be a closed toric symplectic manifold, and denote by $\ell$  the number of fixed points of the corresponding torus action. Then, $(M, \omega)$ admits a Hamiltonian pseudo-rotation $f$ with exactly $\ell+1$ ergodic measures.  The set of ergodic measures of $f$ consists of the measure induced by the symplectic volume form $\omega^n$ and the $\ell$ Dirac measures supported at the fixed points of the torus action.
\end{theo}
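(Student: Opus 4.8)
The plan is to implement the Anosov--Katok conjugation scheme adapted to the toric setting. Write $T:\mathbb T^n \curvearrowright M$ for the toric action and recall (Section~\ref{sec:prelim_symp}) that the moment image is a Delzant polytope $\Delta$; the $\ell$ fixed points of $T$ are the preimages of the vertices of $\Delta$. First I would fix a one-parameter subgroup direction, i.e.\ a line in $\mathbb T^n$ whose closure is the whole torus, so that rotating by a rational point $p_k$ of this subgroup gives a periodic Hamiltonian diffeomorphism $R_{p_k}$ with periodic point set exactly $\Fix(T)$ together with a clean family of periodic orbits filling out the rest of $M$. The pseudo-rotation $f$ is then produced as a $C^\infty$-limit $f=\lim_{k\to\infty} h_k R_{p_{k+1}} h_k^{-1}$, where $p_{k+1}$ is a rational point very close to $p_k$ and $h_k = g_1 g_2\cdots g_k$ is an accumulating product of Hamiltonian diffeomorphisms, each $g_j$ commuting with $R_{p_j}$. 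Convergence is guaranteed, as in Anosov--Katok, by choosing $p_{k+1}$ so close to $p_k$ (depending on the already-chosen $h_k$) that $\|h_k R_{p_{k+1}} h_k^{-1} - h_k R_{p_k} h_k^{-1}\|_{C^k}$ is tiny and summable; since $h_k R_{p_k} h_k^{-1} = h_{k-1} R_{p_k} h_{k-1}^{-1}$ this forces the sequence to be Cauchy in every $C^m$.

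The substance is choosing the conjugating maps $g_j$ to control the ergodic measures. The target invariant measures are $\mu_0 := \omega^n / \int_M \omega^n$ (the normalized Liouville measure) and the Dirac masses $\delta_{x_1},\dots,\delta_{x_\ell}$ at the vertices' fibers. Every iterate $f$ of this construction automatically preserves $\mu_0$ (each $g_j$ is Hamiltonian, hence volume-preserving, and $R_{p_j}$ too) and each $\delta_{x_i}$ (arranging that all $g_j$ fix $\Fix(T)$ pointwise, which is possible since the fixed points are isolated and we can take $g_j$ supported away from small balls around them, or at least tangent to the identity there to high order). To make these the \emph{only} ergodic measures, I would do two things simultaneously along the sequence. (i) \emph{Unique ergodicity off the fixed points on the $\mu_0$ part}: using the standard Anosov--Katok ``good approximation'' lemma, choose $g_j$ (a diffeomorphism commuting with $R_{p_j}$, e.g.\ built by pushing forward a fibered map by a large power of $R$-rotation, or a ``chess-board''/``blender''-type map adapted to the $S^1$-orbits of the rational rotation) so that the $R_{p_{j+1}}$-orbits become $\varepsilon_j$-dense and, more quantitatively, so that every $R_{p_{j+1}}$-orbit spends a proportion close to the Liouville-volume in every element of a fine partition $\mathcal P_j$ of $M\setminus \bigcup_i B(x_i,r_j)$. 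Passing to the limit and letting $r_j\to 0$, an $f$-invariant ergodic measure $\mu$ with $\mu(\Fix(T))=0$ must then agree with $\mu_0$ on each $\mathcal P_j$, hence equal $\mu_0$. (ii) \emph{No escape of mass except to the fixed points}: ensure the construction does not create new invariant sets near $\Fix(T)$; because $T$ is locally free away from $\Fix(T)$ only at the vertices can orbits degenerate, and the Delzant/Morse--Bott structure of the moment map near a vertex gives local normal coordinates in which one arranges $g_j=\Id$ (or $C^j$-close to it) on a neighborhood shrinking to $x_i$, so the limit dynamics near $x_i$ is just a genuine rotation $R_{p_\infty}$ whose only invariant measure supported arbitrarily near $x_i$ is $\delta_{x_i}$ — here $p_\infty$ irrational is what kills extra periodic points.

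Finally I would assemble: the limit $f$ is a smooth Hamiltonian diffeomorphism (Hamiltonian because it is a $C^\infty$-limit of Hamiltonian diffeomorphisms and $\Ham$ is $C^1$-closed in $\Symp_0$, or more directly by tracking the generating Hamiltonians), its fixed/periodic point set is exactly $\Fix(T)$ (the rotation number $p_\infty$ being irrational and the conjugating maps being asymptotically trivial near $\Fix(T)$ kills all other periodic points — this uses that $R_{p_\infty}$ itself, away from $\Fix(T)$, is periodic-point-free for generic irrational $p_\infty$ and that conjugation preserves periodic point counts at each finite stage while the limit argument controls the new ones), so $f$ is a pseudo-rotation; and by (i)+(ii) the ergodic measures are precisely $\mu_0,\delta_{x_1},\dots,\delta_{x_\ell}$, giving $\ell+1$. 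I expect the main obstacle to be step (i): constructing, for each rational rotation $R_{p_j}$, a commuting Hamiltonian diffeomorphism $g_j$ that makes the \emph{next} rational rotation equidistribute with respect to Liouville on the complement of shrinking neighborhoods of the vertices — this is where the ``untwisting'' geometry of the moment map and the combinatorics of the Delzant polytope enter, and where one must reconcile the requirement that $g_j$ be supported away from (or trivial near) $\Fix(T)$ with the requirement that it genuinely spread mass over all of $M$; controlling the $C^k$-size of $g_j$ against the closeness of $p_{j+1}$ to $p_j$ is the usual, but delicate, bookkeeping.
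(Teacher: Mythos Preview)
Your outline captures the Anosov--Katok scaffolding correctly, but step~(i) as you state it is not achievable, and this is exactly the point where the paper's argument diverges from yours. You ask that \emph{every} $R_{p_{j+1}}$-orbit spend a proportion close to the Liouville volume in every cell of a partition of $M\setminus\bigcup_i B(x_i,r_j)$. But the orbit of each fixed point $x_i$ is $\{x_i\}$, and since you require $g_j=\Id$ near $x_i$, any point sufficiently close to $x_i$ has its entire (finite, periodic) $R_{p_{j+1}}$-orbit contained in $B(x_i,r_j)$; such orbits spend proportion $0$ in every cell of $\mathcal P_j$. No conjugation can repair this: the conjugated circle action still fixes the $x_i$. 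So the uniform-over-all-$x$ equidistribution toward $\mu_0$ that you need for the fast-convergence step simply fails. Relatedly, your step~(ii) does not salvage this: if $g_j=\Id$ only on balls of radius $r_j\to 0$, then in the limit $f$ agrees with the irrational rotation only at the points $x_i$, not on any neighborhood, so you have no description of the $f$-dynamics near $x_i$ and cannot conclude that the only invariant measure supported near $x_i$ is $\delta_{x_i}$.

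The paper's resolution is to change the target of the equidistribution. Rather than forcing every orbit measure close to $\mu_0$, Proposition~\ref{prop.main} produces, for each $q$ and each neighborhood $\mathcal U$ of the \emph{convex hull} $\Conv(\mathcal E)$ (with $\mathcal E=\{\mu_0,\delta_{x_1},\dots,\delta_{x_\ell}\}$), a symplectomorphism $h$ commuting with $S_{1/q}$ and equal to the identity near $\Fix(S)$ such that for \emph{every} $x\in M$ the pushforward of $\mathrm{Leb}_{\mathbb S^1}$ under $t\mapsto hS_th^{-1}(x)$ lies in $\mathcal U$. Orbits near a fixed point then have empirical measure close to the corresponding Dirac, orbits far away are close to $\mu_0$, and intermediate orbits are convex combinations; all of these lie in $\mathcal U$. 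This is an open condition on the map, uniform in $x$, so it passes to the limit by fast convergence along a shrinking sequence $\mathcal U_n\searrow\Conv(\mathcal E)$. The endgame then uses a fact you did not invoke: ergodic measures are extreme points of the simplex of invariant measures, so an ergodic $\mu\notin\mathcal E$ lies outside $\Conv(\mathcal E)$; a Birkhoff-generic point for $\mu$ would have empirical measures converging to $\mu\notin\Conv(\mathcal E)$, contradicting that all empirical subsequential limits lie in $\Conv(\mathcal E)$. Note also that once this is established, the pseudo-rotation property is free (any periodic orbit would support an extra ergodic Dirac), so your separate argument for finiteness of periodic points is unnecessary.
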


Applying the above theorem to $(\mathbb CP^n, \omega_{FS})$, which admits a toric action with $n$ fixed points, yields pseudo-rotations with $n+1$ ergodic measures.   

The number $\ell +1$ appears to be optimal because  the rank of the singular homology of $M$ is $\ell$; see, for example,  \cite[Theorem 3.3.1]{Cannas}.  Hence, by the Arnold conjecture any non-degenerate Hamiltonian diffeomorphism of $(M, \omega)$ has at least $\ell$ fixed points and thus must have at least $\ell + 1$ ergodic measures.  Non-degeneracy of the pseudo-rotations we produce can be guaranteed by appropriately applying the Anosov-Katok method, see Remark \ref{rem:nondeg}; moreover,  some authors incorporate non-degeneracy into the definition of pseudo-rotations, see \cite[Def.\ 1.1]{CGG19b}.

Denote by $B_r$  the standard closed Euclidean ball of radius $r$, and let  $\{B_i\}, \{B_i'\}, i = 1, \ldots, k'$, be two collections of pair-wise disjoint subsets of  $(M, \omega)$  all of which are images of $B_r$ under symplectic embeddings.   In the case of the  2-sphere, a key component of the argument is the fact that one can find a  symplectomorphism $\psi$ such that $\psi(B_i) = B_i'$ for $i= 1, \ldots, k$.  However, the existence of such $\psi$ cannot be guaranteed on higher dimensional symplectic manifolds even if $k=1$.  Indeed, under certain assumptions, there exist obstructions to the existence of such $\psi$ which are generally referred to as symplectic camel obstructions; see \cite{McDuff-Salamon} for further details.  The camel-type obstructions do disappear for sufficiently small values of $r$. Hence, by Katok's Basic Lemma \cite{Katok}, one can try to surmount these obstacles by breaking $B_i, B_i'$ into balls of sufficiently small radius to obtain a symplectomorphism $\psi$ such that $\psi(B_i) \Delta B_i'$ is of nearly, but not exactly, zero measure; here $\Delta$ stands for the symmetric difference of sets.  It appears to us that this approach could potentially yield ergodic pseudo-rotations, but we could not utilize it to construct pseudo-rotations with a finite number of ergodic measures.\footnote{As explained in~\cite{Fayad-Katok}, to get ergodicity one needs to control \emph{almost all} orbits; but in order to prove Theorem~\ref{theo.main} one needs to control \emph{all} orbits.} 
Instead, we overcome the camel-type difficulties by taking advantage of the existence of a ``nearly global'' system of action-angle coordinates on toric symplectic manifolds; see Section \ref{sec:prelim_symp}.  It is not clear to us if the assumption of $(M, \omega)$ being toric is necessary for the existence of pseudo-rotations with few ergodic measures.

Finally, we should mention that several authors have used the Anosov-Katok method to produce interesting examples, other than Hamiltonian pseudo-rotations, in higher dimensionsal symplectic manifolds.  In \cite{Katok}, Katok constructs an autonomous Hamiltonian with numerous properties including ergodicity of the restriction of the Hamiltonian flow to   its energy levels.  In \cite{Polterovich99}, Polterovich uses Katok's lemma to prove, among other results, that every closed symplectic manifold admits contractible Hamiltonian loops which are strictly ergodic; see Theorem 1.2.A therein.  In \cite{Hernandez-Presas}, Hern\`andez-Corbato and Presas, construct examples of (non-Hamiltonian) minimal symplectomorphisms and strictly ergodic contactomorphisms.

\subsubsection*{Organization of the paper}
In Sections \ref{sec:general_scheme} \& \ref{sec:fast_convergence} we present the general scheme of the Anosov-Katok method.  In Section \ref{sec:transitivity}, which may be viewed as a warm-up  for the proof of the main result, we explain how to construct transitive pseudo-rotations.  In Section \ref{sec:proof_main_thm}, we state  Proposition \ref{prop.main}, which is the key technical proposition of the paper, and we use it to prove Theorem \ref{theo.main}.  

The rest of the paper is dedicated to the proof of Proposition \ref{prop.main}.  In Section \ref{sec:prelim}, we review the relevant aspects of toric symplectic geometry and prove  preliminary symplectic lemmas which will be used in the following section.  Section \ref{sec:proof_main_prop} is the technical heart of the paper and contains the proof of Proposition \ref{prop.main}.

\subsubsection*{Acknowledgments} 
This project began as an attempt at answering a question of  Viktor Ginzburg and  Ba\c{s}ak  G\"{u}rel; we are grateful to them for stimulating discussions and their interest.  We also thank Laurent Charles, Sylvain Crovisier, Yael Karshon, Leonid Polterovich, Shu Shen, and Maxime Zavidovique for helpful comments.

SS:  This project has received funding from the European Research Council (ERC) under the European Union’s Horizon 2020 research and innovation program (grant agreement No. 851701).

\begin{figure}[H]
\centerline{\includegraphics[width=8cm]{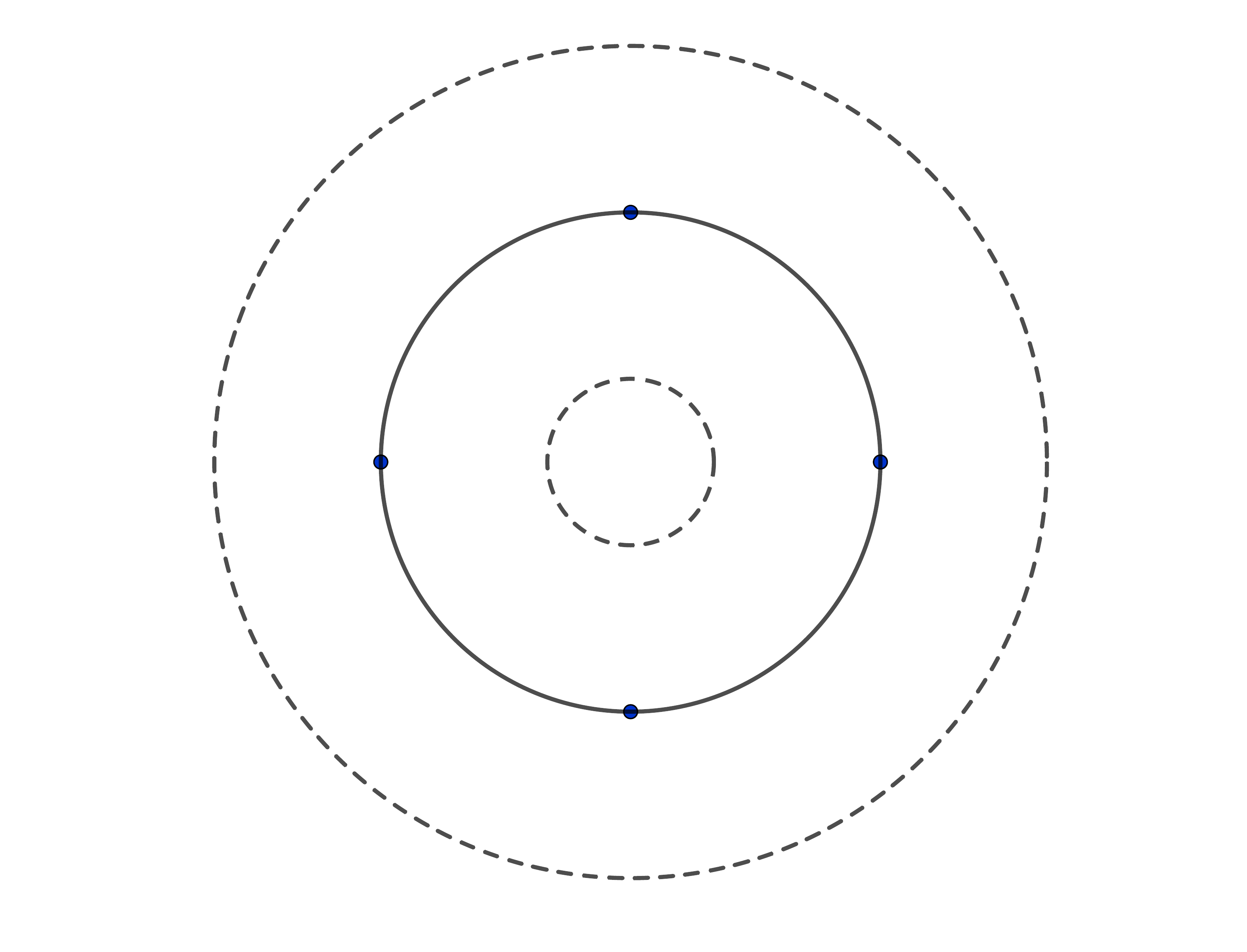}\includegraphics[width=8.5cm]{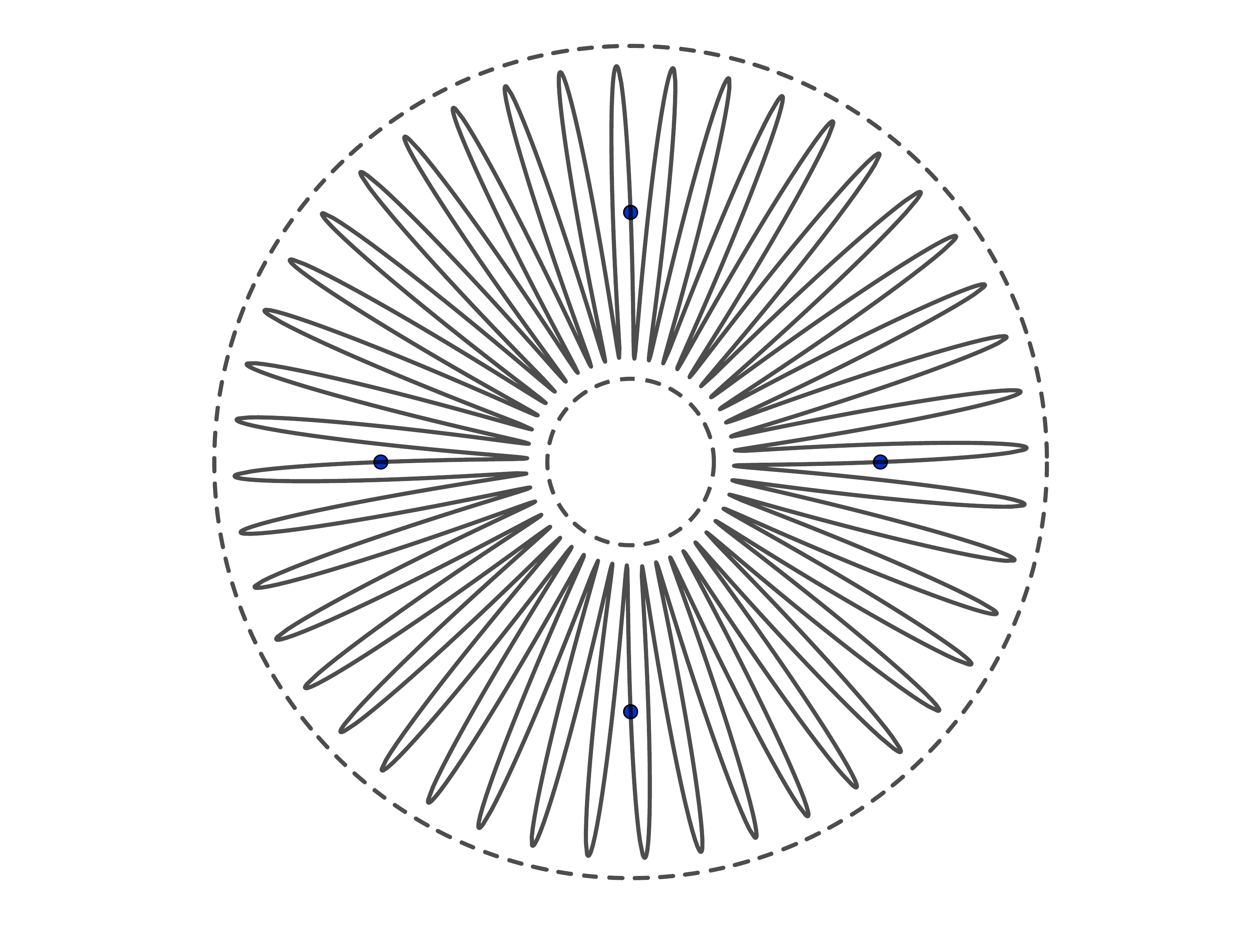}}

\centerline{\includegraphics[width=8.2cm]{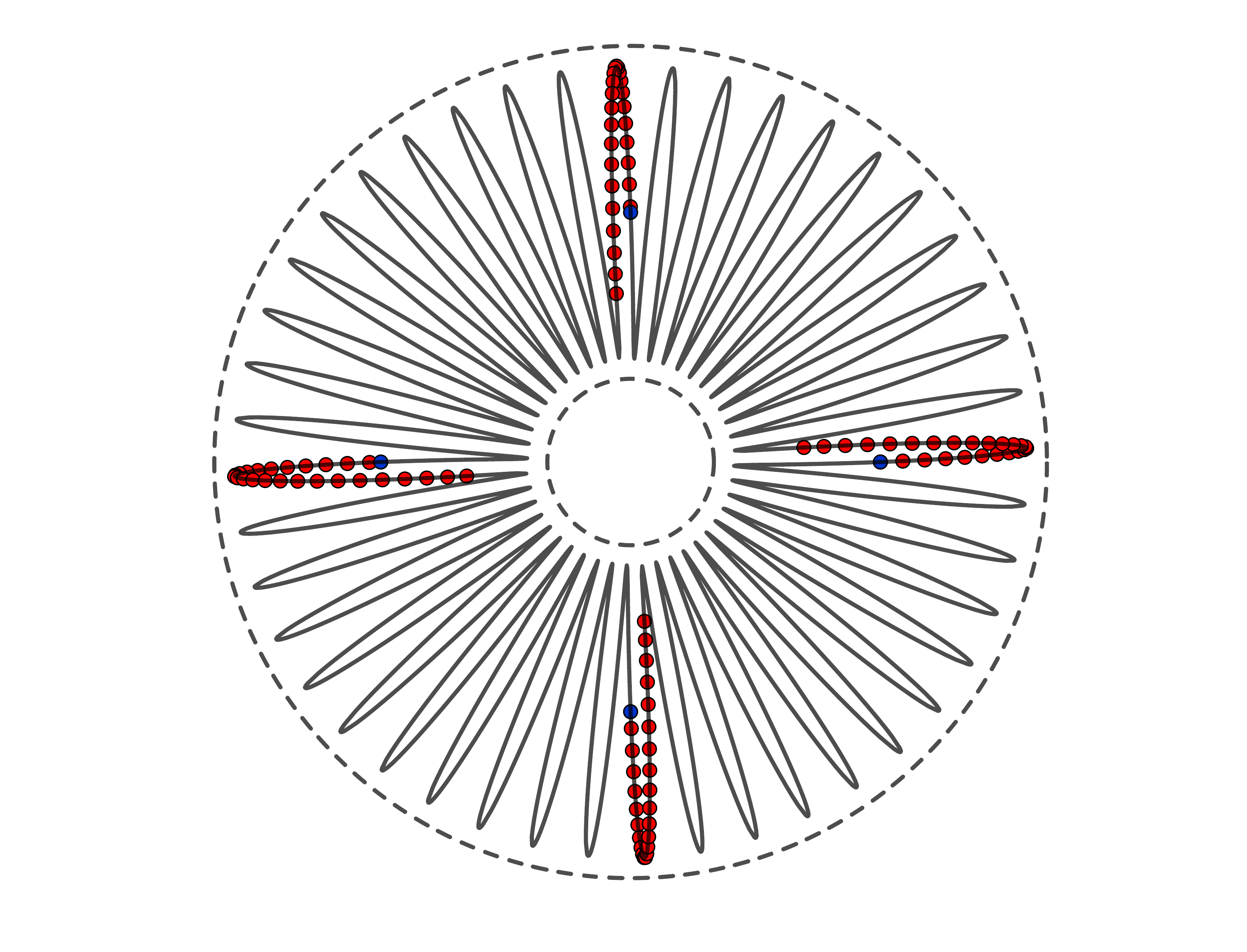}\includegraphics[width=8.2cm]{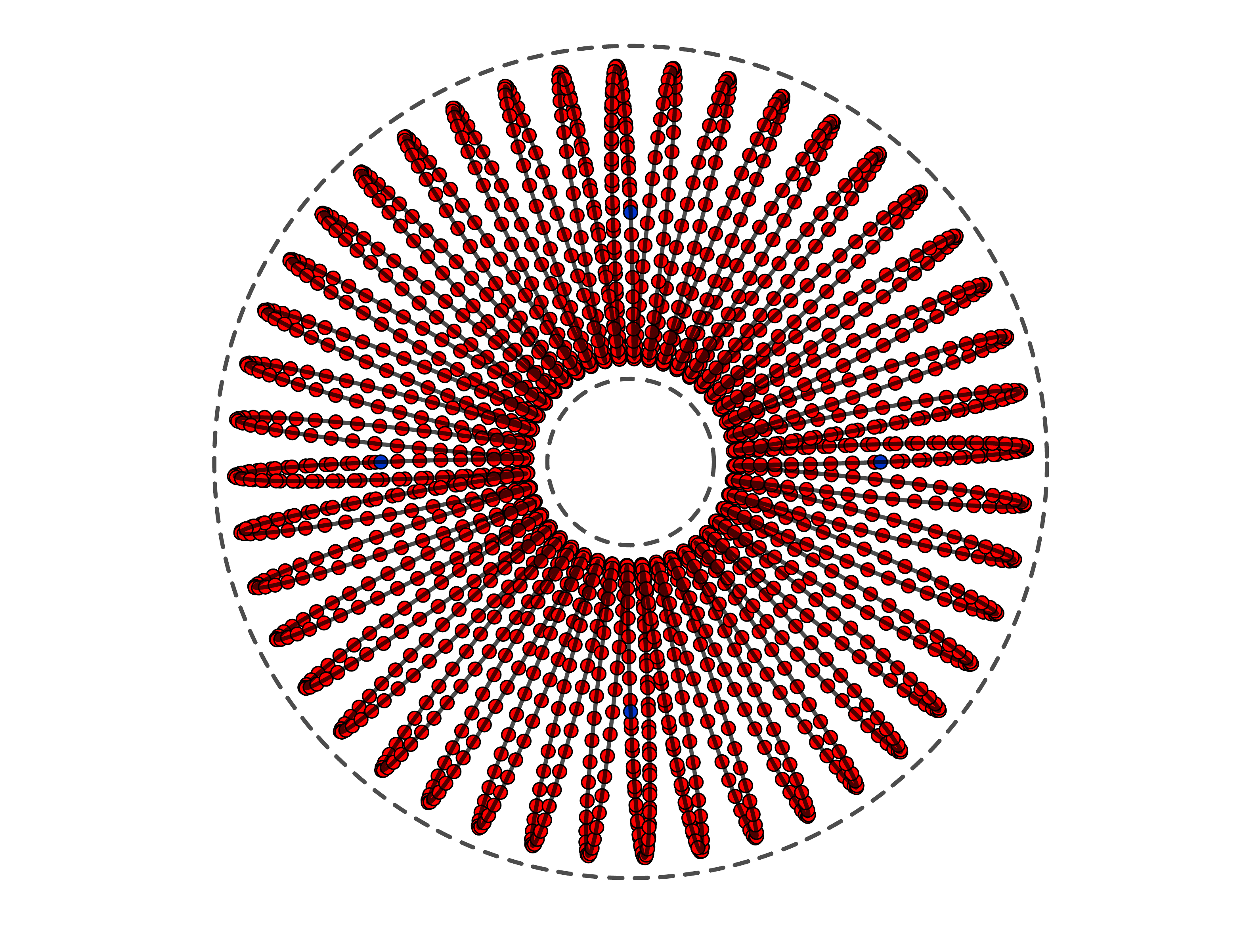}}

\begin{enumerate}
\item One orbit of the circle action $(S_{\alpha})_{\alpha \in \bbS^1}$ with one discrete orbit for $S_{\alpha_{1}}$;

\item The same orbit for $(h_{1} S_{\alpha} h_{1}^{-1})_{\alpha \in \bbS^1}$, the discrete orbit is unchanged;

\item First 100 iterates for $h_{1} S_{\alpha_{2}} h_{1}^{-1}$, with $\alpha_{2}$ very close to $\alpha_{1}$;

\item The discrete orbit is $\varepsilon_{1}$-dense for some small $\varepsilon_{1}$.
\end{enumerate}

\caption{The first two steps in Anosov-Katok method to construct a transitive diffeomorphism (here $\alpha_{1} = \frac{1}{4}$), see also Section~\ref{sec:transitivity} \label{fig.general-scheme}}
\end{figure}

\section{The conjugation method of Anosov and Katok}
Let $(M, \omega)$ be a closed symplectic manifold which admits a smooth Hamiltonian action of the circle, denoted by $(S_{\alpha})_{\alpha \in \bbS^1}$.    We identify the circle $\bbS^1$ with $\bb R / \bbZ$.  We denote the fixed point set of the action by $\Fix(S)$  and we suppose that the action is locally free outside of $\Fix(S)$.  This means that there exists a neighborhood $V$ of $0 \in \bb S^1$ such that for every non-zero $\alpha \in V$ the homeomorphism $S_{\alpha}$ has no fixed points in $M \setminus \Fix(S)$.

\subsection{General scheme}\label{sec:general_scheme}
  We outline here the general scheme of constructing a pseudo-rotation, say $f$, via the conjugation method of Anosov-Katok (see also Figure~\ref{fig.general-scheme}).   The pseudo-rotation $f$ will be obtained as the $C^{\infty}$ limit of a sequence of Hamiltonian diffeomorphisms $(f_{n})$ which are of the form
   
$$f_{n} = H_{n} S_{\alpha_{n+1}} H_{n}^{-1}$$ where $H_{n}$ is a symplectic diffeomorphism of $M$ and $\alpha_{n}=\frac{p_{n}}{q_{n}} \in \bbQ/\bbZ$ (all fractions are implicitly supposed to be irreducible).   We can start with any choice for $H_0$ and $\alpha_1$, e.g.\ $ H_0 = \Id$ and $\alpha_1 =\frac{1}{4}$.   The Hamiltonian diffeomorphism $f_n$ is obtained from $f_{n-1}$ in the following manner:  We construct a symplectic diffeomorphism $h_{n}$ 
which coincides with the identity near $\Fix(S)$ and commutes with $S_{\alpha_{n}}$; let $H_{n} := H_{n-1} \circ h_{n}$.  Observe that 
$$
H_{n} S_{\alpha_{n}} H_{n}^{-1} =  H_{n-1} S_{\alpha_{n}} H_{n-1}^{-1} = f_{n-1}.
$$
Hence, by choosing  $\alpha_{n+1}$ to be sufficiently close to $\alpha_{n}$, we can ensure that $f_{n+1}$, defined by the above formula, is as close as desired to $f_{n}$ (in the $C^{\infty}$ topology). On the one hand, this ensures the $C^{\infty}$ convergence of the sequence $(f_n)$ and on the other hand it will allow us to prove  that the limit map $f$ inherits the approximate dynamical  and ergodic properties of the $f_{n}$'s.  Note that if the convergence is fast enough, the sequence $(\alpha_{n})$ will converge to some irrational number $\alpha$, and the fixed point set of $f$ will coincide with the fixed point set of the initial circle action (see Section~\ref{sec:fast_convergence}).
  We should point out that the map $f$, being a $C^{\infty}$ limit of Hamiltonian diffeomorphisms, is indeed a Hamiltonian diffeomorphism of $M$; this non-trivial fact is a consequence  of the $C^{\infty}$ Flux conjecture which was settled by Ono \cite{Ono_Flux}.\footnote{It can easily be seen that $f$ is isotopic to the identity in $\Symp(M, \omega)$.  Hence, $f$ is automatically Hamiltonian if  $H^1(M)$ is trivial, which is the case for all toric symplectic manifolds. }

\medskip

To ensure that the Hamiltonian diffeomorphism  $h_n$ commutes with $S_{\alpha_n}$, we will carry out the construction of $h_n$ on the quotient of $M\setminus \Fix(S)$ 
by the action of the rotation $S_{\frac{1}{q_n}}$.  To make sure that this quotient is a smooth manifold we must pick $q_n$ such that the action of  $S_{\frac{1}{q_n}}$ on $M \setminus \Fix(S)$ is free which  is not automatically guaranteed because the circle action is assumed to be only locally free on $M\setminus \Fix(S)$.  To overcome this technical difficulty, the numbers $\alpha_n$ will be picked as follows.  Consider the subgroup of $\bbS^1$ generated by the union of the stabilizers of all points in $M \setminus \Fix(S)$. Since the action is locally free on  $M \setminus \Fix(S)$, this subgroup is generated by some $\frac{1}{q_{0}}$.  We denote
\begin{equation}\label{eq:def_cQ}
\cQ = \{q \in \N: q \text { is relatively prime to } q_0 \}.
\end{equation}
This set is closed under multiplication. The set $\cQ$ is significant to our construction because of the following property: for every rational number $\frac{1}{q}$ with $q$ in $\cQ$, the action of $\bb Z / q \bb Z$ generated by $S_{\frac{1}{q}}$ on $M \setminus \Fix(S)$ is free.  Hence, the quotient 
$$\frac{M \setminus \Fix(S)}{ S_{\frac{1}{q}}}$$  is a smooth manifold.  Moreover, it naturally inherits the symplectic structure of $M$.  Lastly, note that the set of rationals $\frac{p}{q}$ with $q \in \cQ$ is dense in $\bb S^1$.

\begin{remark}\label{rem:nondeg}
 We do not know if pseudo-rotations are in general necessarily  non-degenerate.  However, as we now explain, it is possible to ensure that the pseudo-rotation $f$ from Theorem \ref{theo.main} is non-degenerate.  Recall that being non-degenerate means that the derivative of $f$ at any of  its fixed points does not have $1$ as an eigenvalue.
  
  The sequence of rational numbers $\alpha_n$ has a limit which we denote by $\alpha$; as noted before, fast convergence of the sequence guarantees that $\alpha$ is irrational.  The derivative of $f$ at any of its fixed points has the same eigenvalues as the derivative of $S_\alpha$ at the same fixed point.  This is because the diffeomorphisms $H_n$ coincide with the identity in a neighborhood of  $\Fix(S)$.  Hence, the diffeomorphism $f$ is non-degenerate if and only if $S_\alpha$ is non-degenerate.
  
    In the case of the circle action used in the construction of the pseudo-rotations of Theorem \ref{theo.main}, $\alpha$ being irrational guarantees that $S_\alpha$ is indeed non-degenerate.   This fact, which can be verified via the equivariant version of the Darboux theorem (see \cite[Theorem 3.1.2]{Cannas}), is a consequence of the conditions imposed by Equation \eqref{eq:locally_free_action} below.
    
    Let us add that pseudo-rotations of the $2$-sphere are necessarily non-degenerate (this is a well-known consequence of the rotation vectors theory, see~\cite{Franks1, Franks2}).  It would be interesting to know if pseudo-rotations are always non-degenerate in higher dimensions.  
\end{remark}

\subsection{Why is $f$ a pseudo-rotation?}\label{sec:fast_convergence}
   Let $f$ be a Hamiltonian diffeomorphism obtained via the conjugation method as described in the previous section.  As mentioned earlier, our goal is to construct $f$ so that it is a pseudo-rotation (i.e.\ no  periodic points outside of $\Fix(S)$) and it displays complicated dynamical behaviour (transitivity, ergodicity, unique ergodicity).  Being a pseudo-rotation is automatic for the $f$ satisfying the conclusion of Theorem~\ref{theo.main} because every periodic orbit supports an ergodic measure.  
In this section, we will explain how the conjugation method can easily provide a transitive map; this can be considered as a warm-up for the more technical proof of Theorem~\ref{theo.main}.
As we will now  explain, if the convergence of the sequence $(f_n)$ is fast enough, which can be arranged by choosing the $\alpha_n$'s appropriately, then the limit map $f$ will also be a pseudo-rotation.   The general idea, which will be useful in many places, e.g.\  to get transitivity, is that \emph{an open property which holds at some step $n$ is automatically transmitted to the limit map ``by fast convergence''}.

%


Here are the details. Prior to the construction, we fix an increasing sequence $(K_{n})$  of compact subsets of $M \setminus \Fix(S)$ whose union is $M \setminus \Fix(S)$.   At the beginning of step $n$ of the construction, we define the set 
$ U(K_{n}, q_{n}) \subset \Ham(M, \omega)$ consisting of all Hamiltonian diffeomorphisms which have no periodic points of period less than $q_n$  inside $K_{n}$.  (Recall that $q_n$ is determined in step $n-1$.)  This set contains $f_{n-1}$ and is open in the $C^0$-topology.  We choose a $C^{\infty}$-neighborhood $V_{n}$ of $f_{n-1}$ whose $C^{\infty}$-closure is contained in $U(K_n, q_{n})$.  At each subsequent step $i \geq n$ we simply choose the number $\alpha_{i}$ close enough to $\alpha_{i-1}$ so that the Hamiltonian diffeomorphism $f_i$ belongs to $V_{n} $.  This implies that the limit map $f$ belongs to $U(K_{n}, q_{n})$, that is, $f$ will have no periodic points of period less than $q_n$  inside $K_{n}$.  The map $f$ will have this property for every $n$, and the numbers $q_n$ will be picked such that $q_n \to \infty$, which ensures that it will have no periodic points in $M\setminus \Fix(S)$.


\subsection{Transitivity}\label{sec:transitivity}
In this section we explain how to construct a transitive pseudo-rotation with the previous general scheme.  This construction is illustrated by Figure~\ref{fig.general-scheme}.
Every Anosov-Katok construction in Hamiltonian dynamics requires  certain information on abundance of symplectic diffeomorphisms. For transitivity the required information is very light: we need the fact that the symplectic group acts transitively on $p$-tuples of distinct points, as expressed by Lemma \ref{lemma.transitivity}  below.

We say that a subset $Z$ of $M$ is \emph{$\varepsilon$-dense} in $M$ if the open balls of radius $\varepsilon$ around each point of $Z$ cover $M$. Note that when $\varepsilon$-density holds for $Z$ it also holds for some finite subset $Z'$ of $Z$, by compactness, and for any subset $Z''$ close enough to $Z'$.

Let $(\varepsilon_{n})$ be any fixed sequence of positive numbers that converges to $0$.
Assume inductively that $\alpha_{n}, H_{n-1}$ have been constructed as above, with 
$f_{n-1} = H_{n-1} S_{\alpha_{n}} H_{n-1}^{-1}$ which is \emph{$\varepsilon_{n-1}$-transitive}: there exists some point whose orbit is $\varepsilon_{n-1}$-dense, and more precisely
there exists some $x_{n-1} \in M$ and $N_{n-1}>0$ such that the balls
$$
B_{\varepsilon_{n-1}}(x_{n-1}), \dots , B_{\varepsilon_{n-1}}(f_{n-1}^{N_{n-1}}(x_{n-1})),
$$ 
cover $M$. We now explain how to construct $h_{n}, \alpha_{n+1}$ so that $f_{n}$ is $\varepsilon_{n}$-transitive and arbitrarily close to $f_{n-1}$ (see also Figure~\ref{fig.general-scheme}).

Let $U=M \setminus \mathrm{Fix}(S)$, and consider the quotient map
$$
\pi: U \to M' = U/S_{\alpha_{n}}.
$$
The space $M'$ is a smooth manifold, on which $\omega$ induces a symplectic structure. Symplectic diffeomorphisms of $M'$ corresponds to symplectic diffeomorphisms of $U$ that commutes with $S_{\alpha_{n}}$. The Hamiltonian circle action $S$ induces a Hamiltonian circle action on $M'$.
First choose some finite subset $F_{2}$ of $M'$ such that $\pi^{-1}(F_{2})$ is $\eta_{n}$-dense in $M$, where $\eta_{n}$ is provided by the continuity of $H_{n-1}$: 
any two points that are $\eta_{n}$-close in $M$ have their images $\varepsilon_{n}$-close.
Then choose some finite set $F_{1}$ of $M'$ included in a single orbit of the circle action, and which has the same cardinality as $F_{2}$. By the next lemma, there is a compactly supported symplectic diffeomorphism of $M'$ that sends $F_{1}$ to $F_{2}$. We lift this diffeomorphism to a symplectic diffeomorphism $h_{n}$ of $M$ which is the identity near $\mathrm{Fix}(S)$ and  sends $\pi^{-1}(F_{1})$ to $\pi^{-1}(F_{2})$.

\begin{lemma}\label{lemma.transitivity}
Given a symplectic manifold $M$, and an integer $p>0$, the group of (compactly supported) symplectic diffeomorphisms of $M$ acts transitively on $p$-tuples of distinct points:
for every $(x_{1}, \dots, x_{p}), (y_{1}, \dots, y_{p}) \in M^p$ with $x_{i} \neq x_{j}, y_{i} \neq y_{j}$ for every $i \neq j$, there is some $\Phi \in \Symp_{0}(M)$ such that $\Phi(x_{i})=y_{i}$ for every $i=1, \dots, p$.
\end{lemma}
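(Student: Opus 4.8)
The plan is to prove the statement by a connectedness/path-lifting argument: reduce transitivity of the $\Symp_0(M)$-action on ordered $p$-tuples of distinct points to the fact that the \emph{configuration space} of $p$ distinct points in $M$ is connected, and then realize any path in configuration space by an ambient symplectic isotopy. Concretely, write $\mathrm{Conf}_p(M) = \{(z_1,\dots,z_p)\in M^p : z_i\neq z_j \text{ for } i\neq j\}$. Since $M$ is a connected manifold of dimension $\geq 2$ (the case $\dim M = 0$ being trivial and $\dim M = 1$ not occurring for symplectic manifolds), $\mathrm{Conf}_p(M)$ is path-connected: given two configurations one can move the points one at a time along paths that avoid the finitely many other points, using that the complement of finitely many points in a connected manifold of dimension $\geq 2$ is still path-connected. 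So it suffices to show that if $(x_1,\dots,x_p)$ and $(y_1,\dots,y_p)$ lie in the same path-component of $\mathrm{Conf}_p(M)$, then some $\Phi\in\Symp_0(M)$ (compactly supported) carries the first tuple to the second.

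The key step is a local-to-global move: if $(z_1(t),\dots,z_p(t))$, $t\in[0,1]$, is a smooth path in $\mathrm{Conf}_p(M)$, I would cover it by finitely many subintervals on each of which the points $z_i(t)$ stay inside pairwise disjoint Darboux balls, and on each such ball use the standard fact that a compactly supported symplectic (indeed Hamiltonian) isotopy of a Darboux ball can move its center along any prescribed short path. More precisely, for each $i$ there is a time-dependent Hamiltonian supported in a small disjoint neighborhood $U_i$ of the trajectory segment whose flow sends $z_i(0)$ to $z_i(1)$ on that subinterval; taking the sum of these (disjointly supported) Hamiltonians gives a compactly supported Hamiltonian isotopy of $M$ moving the whole tuple as desired. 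Composing the time-one maps over the finitely many subintervals yields a single $\Phi\in\Symp_0(M)$, in fact in $\Ham(M,\omega)$ if one wishes, with $\Phi(x_i) = y_i$ for all $i$. (An even softer route avoiding Hamiltonians: use that $\Symp_0(M)$ acts on $\mathrm{Conf}_p(M)$ with open orbits by the local Darboux move above, hence with a single orbit on each connected component, since open orbits of a group action partition the space.)

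The main obstacle — really the only subtlety — is the disjointness bookkeeping: when moving $z_i(t)$ along its segment one must keep the support of the $i$-th Hamiltonian away from the (also moving) points $z_j(t)$, $j\neq i$, and away from the other supports. This is handled by choosing the subdivision of $[0,1]$ fine enough that on each subinterval all $p$ trajectory segments have pairwise disjoint tubular neighborhoods (possible by compactness and the fact that the $z_i$ remain distinct, so the closed segments are pairwise disjoint compact sets), and then shrinking the Hamiltonian supports into these neighborhoods. Everything else is routine: path-connectedness of $\mathrm{Conf}_p(M)$ for connected $M$ with $\dim M\geq 2$, and the elementary construction of a compactly supported Hamiltonian flow on a Darboux chart translating a point along a given path. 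I would present the open-orbit version as the clean argument and remark that it simultaneously yields the stronger statement that $\Ham(M,\omega)$ already acts transitively on $\mathrm{Conf}_p(M)$.
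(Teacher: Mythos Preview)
Your argument is correct and is the standard proof of this well-known fact. Note, however, that the paper does not actually supply a proof of this lemma: it is stated in Section~\ref{sec:transitivity} and used immediately, being treated as elementary background. So there is nothing to compare against; your write-up would serve perfectly well as the missing proof, and your remark that one in fact obtains $\Phi\in\Ham(M,\omega)$ is a worthwhile addition. One small point: the lemma as stated does not explicitly assume $M$ connected, but you are right that this is implicit (and is the standing hypothesis throughout the paper).
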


Now denote by $C$ the orbit of the circle action $S$ which contains $\pi^{-1}(F_{1})$. Note that $h_{n}(C)$ is $\eta_{n}$-dense in $M$, and thus $H_{n-1}(h_{n}(C))$ is $\varepsilon_{n}$-dense in $M$. Choose $\alpha_{n+1}$ to be  a rational number very close to $\alpha_{n}$, so that there is some point in $C$ whose (discrete) orbit $C'$ under $S_{\alpha_{n+1}}$ is $\eta_n$-dense in $C$.  Then, $H_{n-1}(h_{n}(C'))$ is still $\varepsilon_{n}$-dense in $M$. Let $H_{n} = H_{n-1} \circ h_{n}$ and $f_{n} = H_{n} S_{\alpha_{n+1}} H_{n}^{-1} $ as in the general scheme, then $H_{n}(C')$ is an orbit of $f_{n}$ which is $\varepsilon_{n}$-dense, as wanted.
(Of course, we also make sure that $f_{n}$ satisfies the constraints ensuring that the limit map $f$ is a pseudo-rotation, by taking $\alpha_{n+1}$ even closer to $\alpha_{n}$ if needed, as described in the previous section.)

It remains to check  that, provided the convergence is fast enough, the limit map $f$ will have a dense orbit. At step $n$ the map $f_{n}$ has an $\varepsilon_{n}$ dense orbit. The set of maps having an $\varepsilon_{n}$-dense orbit is open in the $C^0$-topology. Thus this property will be transmitted to the limit map $f$ by fast convergence. It is  a classical Baire category argument that a map that has $\varepsilon$-dense orbits for arbitrarily small $\varepsilon$'s has a dense orbit. Thus, $f$ is transitive.
   
\subsection{Pseudo-rotations with minimal number of ergodic measures}\label{sec:proof_main_thm}

In this section, we prove  Theorem \ref{theo.main} relying on  Proposition \ref{prop.main}  below whose proof takes up the remainder of the paper.
Our standing assumption, while proving Theorem \ref{theo.main} and Proposition \ref{prop.main}, is that $(M, \omega)$ is a toric symplectic manifold and that  the locally free circle action $S$ is compatible with the torus action in the sense that it is obtained by composing the torus action with a group morphism from the circle to the $n$-torus.  We explain why $S$ as described here exists in Section \ref{sec:prelim_symp}; see the discussion around Equation \eqref{eq:locally_free_action}.


   Denote by $\cP(M)$ the space of Borel probability measures on $M$.  This space is endowed with the weak topology: a sequence $(\mu_n)$ in $\cP(M)$ converges to $\mu$ if and only if the sequence $(\int f d\mu_n)$ converges to $\int f d\mu$ for all continuous functions $f$.  Recall that if $\mu$ is a Borel probability measure on a topological space $N$ and $h : N \rightarrow M$ is a continuous mapping, then the pushforward of  $\mu$ by $h$ is  defined by the formula 
   $$
h_{*} \mu(E) := \mu(h^{-1}(E)).
$$ 
Let $\cE \subset \cP(M)$ be the set consisting of the volume measure and the $\ell$ Dirac measures supported at the fixed points of the circle action. We denote by $\Conv(\cE)$ the convex hull of $\cE$ in $\cP(M)$. 
Lastly, recall  \eqref{eq:def_cQ},  the definition of the set $\cQ \subset \N$ of permitted denominators for the $\alpha_{n}$s.

\begin{prop}
\label{prop.main}
Let $q \in \cQ$ be a positive integer and $\cU \subset \cP(M)$ an open neighborhood of $\Conv(\cE)$.  There exists $h \in \Symp_0(M, \omega)$ with the following properties:
\begin{enumerate}
\item $h$ coincides with the identity near $\Fix(S)$, 
\item $hS_{\frac{1}{q}} = S_{\frac{1}{q}}h$,
\item For every $x \in M$, the push-forward of the Lebesgue measure on the circle by the map $t \mapsto h S_{t} h^{-1}(x)$ belongs to $\cU$.
\end{enumerate} 
\end{prop}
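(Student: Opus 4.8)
The plan is to work on the quotient orbifold/manifold $M' = (M \setminus \Fix(S))/S_{\frac{1}{q}}$, exactly as in the transitivity argument, so that any symplectic diffeomorphism of $M'$ lifts to a symplectic diffeomorphism of $M$ commuting with $S_{\frac{1}{q}}$ (hence with $S_{\alpha_n}$), and which can be arranged to equal the identity near $\Fix(S)$. This automatically gives properties (1) and (2); the whole content is property (3), a measure-theoretic statement saying that for \emph{every} point $x$, the orbit $t \mapsto hS_t h^{-1}(x)$ equidistributes, in the weak topology, to within $\cU$ of the convex hull $\Conv(\cE)$ of the volume measure and the Dirac masses at the fixed points. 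Since pushing forward by $h$ is continuous on $\cP(M)$, condition (3) is really a condition on the family of circle-orbit measures of $S_t$ pushed forward by $h^{-1}$, i.e.\ on how the sets $h^{-1}(\text{orbit of }x)$ distribute in $M$: I would reformulate (3) as asking that for every $x$, the push-forward under $S_t$ of Lebesgue measure of the point $h^{-1}(x)$ be $\cU'$-close to $\Conv(\cE)$, where $\cU'$ is the $h^{-1}$-preimage neighborhood — but since $h$ is the object being constructed this is circular, so instead the cleaner route is: construct $h$ so that for \emph{every} $S_t$-orbit $\gamma$ in $M$ (a single circle), the uniform measure on $h(\gamma)$ lies in $\cU$. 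Because the circle orbits of $S_t$ foliate $M\setminus\Fix(S)$ and are parametrized by $M'$, this is a statement to be proved point-by-point over $M'$.

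The key geometric input is the ``nearly global'' action-angle coordinates on the toric manifold $(M,\omega)$ advertised in the introduction: away from a small neighborhood of $\Fix(S)$, $M$ looks like (an open dense subset of) $\Delta \times \bbT^n$ with $\omega = \sum dx_i \wedge d\theta_i$, where $\Delta$ is the moment polytope, and the circle action $S$ is a linear rotation in the $\theta$ variables along an irrational-direction line; its orbits are the lines $\{x\}\times (\text{line of slope } v)$ closing up because $\alpha_n$ is rational, so each orbit is a $(1/q_n)$-spaced set of points on a rationally-sloped line in the torus fiber over a fixed $x\in\Delta$. The idea, following Katok's Basic Lemma, is to pick a large integer parameter and design $h$, fiberwise over $M'$, as a shear/permutation-type symplectomorphism of $\Delta\times\bbT^n$ that takes each such finite orbit and ``smears'' its $h$-image over the whole of $M$ in an almost-equidistributed way: for most of the orbit points, $h$ should move them to fill out $M$ uniformly (contributing the volume measure), while for a controlled small fraction of the parameter $t$ the image should sit near the fixed points (contributing the Dirac parts). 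This is where the convex hull $\Conv(\cE)$ rather than just the volume measure appears: one cannot avoid that a short sub-arc of each orbit, corresponding to the part of the circle where the conjugating map is tapered to the identity near $\Fix(S)$, gets mapped into a neighborhood of the fixed point set, so the limiting measure is a convex combination of the volume and the Diracs. Making the ``most of the orbit equidistributes'' part rigorous uses a Fubini/Katok-lemma construction: subdivide $M'$ and the torus fibers into a fine grid of symplectic boxes, and use Lemma \ref{lemma.transitivity}-type transitivity on $M'$ together with volume-preserving rearrangements of the boxes to build $h$ box-by-box so that the image of each grid cell is spread across all cells.

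The steps, in order, would be: (i) set up $M'$ and the lifting correspondence, and reduce (3) to a uniform statement over the compact pieces $K_n$ exhausting $M\setminus\Fix(S)$ together with a tapering estimate near $\Fix(S)$; (ii) import the action-angle normal form for the toric structure and identify the $S_t$-orbits as rationally-sloped lines in torus fibers; (iii) fix, for the given open $\cU$, a finite weak-topology datum — finitely many test functions $f_1,\dots,f_m$ and an $\eps>0$ — so that it suffices to make $|\int f_j\, d(h_*\mu_x) - \int f_j\, d\nu_x| < \eps$ for suitable $\nu_x \in \Conv(\cE)$; (iv) partition $M$ into finitely many symplectic boxes of small volume, choose $q$ large (or rather exploit that the given $q$ is large enough, or first pass to a multiple in $\cQ$ and absorb it — this needs care since $q$ is prescribed, so one instead chooses the number of ``wraps'' of the orbit, governed by how close $\alpha_{n+1}$ is to $\alpha_n$, but here $q$ is fixed, so the correct mechanism is that each orbit already has $q$ points and we choose $h$ to distribute those $q$ images across the boxes proportionally to volume, with error $O(1/q)$ plus a boundary term $O(\text{box size})$); (v) construct $h$ on each torus fiber as a symplectomorphism realizing this distribution, using compactly-supported symplectic diffeomorphisms and Katok's rearrangement lemma, and check it depends smoothly on the base point and can be lifted and tapered to the identity near $\Fix(S)$; (vi) verify the uniform bound on $\int f_j\, d(h_*\mu_x)$ for every $x$, including $x$ near $\Fix(S)$ where the Dirac part dominates and the estimate is essentially trivial.

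The main obstacle — and the reason Proposition \ref{prop.main} is ``the technical heart of the paper'' — is step (v)–(vi): one needs the equidistribution estimate to hold for \emph{every single orbit}, uniformly, not merely almost every orbit, so there is no room for a ``bad set'' of orbits, and in particular the fiberwise construction must be genuinely uniform as the base point $x\in M'$ ranges over a non-compact manifold and degenerates toward the facets of the moment polytope (where the torus fiber collapses and the action-angle chart breaks down). Handling the collapsing fibers near lower-dimensional strata of the toric manifold, while keeping $h$ smooth, symplectic, equal to the identity near $\Fix(S)$, and commuting with $S_{\frac1q}$, and simultaneously getting the uniform-over-all-$x$ equidistribution with the correct convex-combination target measure, is the crux. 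I expect this forces an inductive construction over the strata of the toric manifold, or a careful choice of the exhaustion $K_n$ together with a quantitative version of Katok's Basic Lemma on each $K_n$, with the near-$\Fix(S)$ region contributing precisely the Dirac-measure part of $\Conv(\cE)$.
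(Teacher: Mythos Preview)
Your proposal has the right general shape (work on the quotient $M'$, use action-angle coordinates, subdivide into boxes, target a weak-topology neighborhood via finitely many test functions) but it misses the actual mechanism the paper uses and contains a genuine confusion that would prevent the argument from going through.

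First, a confusion about orbits. You write that ``each orbit already has $q$ points and we choose $h$ to distribute those $q$ images across the boxes.'' But the orbits in property~(3) are the \emph{continuous} circle orbits $\{S_t(y): t\in\bbS^1\}$, not finite sets; the integer $q$ only enters as the commutation constraint $hS_{1/q}=S_{1/q}h$. There is no parameter here playing the role of ``how close $\alpha_{n+1}$ is to $\alpha_n$'' --- that belongs to the outer Anosov--Katok loop, not to this proposition. So the error term $O(1/q)$ you invoke has no meaning here, and the ``distribute $q$ points'' picture is not what is being asked.

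Second, and more seriously, you propose to use Katok's Basic Lemma and a fiberwise shear/rearrangement. The paper's introduction explicitly says this is the approach they could \emph{not} make work for controlling every orbit (it gives control on almost every orbit, which suffices for ergodicity but not for a finite number of ergodic measures). The paper's actual mechanism is different and has two ingredients you do not mention:
\begin{itemize}
\item A \emph{transversality perturbation}: one conjugates the circle action by a $C^\infty$-small symplectomorphism $\Psi$ so that every orbit of $\Psi S \Psi^{-1}$ meets the boundaries of a fixed family of polydiscs only in isolated points (Lemma~\ref{lemma.transversality}), and hence, by a thickening argument (Lemma~\ref{lemma.thickening}), every orbit spends time $>1-\varepsilon'$ in the interiors of those polydiscs. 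This is what upgrades ``almost every orbit'' to ``every orbit.''
\item A \emph{three-level box hierarchy}: large equidistribution boxes $A_i$ (from the moment-polytope decomposition), intermediate transportation boxes $B_j$, and small boxes $c_k$. The small boxes are small enough to be freely swapped by symplectomorphisms supported in the complement of the $B_j'$'s (avoiding camel obstructions), and the final $h$ is $\Theta\circ\Psi$ where $\Theta$ permutes the $c_k$'s so that each $\mathcal O_{Nq}$-orbit of small boxes is evenly spread across the $A_i$'s.
\end{itemize}
Your ``inductive construction over strata'' and ``fiberwise symplectomorphism depending smoothly on the base'' are not needed and would face exactly the difficulty you anticipate near collapsing fibers; the paper sidesteps this entirely because the perturbation $\Psi$ and the box permutation $\Theta$ are globally defined on $M$ (built on the quotient by $S_{1/Nq}$), not fiber by fiber, and the uniformity over all $x$ comes from the transversality/thickening step rather than from any smooth dependence on a base parameter.
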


We will now show that the above proposition implies the existence of a pseudo-rotation whose set of invariant ergodic measures is exactly $\cE$.

\begin{proof}[Proof of Theorem \ref{theo.main}]
Let $\mathcal{U}_n$ be a sequence of open neighborhoods of $\Conv(\cE)$, such that  
$$\bigcap \mathcal{U}_n = \Conv(\cE).$$

As explained above, we start step $n$ with the rational number  $\alpha_{n} = \frac{p_{n}}{q_{n}}$ and  the maps 
$$
H_{n-1}, \ \ f_{n-1} = H_{n-1} S_{\alpha_{n}} H_{n-1}^{-1}.
$$
Since $H_{n-1}$ is symplectic and fixes $\Fix(S)$, its action $(H_{n-1})_{*}$ on $\cP (M)$ fixes every element of $\Conv(\cE)$. Thus,  $(H_{n-1})^{-1}_{*} (\cU_{n})$ is an open neighborhood of $\Conv(\cE)$. 

We denote the Lebesgue measure on the circle by  $\mathrm{Leb}_{\bbS^1}$.
Applying Proposition~\ref{prop.main} to the integer $q_n$ and the set $(H_{n-1})_{*}^{-1} (\cU_{n})$,
 we obtain $h_{n}\in \Symp(M, \omega)$ such  that  $S_{\alpha_{n}} h_n = h_n S_{\alpha_{n}} $  
and 
\begin{equation}\label{eq:push_forward}
 \left( t \mapsto h_{n} S_{t} h_{n}^{-1} x \right)_{*} \mathrm{Leb}_{\bbS^1} \in (H_{n-1})_{*}^{-1} (\cU_{n}) , \; \forall x \in M.
\end{equation}
Equation~\eqref{eq:push_forward} may equivalently be restated as 
$$
(H_{n-1} h_{n})_{*} (t \mapsto S_{t}y)_{*} \mathrm{Leb}_{\bbS^1} \in  \cU_{n}, \; \forall y \in M.
$$
We let $H_{n} = H_{n-1} h_{n}$.
Consider the map
$$
\begin{array}{rcl}
\Theta  : \cP(\bbS^1) \times M & \longrightarrow & \cP(M) \\
(\mu,x) & \longmapsto & (H_{n})_{*} (t \mapsto S_{t}x)_{*} \mu.
\end{array}
$$

This mapping is continuous, and since  $ \cP(\bbS^1)$ and $M$ are both compact, it is uniformly continuous.  Now by Equation~\eqref{eq:push_forward},  $\Theta(\Leb_{\bbS^1},x) \in \cU_{n}$ for every $x \in M$. Thus,  there exists a neighborhood $\mathcal{V}$ of $\Leb_{\bbS^1}$ such that  $\Theta(\mathcal{V} \times M) \subset \cU_{n}$.

Let $\alpha_{n+1} = \frac{p_{n+1}}{q_{n+1}}$ be a rational written in irreducible form, and denote by  $\mu_{n} \in \cP(\bb S^1)$ the measure given by the average of the Dirac measures on the orbit of $0 \in \bb S^1$ under the circle rotation by $\alpha_{n+1}$. If $q_{n+1}$ is large enough then the measure $\mu_{n}$
 will be in $\mathcal{V}$ (by convergence of the Riemann sums). Hence we get $\Theta(\mu_n, x) \in \cU_n$ for all $x \in M$.   Note that $\Theta(\mu_n, x)$ is the average of the Dirac measures along the orbit of $H_{n} (x)$ under the map $f_{n} = H_{n} S_{\alpha_{n+1}} H_{n}^{-1}$.  Lastly, we additionaly impose that $q_{n+1}   \in \cQ$.

As explained in Section \ref{sec:general_scheme}, this construction provides a sequence of Hamiltonian diffeomorphisms $f_n=  H_n  S_{\alpha_{n+1}} H_n^{-1}$ that converges, in $C^{\infty}$ topology, to a Hamiltonian diffeomorphism $f$. We will now prove that, by appropriately choosing the sequence $(\alpha_n) =(\frac{p_n}{q_n})$, we can ensure that the limit map $f$ has the desired property:  its set of invariant ergodic measures is exactly $\cE$.

 At the beginning of step $n$ of the construction, we define the set 
$ U(q_{n}) \subset \Ham(M, \omega)$ consisting of all Hamiltonian diffeomorphisms $g$ which have the following property:  for every $x \in M$, the probability measure $
 \frac{1}{q_{n}} \sum_{k=0}^{q_{n}-1} \delta_{g^kx}$ belongs to the set  $\cU_{n-1}.$     This set contains $f_{n-1}$ and is open in the $C^0$-topology.   The numbers $\alpha_{i+1}$, for $i \geq n$, will be chosen such that the Hamiltonian diffeomorphisms $f_i$,  for $i\geq n$, are all contained in a ($C^{\infty}$) neighborhood of $f_n$ whose ($C^{\infty}$) closure is contained in $U( q_{n})$.   This implies that the limit map $f$ will also be contained in $U( q_{n})$.  Of course, the $\alpha_n$'s may be picked such that the map $f$ will have this property for every $n$.  In other words, for every $n$ and every $x \in M$, the probability measure 
 $$
\nu_n := \frac{1}{q_{n}} \sum_{k=0}^{q_{n}-1} \delta_{f^kx}
$$
is contained in $\cU_n$.

We claim  this implies that the set of invariant  ergodic probability measures of $f$ is exactly $\cE$. Indeed, 
to obtain a contradiction, assume that this is not the case:  $f$ has an invariant ergodic probability measure $\mu$ which is not in $\cE$.  
Since the ergodic measures are extremal points in the set of invariant probability measures, the measure $\mu$ does not belong to $\Conv(\cE)$.
This entails the existence of a continuous function $\varphi: M \to \bbR$ which vanishes on $\Fix(S)$ and has the property that $\int \varphi \; d\Vol=0$ but $\int \varphi d\mu \neq 0$.  To see that such $\varphi$ indeed exists, observe that a probability measure $\gamma$ belongs to $\Conv(\cE)$ if and only if it satisfies the following criterion: for every pair of open sets $O,O'$ which are disjoint from $\Fix(S)$ and have the same volume, we must have $\gamma(O)=\gamma(O')$.

By Birkhoff's Ergodic Theorem and the ergodicity of $\mu$, there exists $x \in M$ such that the  Birkhoff means of $\varphi$ converge to $\int \varphi d\mu$.  This, in particular, means that 
the sequence 

$$\int \varphi \; d\nu_n = \frac{1}{q_{n}} \sum_{k=0}^{q_{n}-1} \varphi(f^kx)$$
converges to the non-zero number  $\int \varphi d\mu$.  The contradiction is that, upto passing to a subsequence,  the sequence $\int \varphi d\nu_n$  converges to zero because  $\nu_n \in \cU_n$ and thus must have a weak limit $\nu \in \Conv(\cE)$ and $\varphi$ was picked such that $\int \varphi \; d\nu =0$ for all $\nu \in \Conv(\cE)$.
\end{proof}

\section{Preliminaries}\label{sec:prelim}

The goal of this section is to recall some basic notions of symplectic \& differential geometry as well as proving certain preliminary results which will be used in the proof of Proposition \ref{prop.main}.   

\subsection{Preliminaries on symplectic geometry} \label{sec:prelim_symp}
  Throughout the section $(M, \omega)$  denotes a symplectic manifold.  Recall that a symplectomorphism is a diffeomorphism $\varphi: M \to M$ such that $\varphi^* \omega = \omega$.   The set of all symplectic diffeomorphisms of $M$ is denoted by $\Symp(M, \omega)$.  We will let $\Symp_0(M, \omega)$ denote those elements of  $\Symp(M, \omega)$ which are isotopic to the identity via a compactly supported isotopy. Note that the assumption on compactness of the support of the isotopy is not common.

  Hamiltonian diffeomorphisms constitute an important class of examples of symplectic diffeomorphisms.  These are defined as follows. A smooth, compactly supported, Hamiltonian $H \in C_c^{\infty} ([0,1] \times M)$  gives rise to a time-dependent vector field $X_H$ which is defined via the equation: $\omega(X_H(t), \cdot) = -dH_t$.  The Hamiltonian flow of $H$, denoted by  $\phi^t_H$, is by definition the flow of $X_H$.  A  compactly supported Hamiltonian diffeomorphism is a diffeomorphism which arises as the time-one map of a Hamiltonian flow  generated by a compactly supported Hamiltonian.  The set of all compactly supported Hamiltonian diffeomorphisms is denoted by $\Ham(M, \omega)$.

\medskip 

\noindent {\bf Toric symplectic manifolds.} 
Recall that a  {\bf toric} symplectic manifold is  a closed and connected symplectic manifold $(M^{2n}, \omega)$  equipped with an effective Hamiltonian action of a torus $\bbT^n = \bbS^1 \times \ldots \times  \bbS^1$ whose dimension is half that of $M$, \emph{i.e.} a one-to-one smooth morphism from $\bbT^n$ to $\Ham(M, \omega)$.  Throughout the paper, we identify $\bbS^1$ with $\bbR/\bbZ$ and  $\bb T^n$ with $\bb R^n / \bb Z^n$.

Each of the circle factors in the torus $\bbT^n$ yields a Hamiltonian circle action, or equivalently a periodic Hamiltonian flow with period 1; we will denote by $\mu_1, \ldots, \mu_n \in C^{\infty}(M)$ the Hamiltonians  corresponding to these circle actions.  The {\bf moment map} $\mu : M \rightarrow \bbR^n$ is defined by $x\mapsto(\mu_1(x), \ldots, \mu_n(x))$.  Its image $\Delta := \mu(M)$ is referred to as the {\bf moment polytope} of the action.  According to the convexity theorem of Atiyah and Guillemin-Sternberg  \cite{Atiyah, guillemin-sternberg} $\Delta$ is a convex polytope whose vertices are the images of the fixed points of the action.  Furthermore, $\Delta$  is simple (there are $n$ edges meeting at each vertex), rational (the
edges meeting at a vertex $v$ are of the form $v + t u_i$ where $t\geq 0$ and $u_i \in \bbZ^n$),
and smooth (at each vertex $v$ of $\Delta$ the corresponding $u_1, \ldots, u_n$ may be chosen to be a $\bbZ$--basis of $\bbZ^n$).  

\medskip 

The points of $M$ with non trivial stabiliser are exactly the points which are mapped under $\mu$ to the boundary of $\Delta$. Furthermore, all the  points in the inverse image of a given face of $\Delta$ have the same stabiliser. Thus, we see that there are finitely many stabilisers, each of which is a subgroup of $\bbT^n$ isomorphic to $\bbT^p$, where $p$ is the codimension of the corresponding face of $\Delta$. In particular, we may pick a one-to-one morphism 
\begin{align}\label{eq:locally_free_action}
\begin{split}
\Phi : \bbS^1 \to \bbT^n  \\
t \; \mapsto  tZ,
\end{split}
\end{align}
for some primitive element $Z \in \bbZ^n$, such that the image of $\Phi$ is not included in the union of the stabilisers. Then, the composition of $\Phi$ with the torus action yields a circle action $S$ which is locally free on $M \setminus \Fix(S)$. Such a circle action will be at the basis of our Anosov-Katok construction.

\medskip

It turns out that one can construct a section of the moment map, that is, a continuous map $\sigma : \Delta \rightarrow M$ such that $\mu \circ \sigma = \Id$.  
The fact that the section $\sigma$ exists can be deduced from the proof of Delzant's theorem on classification of toric symplectic manifolds; see for example the construction of the Delzant space $X_{\Delta}$, pages 9 -- 13 in \cite{Guillemin}: in the case of the model $X_{\Delta}$, one can see directly from the construction  that the moment map admits a section;  existence of the section $\sigma$ for arbitrary toric symplectic manifolds then follows from Delzant's theorem that such manifolds are classified by their moment polytopes. Furthermore, the section $\sigma$ is smooth on  $\Int(\Delta)$.

 Given such a section $\sigma : \Delta \rightarrow M$ of the moment map, define the mapping  $\Xi : \Delta \times \bbT^n  \rightarrow M$  by
 \begin{equation}\label{eq:section_moment_polytope}
 (s,t) \mapsto \bbT^n_{t} (\sigma(s) ),
 \end{equation}
 where $\bbT^n_{t} ( \sigma(s) )$ denotes the image of the point $\sigma(s)$ under the action of $t \in \bbT^n$. 
 Endow  $\Int(\Delta) \times \bb T^n$ with the symplectic form 
$$
\frac{1}{2\pi} \Sigma_{i=1}^n d \mu_{i} \wedge d \theta_{i}.
$$
 Then, the mapping $\Xi$ is a symplectomorphism between $\Int(\Delta) \times \bb T^n$ and $\mu^{-1}(\Int(\Delta))$.   This yields a global system of symplectic action-angle coordinates on $\mu^{-1}(\Delta)$ (this is the content of  Remark IV.4.19 on global action-angle coordinates in~\cite{Audin}). Note however that $\Xi$ is not one-to-one on $\Delta \times \bbT^n$, since points on $\mu^{-1}(\partial \Delta)$ have non-trivial stabilisers.
 One can also describe the action in the neighborhood of any degenerate orbit. In particular, there is a local normal form near the orbit of any point $x$ that depends only on the dimension of the face of the moment polytope the interior of which $x$ belongs. In the sequel we will just use the fact that for every face $F$, $\mu^{-1}(F)$ is a submanifold of $M$ (see for example~\cite{Audin}, Proposition IV.4.16). For further details on toric symplectic manifolds, we refer the reader to the books \cite{Audin, Guillemin}. 
\subsection{Preliminary lemmas}\label{sec:prelim_lemmas}
We will be using the following lemmas in the course of proving Proposition~\ref{prop.main}.  In many cases, the lemmas of this section will be applied to a quotient symplectic manifold of the form  $M' = (M \setminus \Fix(S) ) / S_{\frac{1}{q}}$ where $q \in \cQ$; recall that $M'$ is indeed a manifold because picking $q\in \cQ$ guarantees that the action of $S_{\frac{1}{q}}$ on $M\setminus \Fix(S)$ is free.

\medskip

We will be using the following notations throughout the remainder of the paper. 
As before, we 
consider a circle action $S : (x,t) \mapsto S_{t}(x)$ on a symplectic manifold $M$, where $t \in \bbS^1$ and $x \in M$ and we denote the Lebesgue probability measure on the circle by Let $\mathrm{Leb}_{\bbS^1}$.
Given a measurable subset $E$ of $M$ and a point $x$ in $M$, the \emph{time spent by the orbit of $x$ in $E$} is the number
$$
 \mathrm{Leb}_{\bbS^1} (\{t \in \bbS^1: S_{t}(x) \in E \}).
$$
The \emph{vector field tangent to the circle action $S$} is defined as
$$
\vec V(x) = \frac{d}{dt} S_{t}(x)_{\vert t=0}.
$$
The orbits of the action are the integral curves of $\vec V$. Given a diffeomorphism $\Phi$, the action given by 
$S'_{t}(x) = \Phi S_{t} (\Phi^{-1} x)$ is called the conjugated action. The vector field tangent to the conjugated action is the push-forward vector field $\Phi_{*} \vec V$.

\subsubsection*{(a) The transversality lemma}
Given a vector field $\vec V$ on some manifold $M'$, and a submanifold $X$, we wish to remove the tangency points between $\vec V$ and $X$ by performing a small perturbation of $X$, thus obtaining a submanifold which is transverse to every integral curve of the vector field $\vec V$.  Achieving this form of transversality is not in general possible: consider the case where $\vec V$ is a horizontal vector field in $\bbR^3$, with $X$ being the graph of a function on $\bbR^2$; we cannot remove the points on $X$ corresponding to local maxima or minima.  However, the following lemma provides a partial ``fix'' for this situation: it is possible to perform a small perturbation of $X$, or equivalently $\vec V$, so that $X$ becomes  ``as transverse as possible'' to the integral curves of $\vec V$.

We suppose that $X \subset M'$ is a submanifold without boundary (but not necessarily compact\footnote{In the applications of the lemma, the manifold $M'$ will be a quotient of the complement of the fixed points set of our circle action.}).
 Let $K$ be a  compact subset of $X$. We will say that $X$
 is {\it almost transverse} to a vector field $\vec V$ on $K$ if for every integral curve $\gamma$ of $\vec V$, 
every point of $\gamma \cap K$ is isolated in $\gamma \cap X$.
 We will say that $X$ is \emph{stably almost transverse} to $\vec V$ on $K$ if this property holds not only for $\vec V$ but also for  $\Phi_{*}\vec V$ for every $C^\infty$ diffeomorphism $\Phi$ of $M'$ in some neighborhood of the identity.

\begin{lemma}
\label{lemma.transversality}
Consider a symplectic manifold $M'$ with a non-vanishing vector field $\vec V$.  Let $X$ be a submanifold of $M'$ of codimension at least $1$, without boundary, and $K$ a compact subset of $X$.  For any $C^{\infty}$ neighborhood of the identity  $\mathcal W \subset \Symp(M, \omega)$ and any open neighborhood $O$ of $X$, there exists $\Phi \in \mathcal{W}$, whose support is contained in $O$, such that $X$ is stably almost transverse to $\Phi_{*}\vec V$ on $K$.
%
%
%
%
\end{lemma}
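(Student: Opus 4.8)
The plan is to reduce everything to a local, coordinate statement via a partition of unity over the compact set $K$, and then to handle the local model using a parametric transversality (Thom/Sard) argument. First I would observe that the conclusion is local and finite: since $K\subset X$ is compact, it suffices to find, for each point $p\in K$, a neighborhood $U_p$ in $M'$ with $\overline{U_p}\subset O$ and a compactly supported symplectic isotopy $\Phi^{(p)}$, $C^\infty$-small, supported in $U_p$, making $X\cap U_p$ almost transverse to $(\Phi^{(p)})_*\vec V$ there; then cover $K$ by finitely many such $U_{p_1},\dots,U_{p_m}$, and compose the corresponding diffeomorphisms. Because each $\Phi^{(p_i)}$ can be taken arbitrarily close to the identity, the composition lies in $\mathcal W$; and since ``almost transversality on a compact piece'' is an open condition on the vector field (more precisely, the earlier applications only need the \emph{stably almost transverse} version, so I will in fact establish the stable version from the start), the property achieved over $U_{p_i}$ survives the later small perturbations coming from the other $U_{p_j}$'s.

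Next I would set up the local model. Near a point $p\in X$, since $\vec V$ is non-vanishing, choose Darboux-type coordinates $(x_1,\dots,x_{2n})$ on a box $U=(-1,1)^{2n}$ in which $\vec V=\partial/\partial x_1$ (by the flow-box theorem; one can even arrange these to be Darboux coordinates adapted to $\vec V$, but strictly one only needs a flow box plus compatibility with $\omega$, which is why the perturbation is allowed to be merely symplectic and not to preserve $\vec V$). In these coordinates the integral curves of $\vec V$ are the horizontal lines. Write $X\cap U$ as a submanifold of codimension $\ge 1$. Almost transversality on a compact $K'\subset X\cap U$ means: for each horizontal line $\ell$, the set $\ell\cap X\cap K'$ is discrete in $\ell\cap X$; equivalently, $X$ contains no arc tangent to $\partial/\partial x_1$ through points of $K'$, i.e.\ there is no nontrivial horizontal segment lying inside $X$. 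I would reformulate this as a condition on the first coordinate function: letting $f=x_1|_X : X\cap U\to\mathbb R$, almost transversality near $K'$ is implied by $f$ having no critical point of infinite order along a horizontal direction — in fact it is cleanest to aim for the stronger genericity that $df$ restricted to $\ker(dx_2\wedge\cdots)$, i.e.\ the component of $\vec V$ tangent to $X$, vanishes only on a nowhere-dense set, which is automatic once the horizontal vector field on $X$ has isolated zeros transversally; a standard Sard/Thom jet-transversality argument shows this holds after an arbitrarily $C^\infty$-small perturbation.

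The perturbation itself I would produce as a Hamiltonian isotopy so that it is automatically symplectic: pick a bump function $\rho$ supported in $U$, equal to $1$ near $K'$, and for a generic small constant vector $a$ consider the Hamiltonian $H_a = \rho\cdot\langle a, x\rangle$ (a ``tilt''), whose time-one map $\Phi_a$ is $C^\infty$-small when $a$ is small, is supported in $U\subset O$, and is symplectic. Translating/tilting $X$ by $\Phi_a$ (equivalently, replacing $\vec V$ by $(\Phi_a)_*\vec V=\partial/\partial x_1 + O(a)$) moves the ``bad'' horizontal tangencies generically; by Sard's theorem applied to the map $(q,a)\mapsto$ (the quantity measuring horizontal tangency of $\Phi_a(X)$ at $\Phi_a(q)$), for almost every small $a$ the perturbed $X$ is almost transverse to $\vec V$ on $K'$. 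Since being almost transverse on the \emph{compact} set $\overline{K'}$ is an open condition in the $C^1$-topology on the vector field, the same $a$ works for all $\Phi_*\vec V$ with $\Phi$ near the identity, giving the stable version. Assembling the finitely many local perturbations and invoking the openness once more finishes the proof.

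The main obstacle, as the authors themselves flag with the horizontal-vector-field-in-$\mathbb R^3$ example, is that \emph{full} transversality is impossible — local extrema of $x_1|_X$ cannot be removed — so one must be careful to formulate the right ``isolated intersection points'' statement and check that it is (a) genuinely achievable by a Sard-type argument and (b) genuinely an open/stable condition on the compact set $K$. Getting those two points to mesh, i.e.\ choosing the perturbation class rich enough for parametric transversality yet the target condition weak enough to be stably open, is the crux; everything else (flow boxes, partitions of unity, composing small Hamiltonian perturbations, keeping supports inside $O$) is routine.
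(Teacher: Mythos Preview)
Your global architecture---localize on $K$, perturb, compose using openness---matches the paper's, but the local step has two genuine gaps.

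First, the family $H_a = \rho\langle a, x\rangle$ is far too small. Where $\rho \equiv 1$ its Hamiltonian flow is a translation, and in coordinates with $\vec V = \partial_{x_1}$ a translation satisfies $(\Phi_a)_*\vec V = \vec V$: the perturbation does nothing near $K'$. Even interpreted as moving $X$, a translation cannot break an infinite-order contact: if $X$ is the graph of $g(x)=e^{-1/x^2}\sin(1/x)$ in $\mathbb R^2$, every translate still has a flat point through which some horizontal line meets $X$ in a set accumulating there, so no $\Phi_a$ makes $X$ almost transverse to $\partial_x$. A $2n$-parameter family simply cannot generically avoid a jet condition of codimension larger than $2n$. (Relatedly, one cannot in general choose coordinates that are simultaneously Darboux and a flow box for an arbitrary non-vanishing $\vec V$, so your local model is already inconsistent.)

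Second, almost transversality on a compact set is \emph{not} open, so the propagation step fails. Take $X=\{y=e^{-1/x^2}\}\subset\mathbb R^2$: each horizontal line meets $X$ in at most two points, so $X$ is almost transverse to $\partial_x$ on any compact $K\ni(0,0)$. Yet the symplectic shear $\Phi(x,y)=(x,y+\psi(x))$, with $\psi$ equal to $e^{-1/x^2}$ on $(-\eta,\eta)$ and cut off outside, is $C^\infty$-close to the identity for small $\eta$ and sends $\{y=0\}$ to a curve coinciding with $X$ near the origin; hence $X$ is not almost transverse to $\Phi_*\partial_x$. The paper circumvents both issues at once: it fixes $k\ge\dim M'$ and aims for the genuinely $C^\infty$-open condition ``$X$ has no contact of order $k$ with $\Phi_*\vec V$ on $K$'' (which implies stable almost transversality). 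Working in Darboux coordinates where $X=\{x_n=0\}$---not where $\vec V$ is straight---it perturbs by the full space $\mathrm{Pol}_{k+1}$ of polynomial Hamiltonians, checks via $\{x_n,G\}=\partial G/\partial y_n$ that the induced map into $J^k(\mathbb R^{2n},\mathbb R)$ is a submersion, and then parametric transversality forces the $k$-jet of $x_n\circ\phi^1_H$ to miss the codimension-$(k+1)$ locus $\Sigma_{\vec V}$, since $k+1>2n$.
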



\subsubsection*{(b) The thickening lemma}

\begin{lemma}  \label{lemma.thickening}
Consider a metric space $M'$ with a continuous circle action $S$.   Let $X$ be a compact subset of $M'$, and assume that no orbit of the action spends a positive amount of time in $X$.

Then, for every $\varepsilon>0$ there exists $\delta>0$ such that no orbit of the action spends more time than $\varepsilon$ in the $\delta$-neighborhood of $X$.
\end{lemma}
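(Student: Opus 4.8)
The plan is to argue by contradiction, exploiting compactness of $X$, of the circle $\bbS^1$, and of the space of probability measures, in the spirit of the weak-* compactness arguments used elsewhere in the paper. So suppose the conclusion fails: there is some $\eps_0 > 0$ such that for every $\delta > 0$ some orbit spends more than $\eps_0$ in the $\delta$-neighborhood $X_\delta := \{y \in M' : d(y, X) < \delta\}$. Taking $\delta = 1/k$, I obtain points $x_k \in M'$ with $\mathrm{Leb}_{\bbS^1}(\{t : S_t(x_k) \in X_{1/k}\}) > \eps_0$ for all $k$.

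The key step is to pass to a limit. First I would like to assume $M'$ is compact; in the stated generality $M'$ is only a metric space, but the sets $X_{1/k}$ are all contained in the compact set $X_1$ (closure of the $1$-neighborhood of $X$ — here one should note that on a quotient of $M \setminus \Fix(S)$ this closed neighborhood is indeed compact for small enough radius, or simply restrict attention to $\overline{X_1}$ which one may assume compact, or work with $M'$ locally compact and $X$ compact so that $\overline{X_\delta}$ is compact for small $\delta$). The orbit closure $\overline{\{S_t(x_k) : t \in \bbS^1\}}$ need not help directly, so instead I push forward: let $\mu_k := (t \mapsto S_t(x_k))_* \mathrm{Leb}_{\bbS^1} \in \cP(M')$, a sequence of $S$-invariant Borel probability measures. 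After restricting to the relevant compact piece, $\cP$ of a compact metric space is weak-* compact, so up to a subsequence $\mu_k \to \mu$ weakly, and $\mu$ is again $S$-invariant.

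Now I estimate $\mu(\overline{X_\delta})$ for fixed $\delta > 0$: for all $k$ large enough that $1/k < \delta$ we have $\mu_k(X_\delta) \geq \mu_k(X_{1/k}) > \eps_0$, hence, since $X_\delta$ is open, by the portmanteau theorem $\mu(X_\delta) \geq \limsup \mu_k(X_\delta) \geq \eps_0$. Wait — portmanteau gives $\liminf \mu_k(\text{open}) \geq \mu(\text{open})$, the wrong direction; so instead I use the closed set $\overline{X_\delta}$ and $\limsup \mu_k(\overline{X_\delta}) \leq \mu(\overline{X_\delta})$, combined with $\mu_k(\overline{X_\delta}) \geq \mu_k(X_{1/k}) > \eps_0$ for $k$ large, to get $\mu(\overline{X_\delta}) \geq \eps_0$ for every $\delta > 0$. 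Letting $\delta \to 0$ along a sequence, $\overline{X_\delta} \downarrow X$ (using that $X$ is closed, being compact), so by continuity of the measure from above, $\mu(X) \geq \eps_0 > 0$.

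Finally I derive the contradiction with the hypothesis that no orbit spends positive time in $X$. Since $\mu$ is an $S$-invariant probability measure with $\mu(X) > 0$, I want to produce a single orbit spending positive time in $X$; this is where I'd invoke a disintegration / ergodic decomposition of $\mu$ along the circle action, or more elementarily Fubini. Concretely: $S$-invariance means that $\mu$ disintegrates over the orbit space, and $\mu(X) = \int \big(\mathrm{Leb}_{\bbS^1}(\{t : S_t(x) \in X\})\big)\, d\bar\mu$ where $\bar\mu$ is the pushforward of $\mu$ to the orbit space (every $S$-invariant measure for a free or locally free circle action is, locally, Lebesgue-on-fibers times a transverse measure — and even without freeness one can use that $\mu$ restricted to a set of full measure decomposes this way, since a continuous circle action on a metric space has a measurable fundamental domain structure off the fixed set). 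Since the integral is positive, the integrand is positive on a positive-$\bar\mu$-measure set, so there exists at least one $x$ with $\mathrm{Leb}_{\bbS^1}(\{t : S_t(x) \in X\}) > 0$ — an orbit spending positive time in $X$, contradicting the hypothesis.

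The main obstacle I anticipate is the last step: making precise the passage from "$S$-invariant measure with $\mu(X)>0$" to "some individual orbit spends positive time in $X$." For a genuinely free circle action this is a clean disintegration; here the action may have exceptional orbits (though it is at least locally free off $\Fix(S)$ in the intended application), so one must either restrict to the free part (justified because the non-free orbits, being the preimages of faces, form a set that one can control) or give a direct argument: e.g., note $\psi(x) := \mathrm{Leb}_{\bbS^1}(\{t : S_t(x) \in X\})$ is $S$-invariant and upper semicontinuous (since $X$ is closed), and $\int \psi \, d\mu = \mu(X) > 0$ by Fubini applied to the $S$-invariant measure $\mu$ and the set $\{(t,x) : S_t(x) \in X\}$, so $\psi > 0$ somewhere. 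I'd phrase the final version around this Fubini identity, which only needs $S$-invariance of $\mu$ and measurability of $X$, avoiding any freeness hypothesis entirely.
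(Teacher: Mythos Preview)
Your approach is sound but takes a genuinely different route from the paper's. The paper gives a direct, elementary covering argument: for each $x \in X$ it chooses a compact $I_x \subset \bbS^1$ of measure greater than $1-\varepsilon$ with $S_t(x) \notin X$ for $t \in I_x$, then by joint continuity of the action thickens to $d(S_t(y),X) > \delta_x$ for all $t \in I_x$ and all $y$ in a neighborhood $V_x$ of $x$, covers $X$ by finitely many $V_{x_i}$, and takes $\delta$ smaller than every $\delta_{x_i}$ and small enough that $V_\delta(X) \subset \bigcup_i V_{x_i}$. No measures, no portmanteau, no Fubini---only compactness of $X$ and of $\bbS^1$ plus continuity of the action. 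This works verbatim on any metric space, which is the stated generality.

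Two remarks on your route. First, the step you flag as the main obstacle is actually the easiest: for any $S$-invariant Borel probability $\mu$, Tonelli on $\bbS^1 \times M'$ gives $\int \psi \, d\mu = \int_{\bbS^1} \mu(S_{-t}X)\,dt = \mu(X)$, with no freeness hypothesis whatsoever. Second, the genuine soft spot is the compactness you wave past: on a bare metric space $\overline{X_\delta}$ need not be compact and you have no tightness for the $\mu_k$. A clean fix that stays within your framework and recovers the full generality: since each orbit meets $X_{1/k}$, replace $x_k$ by a point of its orbit lying in $X_{1/k}$ (this leaves $\mu_k$ unchanged); then $d(x_k,X)<1/k$, so by compactness of $X$ a subsequence converges to some $x \in X$. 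Continuity of the action and dominated convergence now give $\mu_k \to \mu$ weakly, where $\mu$ is the orbit measure of $x$ itself, and your portmanteau and continuity-from-above steps yield $\mu(X) \geq \varepsilon_0$---a direct contradiction that even bypasses the Fubini step.
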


\subsubsection*{(c) The stability lemma}

\begin{lemma}\label{lemm:stability}
Consider a symplectic manifold $M'$ with a locally free circle action $S$. 
Let $K_{1}, K_{2}$ be compact subsets of $M'$ with $K_{1} \subset \Int(K_{2})$. 

Let $c \in (0,1)$ and assume that every orbit of the action $S$ spends more time than $c$ in $K_{1}$.  Then, there exists a $C^0$ neighborhood $\mathcal W$ of $\Id  \in \Diff(M)$ such that every orbit of the conjugated action $\Phi S \Phi^{-1}$ spends more time than $c$ in $K_{2}$.
\end{lemma}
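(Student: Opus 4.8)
The plan is to reduce the statement to two elementary facts: that the time an orbit spends in a set is monotone under inclusion of sets, and that conjugating the action by $\Phi$ is, from the point of view of ``time spent'', the same as replacing $K_2$ by $\Phi^{-1}(K_2)$. Concretely, fix $y \in M'$ and set $x = \Phi^{-1}(y)$. For every $t$ one has $\Phi S_t \Phi^{-1}(y) \in K_2$ if and only if $S_t(x) \in \Phi^{-1}(K_2)$, so the time spent by the orbit of $y$ under the conjugated action $\Phi S \Phi^{-1}$ in $K_2$ equals $\Leb_{\bbS^1}(\{t \in \bbS^1 : S_t(x) \in \Phi^{-1}(K_2)\})$. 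Since $y \mapsto \Phi^{-1}(y)$ is a bijection of $M'$, it therefore suffices to exhibit a $C^0$ neighborhood $\mathcal W$ of $\Id$ such that $\Phi^{-1}(K_2) \supset K_1$ for every $\Phi \in \mathcal W$: the hypothesis then gives $\Leb_{\bbS^1}(\{t : S_t(x) \in \Phi^{-1}(K_2)\}) \geq \Leb_{\bbS^1}(\{t : S_t(x) \in K_1\}) > c$ for all $x \in M'$, which is exactly the desired conclusion.

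\textbf{Construction of $\mathcal W$.} Since $K_1$ is compact and contained in the open set $\Int(K_2)$, fix a metric on $M'$ and choose $\delta > 0$ so that the closed $\delta$-neighborhood of $K_1$ is contained in $K_2$. Let $\mathcal W$ be the $C^0$ neighborhood of $\Id$ consisting of those diffeomorphisms $\Phi$ with $d(\Phi(z), z) < \delta$ for all $z \in K_1$; as $K_1$ is compact, this is a basic open neighborhood of $\Id$ in the $C^0$ topology. For $\Phi \in \mathcal W$ we then have $\Phi(K_1)$ contained in the $\delta$-neighborhood of $K_1$, hence in $K_2$, i.e.\ $K_1 \subset \Phi^{-1}(K_2)$, which is what the first step required.

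\textbf{Conclusion and main obstacle.} Combining the two steps proves the lemma. Note that the argument uses neither the symplectic structure nor local freeness of $S$; these are simply inherited from the context in which the lemma will be applied. There is, in fact, essentially no obstacle here: the only point deserving a word of care is the interaction between the (weak) $C^0$ topology on $\Diff(M')$ for possibly noncompact $M'$ and the requirement that elements of $\mathcal W$ displace the points of $K_1$ by less than $\delta$, but since $K_1$ is compact this is precisely a basic open neighborhood of the identity.
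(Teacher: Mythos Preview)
Your proof is correct and follows essentially the same idea as the paper's own argument, which is a two-sentence sketch: if $\Phi$ is $C^0$-close to the identity then orbits of $\Phi S \Phi^{-1}$ stay uniformly close to orbits of $S$, while the distance from $K_1$ to $M' \setminus K_2$ is positive. You have simply made this precise via the clean identity $\Phi S_t \Phi^{-1}(y) \in K_2 \Longleftrightarrow S_t(\Phi^{-1} y) \in \Phi^{-1}(K_2)$ together with the inclusion $K_1 \subset \Phi^{-1}(K_2)$; your remark that neither the symplectic structure nor local freeness is actually used is also accurate.
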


\subsubsection*{(d) The transportation lemmas}
Consider $\mathbb R^{2n}$ equipped with the coordinates $x_1, y_1, \ldots, x_n, y_n$.  A standard  polydisc in $\mathbb R^{2n}$ is a subset of the form 
$$\prod_{i=1}^n [a_i, b_i] \times [c_i, d_i] := \{(x_i, y_i): x_i \in [a_i, b_i], y_i \in [c_i, d_i] \}.$$  
By a \emph{polydisc} in $M$ we mean the image of a symplectic embedding of a standard polydisc.  Note that our polydiscs are all closed.

In the next two lemmas $P$ denotes the polydisc  $[-a,a]^{2n} \subset \bb R^{2n}$ for some $a>0$.  We assume that $P$ is equipped with the standard symplectic structure it inherits from $\bb R^{2n}$. 

\medskip

\begin{lemma}\label{lemm:transport_lemma} Let $\phi_1, \phi_{2}:P \rightarrow M''$ be two symplectic embeddings of $P$ into a symplectic manifold $(M'', \omega)$.  There exists $\delta_{0}>0$ and a compactly supported $\Psi \in \Symp_{0}(M'')$ such that 
$$
\Psi \circ \phi_{1} = \phi_{2} \text{ on } [-\delta_{0},\delta_{0}]^{2n}. $$
Furthermore, if $\phi_{1}(0) \neq \phi_{2}(0)$, then  we may require, in addition to the above, that $$
\Psi \circ \phi_{2} = \phi_{1} \text{ on } [-\delta_{0},\delta_{0}]^{2n}.
$$

%
%
\end{lemma}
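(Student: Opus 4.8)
The plan is to produce $\Psi$ as the time-one map of a compactly supported Hamiltonian isotopy, obtained by following a path of symplectic embeddings from $\phi_1$ to $\phi_2$ and using an Alexander-type trick near the origin together with the extension of ambient symplectic isotopies. First I would reduce to the case where $\phi_1$ and $\phi_2$ agree to first order at $0$: composing $\phi_2$ with a symplectic linear map (realized near $0$ inside a small Darboux chart, then cut off) we may arrange $\phi_1(0) = \phi_2(0) =: p$ and $d\phi_1(0) = d\phi_2(0)$, provided we are in the first (non-disjoint) case; the cut-off is supported in a small ball and contributes a factor to $\Psi$. In the disjoint case $\phi_1(0)\neq\phi_2(0)$ I would instead first move $\phi_1(0)$ to $\phi_2(0)$ by a compactly supported Hamiltonian diffeomorphism (possible since $M''$ is connected), reducing again to the coincident-point situation, and I will come back to the ``furthermore'' clause at the end.

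Next, assuming $\phi_1,\phi_2$ share the same $1$-jet at $p$, I would use a standard rescaling/isotopy argument: for $\lambda \in (0,1]$ set $\phi_i^\lambda(z) = \phi_i(\lambda z)$ on the rescaled polydisc; as $\lambda \to 0$ the embeddings $\phi_1^\lambda$ and $\phi_2^\lambda$, suitably renormalized, converge to their common linear part, so for small enough $\delta_0$ the embeddings $\phi_1$ and $\phi_2$ restricted to $[-\delta_0,\delta_0]^{2n}$ are joined by a smooth path of symplectic embeddings $\{\Phi_s\}_{s\in[0,1]}$ with $\Phi_0 = \phi_1$, $\Phi_1 = \phi_2$, all sending $0$ to $p$. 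Differentiating this path gives a time-dependent symplectic vector field along the shrunk polydisc; since $H^1$ of a polydisc vanishes, this field is Hamiltonian, generated by a function on (a neighborhood of) the image. I would cut this Hamiltonian off to be compactly supported in $M''$ and let $\Psi$ be the time-one map of the resulting flow; by construction $\Psi \circ \phi_1 = \phi_2$ on $[-\delta_0,\delta_0]^{2n}$, after possibly shrinking $\delta_0$ so that the whole isotopy keeps that sub-polydisc inside the region where the cut-off is trivial.

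The main obstacle, and the place requiring care, is the construction of the path $\{\Phi_s\}$ of symplectic embeddings with fixed behavior at the center and the verification that one may shrink $\delta_0$ uniformly so the generating vector field is genuinely Hamiltonian on a fixed neighborhood; this is essentially the parametric Darboux/Moser argument, and one must check the isotopy can be taken to fix $p$ throughout so nothing escapes the support region. For the ``furthermore'' clause when $\phi_1(0)\neq\phi_2(0)$: after the above we have some compactly supported $\Psi_1$ with $\Psi_1\phi_1 = \phi_2$ near $0$; the images $\phi_1([-\delta_0,\delta_0]^{2n})$ and $\phi_2([-\delta_0,\delta_0]^{2n})$ are disjoint for $\delta_0$ small (since the centers differ), so I can additionally compose with a compactly supported symplectomorphism supported near these two small disjoint polydiscs that swaps a neighborhood of one onto a neighborhood of the other — built again as a Hamiltonian time-one map along a path exchanging the two embeddings through the complement — to obtain a single $\Psi$ satisfying both $\Psi\circ\phi_1 = \phi_2$ and $\Psi\circ\phi_2 = \phi_1$ on a possibly smaller $[-\delta_0,\delta_0]^{2n}$.
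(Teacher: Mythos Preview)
Your argument for the first part is correct and is a genuine alternative to the paper's proof. The paper proceeds differently: after reducing to $\phi_1(0)=\phi_2(0)=0$ in a Darboux chart $U$, it invokes the Ekeland--Hofer ``extension after restriction'' principle to extend $\phi_2\phi_1^{-1}$ to a compactly supported $\psi\in\Symp_c(\bbR^{2n})$, then quotes a result of Schlenk to obtain an isotopy $\Psi^t$ from $\Id$ to $\psi$ fixing the origin, and finally cuts off the generating Hamiltonian near the orbit of a small cube. Your route avoids both black boxes by matching the $1$-jets (using that $Sp(2n)$ is connected) and then running the Alexander rescaling $\lambda\mapsto \lambda^{-1}\phi_i(\lambda z)$ to produce the path of embeddings by hand; this is more self-contained, though you should make explicit that the correct rescaling is $\lambda^{-1}\phi_i(\lambda z)$ (which is symplectic) and that the whole family stays inside the Darboux chart after one shrinks the domain.

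For the ``furthermore'' clause your write-up has a gap. You produce $\Psi_1$ with $\Psi_1\phi_1=\phi_2$ near $0$ and then say you will ``additionally compose'' with a swap supported near the two small polydiscs. But composing a swap $\sigma$ with $\Psi_1$ on either side destroys the relation you already have: $\sigma\Psi_1\phi_1=\sigma\phi_2\neq\phi_2$ and $\Psi_1\sigma\phi_1\neq\Psi_1\phi_1$ once $\sigma$ genuinely moves the images. The idea you sketch in the same sentence---build $\Psi$ directly as the time-one map of an isotopy of the disjoint union $[-\delta_0,\delta_0]^{2n}\sqcup[-\delta_0,\delta_0]^{2n}$ exchanging the two components through the complement---does work for small $\delta_0$, but then $\Psi_1$ plays no role and should not be composed. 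The paper organises this step more transparently: it first takes any $\theta\in\Symp_0(M'')$ swapping the \emph{centres} $\phi_1(0)$ and $\phi_2(0)$, and then applies the first part twice, once near each centre, to obtain corrections $\Psi_1,\Psi_2$ with disjoint supports satisfying $\Psi_1\theta\phi_2=\phi_1$ and $\Psi_2\theta\phi_1=\phi_2$ on a small cube; then $\Psi=\Psi_1\Psi_2\theta$ does the job, since each $\Psi_i$ is the identity on the image of the other $\phi_j$.
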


\medskip

In the next lemma $P_1, P_2 \subset \bb R^{2n}$ denote the polydiscs $p_1 + [-b,b]^{2n}$, $p_2 + [-b,b]^{2n}$, respectively, where $p_1, p_2$ are points in $\bb R^{2n}$.  The polydisc $P$ is as in the above lemma. The following statement is well-known in the two dimensional setting when $n=1$;  we leave it to the reader to check that the proof can be reduced to the case where $n=1$.

\begin{lemma}\label{lemm:polydisc_swapping_lemma}
Suppose that $P_1, P_2$ are disjoint and are contained in the interior of $P$.  There exists a symplectomorphism $\Psi$, whose support is compactly contained in the interior of $P$, such that 

$$\Psi(P_1) = P_2  \;\; \& \;\; \Psi(P_2) = P_1.$$ 
%
%
\end{lemma}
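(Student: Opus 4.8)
The plan is to reduce to the two-dimensional case $n=1$, which the statement takes for granted, by a ``straighten, then swap'' strategy. Write $P=Q_1\times\cdots\times Q_n$ with $Q_i=[-a,a]^2$ the $i$-th symplectic coordinate plane, and $P_j=A_1^{(j)}\times\cdots\times A_n^{(j)}$ for $j=1,2$, each $A_i^{(j)}$ a translate of $[-b,b]^2$ inside $Q_i$. Since $P_1\cap P_2=\prod_i\bigl(A_i^{(1)}\cap A_i^{(2)}\bigr)=\emptyset$, at least one factor is empty; after permuting the coordinate blocks (a symplectomorphism preserving $P$) I may assume $A_1^{(1)}\cap A_1^{(2)}=\emptyset$. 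This first coordinate --- the one in which the two polydiscs already project to \emph{disjoint} squares --- is what will provide enough room.

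First I would produce a symplectomorphism $\Phi$, supported in a compact subset of $\Int P$, with $\Phi(P_2)=P_2$ and $\Phi(P_1)=A_1^{(1)}\times A_2^{(2)}\times\cdots\times A_n^{(2)}$; that is, after applying $\Phi$ the two polydiscs differ only in the first coordinate. I build $\Phi=\Xi_n\circ\cdots\circ\Xi_2$, where $\Xi_i$ only rearranges the first and $i$-th coordinates. For this, choose a compactly supported Hamiltonian isotopy $(g_i^t)$ of $Q_i$ with $g_i^1(A_i^{(1)})=A_i^{(2)}$ --- a cut-off translation of the plane works, since both squares, together with the segment of intermediate translates, lie in $\Int Q_i$ --- generated by $K_i^t\in C^\infty_c(\Int Q_i)$; and choose $\lambda_i\in C^\infty(Q_1)$ equal to $1$ near $A_1^{(1)}$ and to $0$ near $A_1^{(2)}$ and near $\partial Q_1$ (possible because $A_1^{(1)}$ is disjoint from the closed set $A_1^{(2)}\cup\partial Q_1$). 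Let $\Xi_i$ be the time-one map on $Q_1\times Q_i$ of the Hamiltonian $(z_1,z_i)\mapsto\lambda_i(z_1)\,K_i^t(z_i)$, with, to be pedantic, an extra cut-off $\nu$ in the remaining coordinates so that $\supp\Xi_i$ stays in a compact subset of $\Int P$. Where $\lambda_i\equiv1$ this Hamiltonian is independent of $z_1$, so its flow fixes the first coordinate and acts on $z_i$ by $g_i^t$; hence $\Xi_i$ carries the $i$-th factor of $P_1$ onto $A_i^{(2)}$ and leaves the other factors of $P_1$ unchanged. Where $\lambda_i\equiv0$ the flow is trivial, so $\Xi_i$ fixes $P_2$. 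Composing the $\Xi_i$ gives $\Phi$.

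After this, $\Phi(P_1)=A_1^{(1)}\times R$ and $\Phi(P_2)=A_1^{(2)}\times R$ with $R=A_2^{(2)}\times\cdots\times A_n^{(2)}$ fixed, and $A_1^{(1)},A_1^{(2)}$ disjoint $b$-squares in $\Int Q_1$. The $n=1$ case, applied in $Q_1=[-a,a]^2$, provides $\psi$ supported in a compact subset of $\Int Q_1$ interchanging $A_1^{(1)}$ and $A_1^{(2)}$; realizing $\psi$ as a Hamiltonian flow and fibering it over a cut-off neighbourhood of $R$ in the remaining coordinates (exactly as for the $\Xi_i$) yields $\Psi_0$, supported in a compact subset of $\Int P$, interchanging $\Phi(P_1)$ and $\Phi(P_2)$. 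Finally $\Psi:=\Phi^{-1}\circ\Psi_0\circ\Phi$ is supported in a compact subset of $\Int P$ and, since $\Phi$ fixes $P_2$, one checks directly that $\Psi(P_1)=P_2$ and $\Psi(P_2)=P_1$. I expect the straightening step --- the construction of $\Phi$ --- to be where the real content lies: \emph{a priori} one could fear a symplectic camel obstruction when moving $P_1$ towards $P_2$, and the point is that carrying out the deformation fibrewise over the coordinate in which the two polydiscs already project to disjoint squares means $P_1$ is never pushed ``through'' $P_2$, so no such obstruction arises. The input $n=1$ statement is classical: two disjoint discs of equal area inside a disc are interchanged by an area-preserving diffeomorphism supported in the interior, e.g.\ after first moving them into a standard symmetric position, by a suitably cut-off half-turn.
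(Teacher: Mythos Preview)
Your proof is correct and is precisely the reduction the paper has in mind: the paper does not prove this lemma at all, but merely remarks that ``the proof can be reduced to the case where $n=1$'' and leaves the verification to the reader. Your straighten-then-swap argument, using the coordinate block in which the two polydiscs already project to disjoint squares to fibre all the moves, is a natural and complete way to carry out that reduction.
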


\subsection{Proofs of the lemmas} 

In this section, we will provide proofs for Lemmas \ref{lemma.transversality}, \ref{lemma.thickening}, \ref{lemm:stability}, \ref{lemm:transport_lemma}  of the previous section, leaving the proof of Lemma \ref{lemm:polydisc_swapping_lemma}  to the reader.

\subsubsection*{(a) Proof of Lemma \ref{lemma.transversality}: Transversality Lemma}
We will need the following notion in the course of the proof. Let $M'$ be as in the statement of the lemma and $X$ be a hypersurface without boundary in $M'$.
Given a vector field $\vec V$ and a positive integer $k$, we say that $X$ {\it has a contact of order $k$ with $\vec V$} at some point $x \in X$ if there exist an integral curve $\alpha: (-\varepsilon,\varepsilon) \to M'$ of $\vec V$ and a smooth curve $\beta:(-\varepsilon,\varepsilon) \to X$ such that $\alpha(0)=\beta(0)=x$ and the derivatives of $\alpha$ and $\beta$ coincide at $0$ up to order $k$. 

\begin{remarks}\label{rem:contact} Here are some useful observations about the above notion.
\begin{enumerate}
\item If $\Phi$ is a diffeomorphism, then $\Phi(X)$ has a contact of order $k$ with $\vec V$ if and only if $X$ has a contact of order $k$ with $\Phi_{*}^{-1} \vec V$.

\item If $\alpha:I \to M'$ is any integral curve of $\vec V$ and if the intersection set
$$
\alpha(I) \cap X
$$
has an accumulation point at $z=\alpha(t)$, then $X$ has a contact of every positive order with $\vec V$ at $z$.

\item If $K$ is a compact subset of $X$, then the set of diffeomorphisms $\Phi$ such that $X$ has no contact of order $k$ with $\Phi_{*} \vec V$ at points of $K$ is open in the space of diffeomorphisms equipped with the $C^{\infty}$ topology.

\item If $X$ has no contact of order $k$ with $\vec V$ on $K$, for some positive integer $k$ and some compact subset $K$ of $X$, then $X$ is stably almost transverse to $\vec V$ on $K$. This is a direct consequence of the previous two points.
\end{enumerate}
\end{remarks}

  Our proof of Lemma \ref{lemma.transversality} requires the  claim below. We assume the hypotheses of Lemma~\ref{lemma.transversality}. 
  
\begin{claim} \label{cl:transversality}
Consider a symplectic manifold $M'$, a submanifold $X \subset M'$ without boundary and of codimension at least $1$, and some neighborhood $O$ of $X$.
Let $k \geq \dim(M')$.

Then, every point $z_{0} \in X$ admits a pair of open neighborhoods $(W,U)$ with $W \subset \overline{W} \subset U$ in $M'$, which are contained in $O$ and have the following property. 
For every non-vanishing vector field $\vec V$, and every neighbourhood $\mathcal N$ of the identity in $\Symp(M')$,   there exists $\Phi \in \mathcal N$, compactly supported in $U$, such that $X$ has no contact of order $k$ with $\Phi_{*}\vec V$ in $\overline{W}$.
\end{claim}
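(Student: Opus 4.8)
The strategy is to work in a local symplectic chart around $z_0$ and reduce to a concrete normal-form computation. First I would invoke the Darboux theorem to fix a symplectic embedding of a neighborhood of $z_0$ onto an open subset of $\bbR^{2n}$ (where $2n = \dim M'$), chosen small enough to lie inside $O$, and so that $X$ corresponds to a coordinate subspace of codimension at least $1$ (shrinking if necessary; since $X$ is a submanifold, locally it is a graph, and after a further symplectic change of coordinates we can straighten it — or, more cheaply, we can just straighten it by an ambient diffeomorphism, since the claim only asks $\Phi$ to be symplectic, not the straightening). Within this chart I would pick nested round balls $W \subset \overline W \subset U$ centered at $z_0$. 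The point of taking $k \geq \dim(M')$ is that a contact of order $k$ forces the integral curve of $\vec V$ and a curve in $X$ to agree to order exceeding the dimension, which is the threshold at which a generic perturbation can destroy all such contacts simultaneously along a compact set.

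The heart of the matter is constructing $\Phi$. The idea is to use a Hamiltonian perturbation of the form $\Phi = \phi^1_{\rho H}$ where $H$ is a quadratic (or higher-degree polynomial) Hamiltonian supported via a cutoff $\rho$ in $U$ and $\phi$ the corresponding Hamiltonian flow; taking $\rho$ small keeps $\Phi$ in the prescribed neighborhood $\mathcal N$ of the identity. Pushing forward $\vec V$ by $\Phi$ changes its jet at each point of $\overline W$, and one wants to show that for a generic (in fact, an explicit polynomial) choice of $H$, the perturbed vector field $\Phi_* \vec V$ has no order-$k$ contact with $X$ at any point of $\overline W$. Concretely: having a contact of order $k$ with $\vec V$ at a point $x$ is a condition on the $k$-jet of $\vec V$ at $x$ relative to the subspace $X$ — namely that the integral curve of $\vec V$ through $x$ osculates $X$ to order $k$, which (since $X$ has codimension $\geq 1$) is cut out by at least one non-trivial equation on the $k$-jet. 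A parametric transversality / Thom–Sard argument then shows that the set of $H$ for which this bad locus meets $\overline W$ is meagre, hence we can pick $H$ arbitrarily small avoiding it; then $\Phi = \phi^1_{\rho H}$ works, and its support can be confined to $U$ by the cutoff.

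I expect the main obstacle to be the dimension count that makes the transversality argument go through — specifically verifying that the ``osculation to order $k$'' condition, as a subset of the jet bundle over $\overline W$, has codimension strictly greater than $\dim \overline W = 2n$, so that a generic section misses it entirely (not just transversally). This is precisely where the hypothesis $k \geq \dim(M')$ is used: roughly, each additional order of contact imposes one more independent scalar condition (because the normal direction to $X$ is at least one-dimensional), so $k$ conditions beat the $2n$ dimensions of $\overline W$ once $k \geq 2n$. Making this rigorous requires care in choosing a family of perturbations $H$ rich enough to move the relevant jets in all needed directions — this is a routine but slightly tedious explicit construction with polynomial Hamiltonians. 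Once the jet-level genericity is established, the openness in Remark \ref{rem:contact}(3) and the compactness of $\overline W$ (or rather of the relevant compact piece) let us pass from ``generic $H$'' to an actual choice, completing the proof of the claim.
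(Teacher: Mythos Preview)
Your proposal is correct and follows essentially the same route as the paper's proof: both work in a Darboux chart where $X$ becomes a coordinate hyperplane, perturb via the time-one map of a polynomial Hamiltonian (later cut off to have support in $U$), reformulate order-$k$ contact as membership in a codimension-$(k+1)$ subset of the $k$-jet space, and apply parametric transversality together with $k+1 > 2n$ to make the image miss this locus entirely. The paper handles your ``straightening'' worry by first enlarging $X$ to a hypersurface and then using Darboux--Carath\'eodory to write it as $\{x_n=0\}$, and the ``rich enough family'' step you flag is exactly the explicit submersion check the paper performs via the identity $\{x_n, G\} = \partial G/\partial y_n$.
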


Before proving Claim \ref{cl:transversality}, we will show that it implies Lemma \ref{lemma.transversality}.

\begin{proof}[Proof of the transversality lemma]
Observe that it is sufficient to find $\Phi \in \mathcal{W}$, with support in $O$, such that $\Phi(X)$ has no contact of order $k = \mathrm{dim}(M')$ with $\vec{V}$ at any point of $K$; see Remark \ref{rem:contact}.

For each point $z \in K$, let $W_z, U_z$ be open neighborhoods of $z$  in $M'$ as provided by Claim \ref{cl:transversality}.  The collections $W_z, U_z$ are two coverings of $K$ by open subsets of $M'$. By passing to subcovers we may assume these coverings are finite and we may denote them by $W_i, U_i$, where $ i \in \{1, \dots, N\}$

A first application of the claim provides $\Phi_{1} \in \mathcal{W}$, with support in $U_1 \subset O$, such that $X$ has no contact of order $k$ with $\Phi_{1*}\vec V$ on $\overline{W}_{1}$.
A second application provides $\Phi_{2} \in \Symp(M')$ such that $X$ has no contact of order $k$ with $(\Phi_2 \circ \Phi_1)_{*}\vec V$ on $\overline{W}_{2}$. Furthermore, since $\Phi_{2}$ may be chosen to be arbitrarily $C^{\infty}$ close to the identity, by point 3 of Remark~\ref{rem:contact} we can ensure that $\Phi_2 \circ \Phi_1 \in \mathcal{W}$ and that $X$ has no contact of order $k$ with $(\Phi_2 \circ \Phi_1)_{*}\vec V$ on $\overline{W}_{1}$ either. Note also that $\Phi_2 \circ \Phi_1$ is supported in $U_1 \cup U_2 \subset O$. 

We proceed to build a sequence $(\Phi_{i})_{i=1, \dots, N}$ in $\Symp(M')$ with analogous properties. In particular, the map $\Phi = \Phi_{N} \circ \cdots \circ \Phi_{1}$ belongs to $\mathcal{W}$,  is supported in $U_1 \cup \ldots \cup  U_N \subset O$, and  $X$ has no contact of order $k$ with $\Phi_{*}\vec{V}$ on $\overline{W}_{1} \cup \ldots \cup \overline{W}_N$, which contains $K$. By point 4 of Remark~\ref{rem:contact}, $X$ is stably almost transverse to 
$\Phi_{*}\vec{V}$ on $K$, as wanted.
\end{proof}

\begin{proof}[Proof of Claim \ref{cl:transversality}]
Since every submanifold is contained in an open hypersurface without boundary, we will restrict ourselves to the case where $X$ is a hyper surface.  Take $(U, x_1, y_1, x_2, y_2, \ldots, x_n, y_n)$ to be a Darboux chart centered at $z_0$, and contained in $O$, in which $X$ is given by the equation $x_n = 0$.  By working in these local coordinates we may assume that $M'= \bb R^{2n}$, $X$ is the hyperplane given by $x_n= 0$, and the point $z_0 = 0$, where $0$ is the origin in $\bb R^{2n}$.  We pick $W$ to be any open neighborhood of the origin such that $\overline{W}$ is contained in $U$.

Denote $\mathrm{dim}(M') = 2n$.  To prove the claim, we will translate the property of having contact of order $k$ to an equivalent property in the space $J^k(\bbR^{2n},\bbR)$ of $k$-jets of maps from $\bbR^{2n}$ to $\bbR$. Recall that $J^k(\bbR^{2n},\bbR)$ identifies with $\bbR^{2n} \times \mathrm{Pol}_{k}$, where $\mathrm{Pol}_{k}$ is the linear space of polynomials of degree at most $k$ on $\bbR^{2n}$; see for example Section 1.1 of \cite{Eliashberg-Mishachev}. 

We now consider a non vanishing vector field $\vec V$ on $\bbR^{2n}$.
Let $\Sigma_{\vec V}$ be the set of couples $(z, P) \in J^k(\bbR^{2n},\bbR)$ such that there exists some integral curve $\gamma: (-\varepsilon,\varepsilon) \to \bbR^{2n}$ of the vector field $\vec V$ with $\gamma(0)=z$ and $\lim_{t\to 0} \frac{P(\gamma(t))}{t^k}= 0$. Note that  $\Sigma_{\vec V}$ is a submanifold of $J^k(\bbR^{2n},\bbR)$ of codimension $k+1$. Indeed, to verify this particular point we can work in a small neighborhood of the point $z \in \bb R^{2n}$ where we may assume that $\vec{V}$ coincides with  the vector field~$\frac{\partial}{\partial x_1}$.  Then $\Sigma_{\vec V}$ may be described locally as $$
\left\{(z,P): P(0)=0, \frac{\partial P}{\partial x_{1}}(0)=0, \dots, \frac{\partial ^k P}{\partial x_{1}}(0)=0 \right\},
$$
which is a linear subspace of $J^k(\bbR^{2n},\bbR)$ of codimension $k+1$.

Finally, here is the promised translation. Let $f:\bbR^{2n} \to \bbR$ be a smooth  function and suppose that $f^{-1}(0)$ is a hypersurface.  Then,   $j^k(f)_{z}$, the $k$-jet  of $f$ at $z$, belongs to  $\Sigma_{\vec V}$  if and only if $f^{-1}(0)$ has a contact of order $k$ with $\vec V$ at $z$.

For the rest of the proof we set $f=x_n$ so that $X = f^{-1}(0)$.  We claim that we can find a smooth function $H : \bbR^{2n} \to \bbR$, arbitrarily $C^{\infty}$ close to $0$, such that under  the map  $\bbR^{2n} \to J^k(\bbR^{2n},\bbR)$, 
$$
z \mapsto j^k(f \circ \phi^1_{H})_{z},
$$
the image of $U$ is disjoint from $\Sigma_{\vec V}$.

Let us first suppose that such $H$ exists and explain how this leads to the conclusion.  Note that $(f \circ \phi^1_{H})^{-1}(0) = \phi^{-1}_H(X)$ and so we have that $ \phi^{-1}_H(X)$ has no contact of order $k$ with $\vec V$ inside the set $U$.  Since  $H$ can be picked to be arbitrarily $C^{\infty}$ close to $0$, we may perform a cut-off to obtain a function $G$, which is compactly supported in $U$ and is still $C^\infty$ close to zero, such that  $ \phi^{-1}_G(X)$ has no contact of order $k$ with $\vec V$ inside $\overline{W}$.  

It remains to explain how to find $H$. Consider the map $$\Psi : \overline{U} \times \mathrm{Pol}_{k+1}  \to J^k( \overline{U} ,\bbR)$$
$$
(z, H) \mapsto j^k(f \circ \phi^1_{H})_{z}.
$$
We will check below that there exists some  neighborhood  $\mathcal U$ of $0$ in $\mathrm{Pol}_{k+1}$ such that $\Psi$ is a submersion at every point $(z, H) \in \overline U \times \mathcal U$.

This entails that the restriction of $\Psi$ to  $\overline U  \times \mathcal{U} $ is transverse to any submanifold; in particular it is transverse to $\Sigma_{\vec V}$.  Next, applying the parametric transversality theorem, we obtain a dense subset of polynomials $\mathcal H \subset \mathcal{U}$ such that for any fixed $H \in \mathcal H$ the mapping
$$\Psi(\cdot, H) : \overline U   \to J^k(\overline{U} ,\bbR)$$
$$
z \mapsto j^k(f \circ \phi^1_{H})_{z},
$$
is transverse to $\Sigma_{\vec V}$. Since $k\geq 2n$, we get $\dim(\bbR^{2n}) < k+1 =  \codim(\Sigma_{\vec V})$. Then, transversality implies that the image of $\overline{U}$ under $z \mapsto j^k(f \circ \phi^1_{H})_{z}$ is actually disjoint from $\Sigma_{\vec V}$, which is the desired property.

 It remains to check that $\Psi$ is a submersion for all $H$ in some neighborhood of $0$ in $\mathrm{Pol}_{k+1}$.  Since being a submersion is an open property, and $\overline U$ is compact, it is enough to check that the mapping $\Psi$ is a submersion when $H=0$.  In the computations below we will identify   $J^k(\overline{U},\bb R) = \overline{U} \times \mathrm{Pol}_{k}$. Furthermore, we will also identify  the tangent spaces to points of $\overline{U} \times \mathrm{Pol}_{k}$ and $\overline{U} \times \mathrm{Pol}_{k+1},$ with $\bb R^{2n} \times \mathrm{Pol}_{k}$ and $\bb R^{2n} \times \mathrm{Pol}_{k+1}$, respectively.
 
  We would like to compute $D \Psi_{(z,0)} (u, G)$ where $u$ is a vector in $\overline{U} $ and $G \in \mathrm{Pol}_{k+1}$.  It is easy to see that $D \Psi_{(z,0)} (u, 0) = (u,0)$.  Hence, we will only consider the case where $u=0$.  Now, we compute: 
  $$D \Psi_{(z,0)} (0, G) = \frac{\partial}{\partial t}|_{t=0}\; \Psi(z, tG) = \frac{\partial}{\partial t}|_{t=0}\; j^k(f \circ \phi^t_{G})_{z}$$ 
  
  $$ = j^k(\frac{\partial}{\partial t}|_{t=0}\; f \circ \phi^t_{G})_{z} = j^k(\{f, G\})_{z},$$
  where we have used the fact that $\frac{\partial}{\partial t}|_{t=0}\; f \circ \phi^t_{G} = \{f, G\}$.  Here,  $\{f, G \}$ denotes the Poisson bracket of $f$ and $G$ and it is given by $\{f, G \} = \sum_{i=1}^{2n} \frac{\partial f}{\partial x_i} \frac{\partial G}{\partial y_i} - \frac{\partial f}{\partial y_i} \frac{\partial G}{\partial x_i}$.  Recall that we picked $f=x_n$ and so $\{f, G \} = \frac{\partial G}{\partial y_n}$.  Hence, we have established 
   $$D \Psi_{(z,0)} (0, G) =  j^k \left( \frac{\partial G}{\partial y_n} \right)_{z}.$$ 
   
   Now, for any given polynomial $P \in \mathrm{Pol}_{k}$ let $G = y_n P$ and note that $j^k \left( \frac{\partial G}{\partial y_n} \right)_{z} = P.$  We conclude that $D \Psi_{(z,0)}$ is surjective, for any $z \in \overline{U}$, and so $\Psi$ is a submersion at $(z, 0)$ for all $z \in \overline{U}$.  
\end{proof}

\subsubsection*{(b) Proof of Lemma \ref{lemma.thickening}: Thickening Lemma}
As before, we will denote by $t \mapsto S_{t}$ the action of the circle by homeomorphisms on $M$. Fix $\varepsilon > 0$ and let $x \in X$.  Since $
 \mathrm{Leb}_{\bbS^1} (\{t \in \bbS^1: S_{t}(x) \in X \}) = 0,$ we can find a compact subset $I_{x} \subset \bb S^1$, whose Lebesgue measure is more than $1-\varepsilon$, and such that for every $t \in I_{x}$, we have $S_{t}(x) \not \in X$.   Compactness of $I_{x}$ and $X$ implies that there is some $\delta_{x}>0$ such that $d(S_{t}(x), X) > \delta_{x}$ for every $t \in I_{x}$.   Observe that for every $x \in X$, we can find a neighborhood $V_{x}$ of $x$ in $M'$ such that we still have $d(S_{t}(y), X) > \delta_{x}$ for $t \in I_{x}$ and $y \in V_{x}$.  Since $X$ is compact, we can find $V_{x_1}, \ldots, V_{x_m}$ such that the finite union $V:= \cup_i V_{x_i}$ covers $X$.  Pick $0 < \delta$,  less than each $\delta_{x_{i}}$, such that $V_{\delta}(X)$, the $\delta$-neighborhood of $X$, is contained in $V$.  Observe that we have proven the following statement: for every $y \in V$ 
  $$\mathrm{Leb}(\{t \in \bb S^1:  S_t(y) \in V_{\delta}(X)\}) < \varepsilon  .$$
  
The above inequality also holds for any point $y$ whose orbit under the action meets $V$, since its orbit coincides with the orbit of a point in $V$.  On the other hand, if $y$ is a point in $M'$ whose orbit does not meet $V$, then $$\mathrm{Leb}(\{t \in \bb S^1:  S_t(y) \in V_{\delta}(X)) = 0.$$
This completes the proof.


\subsubsection*{(c) Proof of Lemma \ref{lemm:stability}: Stability Lemma}
If $\Phi$ is uniformly close to the identity then the orbits of the conjugated action are uniformly close to the orbits of the action $S$. On the other hand the hypotheses entail that the distance from $K_{1}$ to the complement of $K_{2}$ is positive. The lemma follows.

\subsubsection*{(d) Proof of Lemma \ref{lemm:transport_lemma}: Transportation Lemma}
We begin by proving the first statement of the lemma, that is there exist $\Psi \in \Symp_0(M'',\omega)$ and $\delta_0 >0$ such that $\Psi \phi_{1} = \phi_{2}$ on $[-\delta_0,\delta_{0}]^{2n}$.  

Note that since $\Symp_{0}(M'')$ acts transitively on $M''$ we can suppose that $\phi_{1}(0)=\phi_{2}(0)$. Let $U$ be a Darboux chart centered at $\phi_{1}(0)=\phi_{2}(0)$. By replacing $P$ with a smaller polytope, we may suppose that the image of $P$ under $\phi_1, \phi_2$ is contained in the Darboux chart $U$.  This allows us to reduce the problem to the following setting:  $M'' = \bb R^{2n}$ and $\phi_1, \phi_2$ are symplectic embeddings of a polydisc $P = [-a, a]^{2n}$ such that $\phi_1(0) = \phi_2(0) = 0$. 
We must find $\delta_0 >0$ and a symplectic isotopy $\Psi^t$, which is compactly supported in $U$, with the property that  $\Psi^0 = \Id$ and $\Psi^1 = \phi_2 \phi_1^{-1}$ on $[-\delta_0,\delta_{0}]^{2n}$.

Pick any $0 <b <a$ and let $P' = [-b, b]^{2n}$.  By the well-known ``extension after restriction principle" \cite{Ekeland-Hofer}, we can find a compactly supported symplectomorphism $\psi$ of $\bb R^{2n}$ such that $\psi(0) = 0$ and $\psi|_{P'} = \phi_2 \phi_1^{-1}|_{P'}$.  There exists a symplectic isotopy $(\Psi^t)_{t \in [0,1]}$, compactly supported in $\bbR^{2n}$, such that $\Psi^{0} = \mathrm{Id}$, 
$\Psi^t(0)=0$ for every $t \in [0,1]$,
and $\Psi^1 = \psi$; for an explanation see~\cite{Schlenk}, proof of Proposition~1.7. There exists $\delta_0 < b$  such that 
$$
K := \bigcup_{t \in [0,1]} \Psi^t ([- \delta_0 ,\delta_0]^{2n}) \subset U.
$$

Let $H$ be the Hamiltonian whose flow is $\Psi^t$.  By cutting off the Hamiltonian $H$ in a neighborhood of the set $K$, we obtain a new Hamiltonian $G$ which is supported in $U$ and has the property that $\phi^1_G = \psi$ on  $[- \delta_0 ,\delta_0]^{2n}$.  We set $\Psi = \phi^1_G$.  This completes the proof of the first part of the lemma.

For the second part of Lemma \ref{lemm:transport_lemma}, note that we can find a compactly supported symplectomorphism $\theta \in \Symp_{0}(M'')$ which exchanges $\phi_{1}(0)$ and $\phi_{2}(0)$. Now,  if we apply the above construction, which is local, independently once near $\phi_{1}(0)$ and another time near $\phi_{2}(0)$ we will obtain $\Psi_1, \Psi_2$ supported near $\phi_{1}(0), \phi_{2}(0)$, respectively, such that $\Psi_1 \theta \phi_2 = \phi_1$ and $\Psi_2 \theta \phi_1 = \phi_2$ on $[-\delta_0, \delta_0]^{2n}$ for some $\delta_0 > 0$.  We let $\Psi = \Psi_1 \Psi_2 \theta$.
\section{Proof of Proposition~\ref{prop.main}}\label{sec:proof_main_prop}

For the rest of the paper $(M, \omega)$ will denote  a closed toric symplectic manifold with moment map $\mu : M \rightarrow \Delta$.  We fix a locally free Hamiltonian circle action $S$ obtained as described via Equation \eqref{eq:locally_free_action}.
We consider an integer $q \in \cQ$ and a small positive $\varepsilon \in \bbR$.   We fix a Riemannian metric on $M$ and denote
$$
B_{\varepsilon}(\Fix(S)) = \bigcup_{x \in \Fix(S)}B_{\varepsilon}(x),
$$
where $B_{\varepsilon}(x)$ is the ball of radius $\varepsilon$ around $x$.

\medskip

Before giving the proof of Proposition \ref{prop.main} in detail, we outline here the main ideas of the proof.  Our goal is to construct a Hamiltonian diffeomorphism $h$ satisfying the three requirements of Proposition \ref{prop.main}.  To ensure that the first two properties are satisfied, we will carry out the construction on a compact subset of the quotient $(M \setminus \Fix(S)) / S_{\frac{1}{q}}$.  Recall that $q$ being in $\cQ$ guarantees that this quotient is a smooth symplectic manifold; see Equation \eqref{eq:def_cQ}.

The difficult part of our task is to ensure that the third property in Proposition \ref{prop.main} is satisfied. To achieve this,  we will first construct, in Lemma \ref{lemm.equidistributionboxes}, a collection of closed sets denoted by $A_1, \ldots, A_N$ satisfying the following equidistribution property: 
\begin{enumerate}
\item[] \emph{The sets $A_{i}$ have diameters  less than $\varepsilon$, their interiors are disjoint, their volumes are equal, their boundaries are of zero volume and 
$$
 \Fix(S) \subset W: = \left( M \setminus \bigcup_{i} A_{i} \right) \subset B_{\varepsilon}(\Fix(S)).
$$}
\end{enumerate}
We will refer to the $A_i$'s as the {\it equidistribution boxes}.  Then, we will construct $h$ such that for each  $x \in M$, the orbit $hS_th^{-1}(x)$ under the conjugated action spends roughly the same amount of time in each of the $A_i$'s: for all $i,j$ we will have $$ \mathrm{Leb}(\{t\in \bbS^1: h S_{t} h^{-1}(x) \in A_i\}) \approx \mathrm{Leb} (\{t\in \bbS^1: h S_{t} h^{-1}(x) \in A_j\}).$$
 A more precise version of the above statement, along with the proof of the fact that it implies the 3rd item in Proposition \ref{prop.main}, is given  in Proposition \ref{prop.main2}.

\smallskip 

To construct $h$ we fix $\varepsilon'>0$ and we will build a collection of disjoint, symplectomorphic polydiscs $c_k$ with the following two critical properties:
\begin{enumerate}
\item[i.] There exists a $C^{\infty}$-small symplectomorphism $\Psi$, commuting with $S_{\frac{1}{q}}$, such that for each  $x \in M$ its orbit $\Psi S_t \Psi^{-1}(x)$, under the conjugated action  will spend more time than $1-\varepsilon'$  in $W \cup_{k} c_{k} $: 
 $$ \mathrm{Leb}(\{t\in \bbS^1: \Psi S_{t} \Psi^{-1}(x) \in W \cup_{k} c_{k}  \}) > 1 -\varepsilon'.$$

\item[ii.]  There exists a symplectomorphism $\Theta$, commuting with $S_{\frac{1}{q}}$, which equidistributes the small boxes $c_k$ among the equidistribution boxes $A_i$.
\end{enumerate}
The symplectomorphism $h$ will then be the composition $\Theta \circ \Psi$.  The construction of the small boxes and the symplectomorphism $\Psi$ is carried out in Lemma \ref{lemm.smallboxes}.  The symplectomorphism $\Theta$ is constructed in Claim \ref{cl:distributing_small_boxes} in  the course of the proof of Proposition \ref{prop.main2}.

\smallskip

We should mention that, in general, there exist symplectic obstructions to finding a symplectomorphism equidistributing a given collection of polydiscs; see for example the symplectic camel problem in Section 1.2 of \cite{McDuff-Salamon}.  To avoid such obstruction, the small boxes $c_k$ must be picked to be sufficiently small.  The details of the construction requires the introduction of a third collection of polydiscs $B_j$ of intermediate size which will be referred to as the transportation boxes; see Lemma \ref{lemm.bigboxes}.

\subsection{Equidistribution boxes} \label{sec:equidistribution}
The goal of this section is to construct the equidistribution boxes mentioned above and prove that they satisfy the properties stated in the lemma below.  The mapping $\Xi : \Delta \times \bbT^n  \rightarrow M$ in the statement below is as in Equation \eqref{eq:section_moment_polytope}.  By saying that a map $s$ \emph{acts as a permutation by $k$-cycles} on a set $E$ we mean that $s$ permutes the elements of $E$, and every orbit of this permutation has cardinality $k$.

\begin{lemma}[Equidistribution Boxes]
\label{lemm.equidistributionboxes}
Let $q \in \cQ$ and $\varepsilon>0$.
There exist $N \in \cQ$ and closed subsets $A_{1}, \dots, A_{N}$ of $M\setminus\Fix(S)$ such that:

\begin{enumerate}
\item (Equidistribution Property) The sets $A_{i}$ have diameters  less than $\varepsilon$, their interiors are disjoint, their volumes are equal, their boundaries are of zero volume, and 
$$
W: = \left( M \setminus \bigcup_{i} A_{i} \right) \ \subset \ B_{\varepsilon}(\Fix(S)).
$$

\item $\bigcup_{i} A_{i}$ is invariant under the circle action $S_t$.

\item $S_{\frac{1}{q}}$ acts on the set $\{A_1, \ldots, A_N\} $ as a permutation by $q$-cycles.

\item (Action-Angle Coordinates)  For each $A_{i}$, we have $$ A_i = \Xi(P \times T),$$ where $\Xi$ is the map defined in Equation \eqref{eq:section_moment_polytope}, $P$ is some polytope included in the moment polytope $\Delta$, and $T$ is a cube in the torus $\bbT^n$ obtained from a subdivision of $\bbT^n$ into equal cubes.  The map $\Xi$ defines a symplectomorphism from $\Int(P\times T)$ to $\Int(A_i)$. 
 
%
%

%
%
%
\end{enumerate}
\end{lemma}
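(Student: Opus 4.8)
The plan is to build the $A_i$'s in action-angle coordinates, using the section $\sigma : \Delta \to M$ and the symplectomorphism $\Xi : \Int(\Delta) \times \bbT^n \to \mu^{-1}(\Int(\Delta))$ from Equation~\eqref{eq:section_moment_polytope}. First I would fix a polytope $P \subset \Int(\Delta)$, close enough to $\Delta$ in the Hausdorff sense that $\mu^{-1}(\Delta \setminus P)$ is contained in a small neighborhood of $\mu^{-1}(\partial\Delta)$; since $\mu^{-1}(\partial\Delta)$ contains $\Fix(S)$ and $\Fix(S)$ is exactly the set of vertices' preimages, and since the faces are submanifolds, one checks that $\mu^{-1}(\Delta\setminus P)$ shrinks down onto $\mu^{-1}(\partial\Delta)$ as $P \uparrow \Delta$. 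To handle the ``$W \subset B_\varepsilon(\Fix(S))$'' requirement I would additionally excise from $P$ a neighborhood of each face of positive codimension $>$ the vertices — more carefully, remove a small collar of $\partial P$ — so that what remains of $M$ outside the boxes lies in $B_\varepsilon(\Fix(S))$. Actually the cleanest route is: choose $P$ to be $\Delta$ with a thin closed collar of $\partial\Delta$ removed, so $\mu^{-1}(\Delta \setminus P)$ is an arbitrarily small neighborhood of $\mu^{-1}(\partial\Delta)$; this last set deformation retracts onto the lower-dimensional strata, but it is not contained in $B_\varepsilon(\Fix(S))$ in general, since $\mu^{-1}(\partial\Delta)$ is large. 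This is the point that needs real care, and I return to it below.

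Granting a good choice of $P$, the construction of the boxes themselves is straightforward. Subdivide $\bbT^n = \bbR^n/\bbZ^n$ into $m^n$ equal cubes $T_1,\dots,T_{m^n}$ of side $1/m$, and subdivide $P$ into finitely many subpolytopes $P_1,\dots,P_r$ of equal volume and small diameter (this is possible for any convex polytope: cut it into simplices, then subdivide each simplex barycentrically or by a fine grid, and merge pieces to equalize volumes — since we only need equal volumes, not congruence, and the pieces can be taken of arbitrarily small diameter, this is elementary). Then set $A_{(j,k)} := \Xi(P_j \times T_k)$. Because $\Xi$ restricted to $\Int(\Delta)\times\bbT^n$ is a symplectomorphism and preserves the symplectic volume form (it is built from the Liouville volume $\frac{1}{(2\pi)^n}\prod d\mu_i\wedge d\theta_i$, which pushes forward to $\omega^n$), all the $A_{(j,k)}$ have equal volume, disjoint interiors, boundaries of measure zero (being images under a diffeomorphism of the measure-zero set $\partial(P_j\times T_k)$), and diameters $<\varepsilon$ provided $m$ is large and the $P_j$ are fine enough and $\sigma$ is uniformly continuous on the compact set $P$. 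Item (4) is then immediate from the definition. For item (2), $\bigcup_{j,k}A_{(j,k)} = \Xi(P \times \bbT^n) = \mu^{-1}(P)$, which is manifestly $\bbT^n$-invariant, hence $S_t$-invariant; and $W = M\setminus\mu^{-1}(P) = \mu^{-1}(\Delta\setminus P)$. For item (3): the circle action $S$ acts on the angle coordinate $\theta$ by translation by $tZ$ (mod $\bbZ^n$), so $S_{1/q}$ translates $\theta$ by $Z/q$; choosing $m$ to be a multiple of $q$ and compatible with the direction $Z$, this translation permutes the cubes $T_k$, and by the density/primitivity of $Z$ and the relative-primality built into $\cQ$ one arranges that the induced permutation on the $T_k$ (and hence on the $A_{(j,k)}$, since $S_{1/q}$ fixes the $P_j$ coordinate) is a product of $q$-cycles; relabeling, $N := r\cdot m^n$ is a multiple of $q$, and one checks $N \in \cQ$ by taking $r$ and $m$ with no common factor with $q_0$. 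The $q$-cycle claim is exactly the statement that $Z/q$ has order $q$ in $(\frac{1}{m}\bbZ/\bbZ)^n$ acting by translation, which holds since $Z$ is primitive and $q \mid m$.

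The main obstacle is reconciling item (1)'s two clauses: the boxes must have diameter $<\varepsilon$ \emph{and} their complement $W$ must sit inside $B_\varepsilon(\Fix(S))$. The naive choice $W = \mu^{-1}(\Delta\setminus P)$ fails the second clause because $\mu^{-1}$ of a neighborhood of $\partial\Delta$ is far from $\Fix(S)$. The fix is that we should \emph{not} take the $A_i$ to cover all of $\mu^{-1}(P)$ by genuine product boxes; rather, near each boundary face $F$ of $\Delta$ of codimension $p$, we use the local normal form for the torus action (a neighborhood of $\mu^{-1}(F)$ looks like $\mu^{-1}(F)$ times a model symplectic $\bbC^p$ with its rotation action) to build boxes adapted to that stratum — in the transverse $\bbC^p$ factor one still has radial coordinates, and one cuts that factor into finitely many small pieces down to radius $\to 0$, i.e. down to $\mu^{-1}(F)$ itself. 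Iterating over all faces from top-dimensional down to the vertices, one covers $M$ minus a genuinely small neighborhood of $\Fix(S)$ (the vertices) by small boxes, each of which is still of the form $\Xi(P\times T)$ in suitable coordinates (where now $P$ may be a polytope abutting a face of $\partial\Delta$, and the $\Xi$-symplectomorphism statement in item (4) is understood on the interior). Concretely: induct on the codimension; at each stratum the leftover region that is not yet boxed is a neighborhood of the next-lower stratum, and at the end only a neighborhood of the vertices remains, which is $B_\varepsilon(\Fix(S))$ for $\varepsilon$ large enough relative to the chosen collars — and these collars can be taken as small as we like, so ``$W \subset B_\varepsilon(\Fix(S))$'' holds. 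Keeping the $S_{1/q}$-equivariance and the $q$-cycle structure through this stratified construction requires choosing all subdivisions invariant under translation by $Z/q$ in the angular directions, which is possible exactly because $q\in\cQ$ makes the $\bbZ/q$-action free on $M\setminus\Fix(S)$; this is the technical bookkeeping that I expect to occupy most of the written proof.
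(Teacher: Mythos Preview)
Your overall architecture (subdivide $\Delta$ into equal-volume polytopes, subdivide $\bbT^n$ into equal cubes, take $\Xi$-images of products) matches the paper's. You also correctly identify the one real difficulty: if the polytopes $P_j$ stay in $\Int(\Delta)$, then $W = \mu^{-1}(\Delta\setminus\bigcup P_j)$ is a neighborhood of $\mu^{-1}(\partial\Delta)$, which is far from $B_\varepsilon(\Fix(S))$.

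Where you diverge is in the fix. You propose a stratified induction using local normal forms near each face, covering $M$ down to a neighborhood of the vertices. The paper's route is much shorter: since the section $\sigma$ is defined and continuous on \emph{all} of $\Delta$ (not just $\Int(\Delta)$), the map $\Xi$ makes sense on $P\times T$ even when the polytope $P$ touches $\partial\Delta$. So one simply subdivides $\Delta$ itself into small equal-volume polytopes and discards only those polytopes that contain a \emph{vertex} of $\Delta$; the remaining $\Xi(P_j\times T_k)$ are the boxes, and since the discarded polytopes have diameter $<\varepsilon$, the leftover $W$ sits in $B_\varepsilon(\Fix(S))$ directly. The paper flags this explicitly: ``for this it is crucial that our global section $\sigma$ is defined on $\Delta$ and not only on $\Int(\Delta)$.'' For item~(4), note that if $P_j$ is a full-dimensional polytope in the subdivision then $\Int(P_j)\subset\Int(\Delta)$ even when $P_j$ has a face on $\partial\Delta$, so $\Xi$ is a genuine symplectomorphism on $\Int(P_j\times T_k)$.

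Your stratified approach is plausible in spirit, but it carries a real cost beyond length: boxes built from local normal forms near faces are not a priori of the form $\Xi(P\times T)$ for the \emph{global} $\Xi$ of Equation~\eqref{eq:section_moment_polytope}, so item~(4) as stated would not follow without further work reconciling your local charts with $\Xi$. That reconciliation essentially amounts to rediscovering that the global $\Xi$ already handles boundary-touching polytopes --- at which point the induction becomes superfluous. (Your closing parenthetical ``$P$ may be a polytope abutting a face of $\partial\Delta$'' is in fact the whole story; no normal forms are needed to make sense of $\Xi(P\times T)$ there.)

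One small correction on item~(3): you want $N$ coprime to $q_0$, and $m$ a multiple of $q$; the paper takes the torus side-length to be $\tfrac{1}{N'q}$ with $N'$ coprime to $q_0$ and the number $N''$ of base polytopes also coprime to $q_0$, so that $N = N''(N'q)^n\in\cQ$. Your ``$m$ with no common factor with $q_0$'' is the same constraint, but be sure the multiple-of-$q$ and coprime-to-$q_0$ conditions are imposed simultaneously.
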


\begin{proof}

To construct the $A_i$'s, we begin by subdividing  the moment polytope $\Delta$ into a collection of polytopes  of small diameter and equal volume.  By subdividing we mean that  two distinct polytopes can only intersect at lower dimensional faces and their union covers $\Delta$.  Now, let $\mathcal{P} = \{P\}$ be the set consisting of those polytopes  from the subdivision which do not contain any of the vertices of $\Delta$.    Let $N''$ denote the total number of the polytopes in $\mathcal{P}$. The subdivision may be carried in such a way that $N''$ is relatively prime  to $q_{0}$ and so $N'' \in \cQ$ (see equation~\ref{eq:def_cQ}).

We consider a decomposition of the torus, which is invariant under translation by $S_{\frac{1}{q}}$, into equal cubes $T$.   More precisely, we obtain these cubes by subdividing each $\bb S^1$ factor of $\bb T^n$ into $N'q$ subintervals of equal length, where $N'$ is picked to be relatively prime to $q_{0}$. 
Hence, the cubes $T$ are all cubes in $\bb T^n$ of the form 
$$v+ \left[0, \frac{1}{N'q}\right]^n, $$  
where $v \in \frac{1}{N'q} \bb Z^n$.
Note that the total number of cubes $T$ is $(N'q)^{n}$.    The fact that this decomposition of $\bb T^n$ into the cubes $T$ is invariant under translation by $S_{\frac{1}{q}}$ follows from Equation \eqref{eq:locally_free_action}.\begin{figure}[h]
\centering
\def\svgwidth{\textwidth}
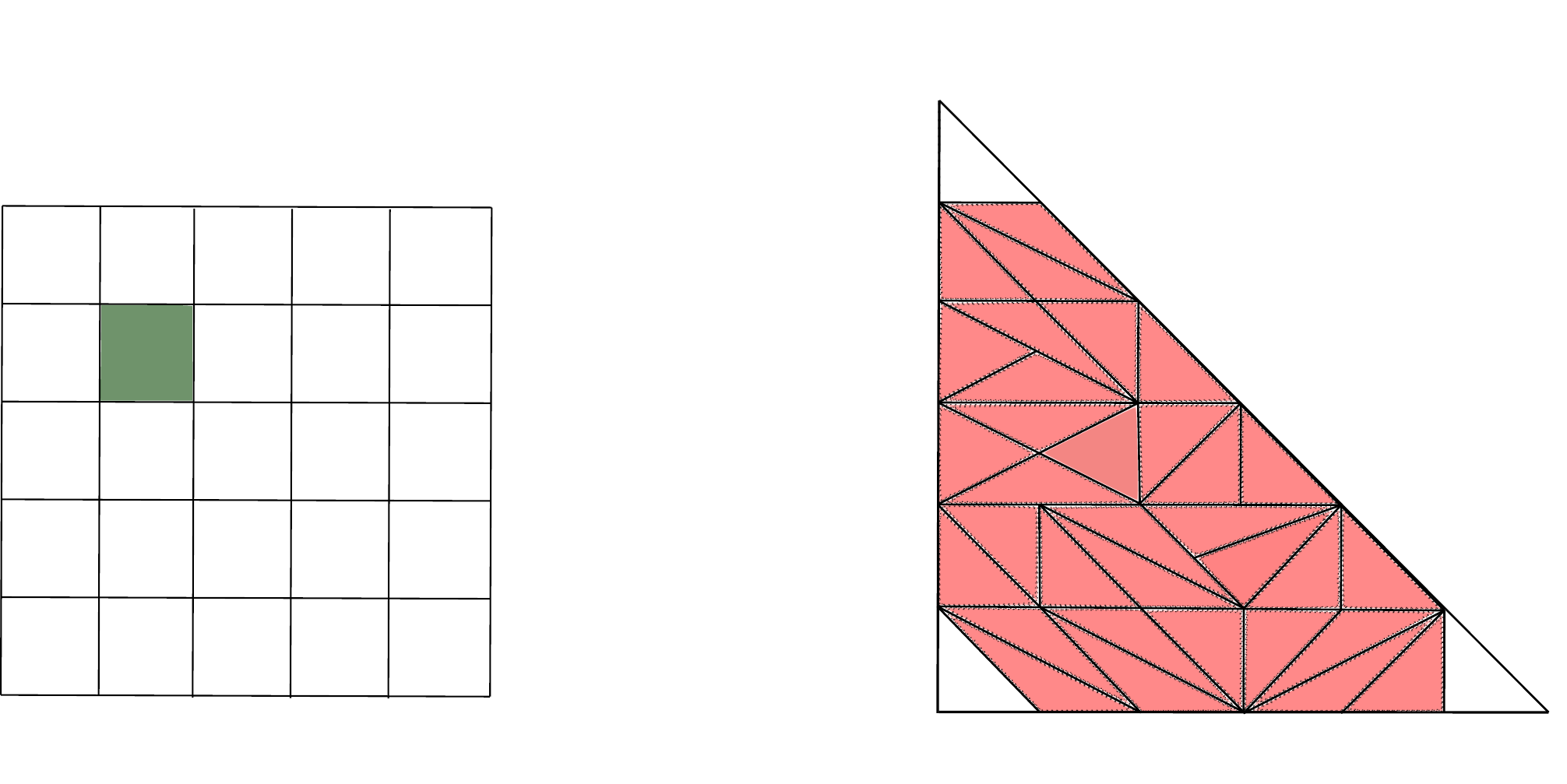
\caption{ Depiction of the equidistribution boxes $A_i$ in the case of $\mathbb{C}P^2$:  On the left, a subdivision of $\mathbb T^2$ into small equal cubes $T$ with one such cube colored in green.  On the right, a sample subdivision of the moment polytope of $\mathbb C P^2$ into small poltytopes of equal volume.  The collection $\mathcal{P} = \{P\}$ consists of the polytopes colored in pink.    The sets $A_i$ have equal volumes. }
\end{figure}

Finally, we obtain the $A_i$'s by considering  the images under $\Xi$ of all the products $P \times T$ of the polytopes $P \in \mathcal{P}$ and the cubes $T$.   Observe that the total number of the $A_i$'s is $N= N'' (N'q)^n$ which belongs to $\cal Q$. Also note that, since $\Xi$ is a symplectomorphism, the volume of each $A_{i}$ equals the product of the Haar volume of $T$ with the standard volume of $P$, and thus they are all equal.

By picking the subdivisions of $\Delta$ and $\bb T^n$ into polytopes and cubes, respectively, to be sufficiently fine we can ensure that the $A_i$'s are of diameter less than $\varepsilon$ and that $W$ is contained $ B_{\varepsilon}(\Fix(S))$ (for this it is crucial that our global section $\sigma$ is defined on $\Delta$ and not only on $\mathrm{Int} (\Delta)$).
 It is not difficult to check, from the construction, that the $A_i$'s satisfy all the remaining properties.
\end{proof}

The following observation will be used below. As we recalled in Section \ref{sec:prelim}, the inverse images of the faces of the moment polytope are submanifolds of $M$.  Thus, by point 4 of the lemma, each face of each $A_{i}$ is compactly included in an open submanifold of $M$.

\subsection{Transportation boxes}  \label{sec:transportation}
Let  $B' \subset B$ be two polydiscs in $M$.   We say $B' $ is a \emph{sub-polydisc} of $B$,  of the form   
$$\prod_{i=1}^n [a_i', b_i'] \times [c_i', d_i'] \subset \prod_{i=1}^n [a_i, b_i] \times [c_i, d_i],$$
 if there exists a symplectic embedding which maps $ \Pi_{i=1}^n [a_i, b_i] \times [c_i, d_i]$ to $B$ and $\Pi_{i=1}^n [a_i', b_i'] \times [c_i', d_i']$ to $B'$. 
In the statement below, $A_{1}, \dots, A_{N}$ are the equidistribution boxes given by Lemma \ref{lemm.equidistributionboxes} and   $W = M \setminus \cup A_{i} $; in particular $N$ denotes the number of equidistribution boxes (these data depend on $q\in \cQ$ and $\varepsilon>0$).

\medskip

Roughly speaking, the aim of the following lemma is to cover most of the interior of the $A_{i}$'s by disjoint polydiscs $B'_{j}$ so that

\begin{enumerate}
\item The $B'_{j}$'s are symplectomorphic, and each $A_{j}$ contains the same number of $B'_{j}$'s,
\item The collection $\{B_{j}'\}$ is invariant under $S_{\frac{1}{Nq}}$, 
\item There exists a slightly perturbed circle action each of whose orbits spends most of its time inside the union of these polydiscs.
\end{enumerate}

The number $\varepsilon$ was used to ensure that the boxes $A_{i}$ are well distributed in $M$. We will use a different number, denoted by $\varepsilon'$, to ensure that every orbit is well distributed among the $A_{i}$'s (this will become clear in Proposition~\ref{prop.main2}).

\begin{lemma}[Transportation Boxes]
\label{lemm.bigboxes}
Let $q \in \cQ$ and $\varepsilon>0$ be as in Lemma~\ref{lemm.equidistributionboxes}, and let $\varepsilon'>0$.
There exist two families of polydiscs $ B_1' \subset B_1,  \ldots, B_{N_1}' \subset B_{N_1}$,  and a map  $\Psi_1 \in \Symp_0(M, \omega)$ such that the following properties are satisfied:

\begin{enumerate}

\item The polydiscs $B_i$ are all symplectomorphic to the standard polydisc $[-r, r]^{2n}$ for some $r>0$, have disjoint interiors, each $B_i$ is included in the interior of some $A_j$, and each $A_j$ contains the same number of $B_i$'s,

\item $B_i'$ is a sub-polydisc of $B_i$ of the form   $[-r', r']^{2n} \subset [-r, r]^{2n}$, where $r' < r$,

\item  $S_{\frac{1}{Nq}}$ acts on the $B_{i}$'s and  on the $B_{i}'$'s  as a permutation by  $Nq$-cycles, 

\item $\Psi_1$ is $C^{\infty}$--close to the identity, its support is disjoint from $\Fix(S)$, and it commutes with $S_{\frac{1}{Nq}}$,

\item There exists a compact subset $K$ of $\Int\left(\cup_{i} B_{i}'\right)$ such that every orbit of the conjugated circle action $\Psi_{1} S \Psi_{1}^{-1}$  spends more time than $1-\varepsilon'$  in $K \cup W$.
\end{enumerate}
\end{lemma}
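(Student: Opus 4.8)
The plan is to work inside a single equidistribution box, build one family of polydiscs there, and then spread the family around by the circle actions $S_{1/q}$ and $S_{1/(Nq)}$ to achieve the required equivariance. By point 4 of Lemma \ref{lemm.equidistributionboxes}, we may fix an identification $A_{i_0} = \Xi(P \times T)$ with action-angle coordinates, so that on $\Int(A_{i_0})$ the circle action $S$ is (a reparametrization of) a linear translation in the torus coordinates. First I would choose a small radius $r>0$ and a finite collection of pairwise disjoint standard polydiscs $[-r,r]^{2n}$ inside $\Int(P \times T)$ whose union, after removing a neighborhood of the boundary strata, leaves out only a set on which each orbit of the linear model spends total time less than $\varepsilon'$; concretely, one slices $\Int(P\times T)$ into thin ``tubes'' roughly parallel to the flow direction of $\vec V$ and places a chain of polydiscs along each tube, so that the complement of the union of polydiscs within $A_{i_0}$ is contained in a thin neighborhood of a codimension-$1$ ``transfer hypersurface'' plus a neighborhood of $\partial(P\times T)$. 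Here one invokes the Thickening Lemma (Lemma \ref{lemma.thickening}) together with the fact, noted just after Lemma \ref{lemm.equidistributionboxes}, that each face of each $A_i$ is compactly contained in an open submanifold of $M$: an orbit spends zero time on that hypersurface, hence less than any prescribed amount in a small enough neighborhood. Then define $B_i' \subset B_i$ to be the concentric sub-polydisc $[-r',r']^{2n} \subset [-r,r]^{2n}$, with $r'<r$ chosen close enough to $r$ that the orbit-time lost by shrinking each $B_i$ to $B_i'$ is also controlled by $\varepsilon'$; this is again a thickening-type estimate.

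Next I would propagate this configuration. Since $S_{1/q}$ permutes the $A_i$'s in $q$-cycles (point 3 of Lemma \ref{lemm.equidistributionboxes}) and $S_{1/(Nq)}$ acts within each cycle, I would take the union of the $S_{k/(Nq)}$-translates, $k = 0, \dots, Nq-1$, of the polydiscs built in $A_{i_0}$, one such translate landing in each $A_i$ (choosing the subdivision of $A_{i_0}$ fine enough and placed so that translates in the same $A_i$ remain pairwise disjoint — this is possible because the $S_{1/(Nq)}$-translates of the cube $T$ are either equal or interior-disjoint by construction). This automatically gives properties 1, 2, 3: the $B_i$ are all symplectomorphic (translates of the same standard polydisc under the symplectic maps $\Xi$ and $S_t$), have disjoint interiors, are equidistributed among the $A_j$, each $B_i'$ is the prescribed sub-polydisc of $B_i$, and $S_{1/(Nq)}$ acts by a single $Nq$-cycle on both collections. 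Disjointness from $\Fix(S)$ is automatic since everything lives inside $\cup A_i \subset M \setminus \Fix(S)$.

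It remains to produce the perturbation $\Psi_1$ and establish properties 4 and 5. The obstacle is that the complement of $\cup_i B_i'$ inside $M$ consists of (a) $W$, which is harmless, (b) thin neighborhoods of the boundary faces of the $A_i$, and (c) the thin ``transfer'' neighborhoods within each $A_i$ where an orbit, in the \emph{unperturbed} model, still spends a genuinely positive amount of time, because an orbit entering $A_i$ must cross from one end to the other and thus repeatedly pass through the region not covered by polydiscs near the subdivision walls of $T$. The fix is exactly the Transversality Lemma (Lemma \ref{lemma.transversality}): apply it to the codimension-$1$ walls of the cubes $T$ (equivalently, to the finitely many hypersurfaces bounding the uncovered region) and to the vector field $\vec V$, inside a small neighborhood disjoint from $\Fix(S)$, to obtain a $C^\infty$-small symplectomorphism $\Psi_1$ so that, for the conjugated action $\Psi_1 S \Psi_1^{-1}$, every orbit meets these hypersurfaces in an isolated, hence finite, set of points, and the same holds stably under further small perturbations. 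For such an orbit the time spent in a fixed small neighborhood of the uncovered hypersurfaces tends to $0$ as the neighborhood shrinks (Thickening Lemma again, applied now to the \emph{perturbed} action, which is legitimate since ``almost transverse'' orbits spend zero time on the hypersurface). Choosing first the polydisc configuration fine, then $\Psi_1$ via transversality, then the compact set $K := \cup_i B_i'$ minus a sufficiently thin collar, one arranges that every orbit of $\Psi_1 S \Psi_1^{-1}$ spends more than $1-\varepsilon'$ of its time in $K \cup W$. To get the equivariance $\Psi_1 S_{1/(Nq)} = S_{1/(Nq)} \Psi_1$ demanded in property 4, I would perform the transversality perturbation on the quotient $(M \setminus \Fix(S))/S_{1/(Nq)}$ — which is a smooth symplectic manifold since $Nq \in \cQ$ — and lift it back up, exactly as in the general scheme of Section \ref{sec:general_scheme}; the resulting $\Psi_1$ is then automatically $S_{1/(Nq)}$-equivariant and $C^\infty$-small. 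I expect the main difficulty to be the bookkeeping that makes the three ``thin set'' estimates (boundary collars of $A_i$, the transfer region, and the $r'<r$ shrinkage) simultaneously smaller than a single $\varepsilon'$ while keeping all the equivariance and disjointness constraints intact; the conceptual content is entirely carried by Lemmas \ref{lemma.transversality} and \ref{lemma.thickening}.
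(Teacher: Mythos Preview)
Your overall strategy---combine the Transversality and Thickening Lemmas to control the time orbits spend outside the polydiscs, and pass to the quotient by $S_{1/(Nq)}$ to enforce equivariance---is the right one and matches the paper. However, your propagation step contains a genuine gap.

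You build the polydiscs inside a single $A_{i_0}$ and then spread them by $S_{k/(Nq)}$, asserting that ``one such translate land[s] in each $A_i$''. This is false on two counts. First, $S_{1/(Nq)}$ does not permute the equidistribution boxes at all: the cubes $T$ in Lemma~\ref{lemm.equidistributionboxes} have side $1/(N'q)$, while $Nq = N''(N'q)^n q$, so the translation by $Z/(Nq)$ is generically \emph{not} a lattice point of that cube decomposition, and $S_{1/(Nq)}(A_{i_0})$ is not among the $A_i$'s. Second, and more fundamentally, the circle action $S_t$ moves only the torus coordinate and fixes the polytope coordinate; hence no iterate of $S_{1/(Nq)}$ can carry a polydisc in $\Xi(P_0\times T)$ into $\Xi(P_1\times T')$ for a different polytope $P_1$. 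Since the subdivision of $\Delta$ involves $N''>1$ polytopes of equal volume but different shapes, your construction populates only the $A_i$'s with polytope component $P_0$ and leaves all others empty, violating item~1.

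The paper resolves both problems at once. It first refines the torus decomposition to smaller boxes $a_j$ that \emph{are} permuted by $S_{1/(Nq)}$, passes to the quotient $M'=(M\setminus\Fix(S))/S_{1/(Nq)}$, and builds the polydiscs $b_i$ \emph{independently} inside each $\pi(a_j)$. The differing polytope shapes are handled by a separate counting argument (Claim~\ref{cl:covering_polytope_by_cubes}): one covers each $P_j$ by the \emph{same number} of small cubes of a common size, discarding superfluous ones near $\partial P_j$ to equalise the counts. The paper also inverts your order of operations: it applies transversality and thickening \emph{first}, to push orbits away from $\cup_j\partial\pi(a_j)$ and obtain a compact $\mathcal K'\subset\cup_j\Int(\pi(a_j))$, and only \emph{then} covers $\mathcal K'$ by polydiscs; a second pass of transversality and thickening handles the boundaries of the $b_i$'s and yields the shrunken $b_i'$. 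Your ``tubes parallel to the flow'' picture is not needed and would in any case not produce axis-aligned polydiscs unless the direction $Z$ happens to be a coordinate vector.
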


The families of polydiscs $B_i, B_i'$ will be referred to as the \emph{transportation boxes.} 

\begin{proof}
The construction of $\Psi_1$ and $B_i$'s will be  mostly carried out in the quotient $$M' = (M \setminus \Fix(S) ) / S_{\frac{1}{Nq}}.$$
Recall, from the explanation after Equation \eqref{eq:def_cQ}, that $M'$ is a manifold and, moreover, it  naturally inherits the symplectic structure  and the Hamiltonian circle action of $M$.  We will denote the symplectic form and the circle action on $M'$ by, respectively, $\omega'$ and $S'_t$ where now $t$ belongs to the circle $\bbR \mod  \frac{1}{Nq}$.  When dealing with this new circle action, the ``time spent'' in some set will be with respect to the (non normalised) Lebesgue measure on $\bbR \mod  \frac{1}{Nq}$, with total mass $\frac{1}{Nq}$.

The quotient map $\pi: M  \setminus  \mathrm{Fix}(S) \rightarrow M'$ is a covering map, hence an element $\psi \in \Symp_0(M', \omega')$ lifts to a symplectic diffeomorphism $\Psi \in \Symp_0(M, \omega)$ which commutes with $S_{\frac{1}{Nq}}$.  Furthermore,  if $\psi$ is close to the identity then so is $\Psi$.

Recall that one of the steps in the construction of the equidistribution boxes $A_i$ requires a decomposition of the torus $\bbT^n$, into cubes $T$, which is invariant under translation by $S_{\frac{1}{q}}$.   By taking a refinement of that decomposition, which is invariant under translation by $S_{\frac{1}{Nq}}$, we obtain smaller boxes $a_1, \ldots, a_{N'}$\footnote{The $N'$ here is not the same as the $N'$ used in the proof of Lemma \ref{lemm.equidistributionboxes}.} which have the following list of properties:

\begin{enumerate}

\item (Action-Angle Coordinates) Each $a_{j}$ is contained in some $A_i$.  Moreover, $a_j$ is the subset of $A_i$ obtained by restricting the map from the third item in Lemma \ref{lemm.equidistributionboxes}, 
$$
\begin{array}{rcl}
\Xi: P \times T \rightarrow M \\
(s,t) \mapsto \bbT^n_{t}(\sigma(s)),
\end{array}
$$
to $P \times T'$, where $T'$ is one of the cubes arising from the subdivision of the cube $T$  into smaller equal cubes.   The map $\Xi$ defines a symplectomorphism from $\Int(P \times T')$ to $\Int(a_i)$. 

Note that each $A_i$ contains the same number of $a_j$'s.  Therefore, it is sufficient to prove the statement of Lemma \ref{lemm.bigboxes} with the $A_i$'s replaced by $a_j$'s.

 \item Each box  $a_j$ contains at most one point of each orbit under $S_{\frac{1}{Nq}}$,
 so the projection $\pi: M \to M'$ restricts to a symplectomorphism between  $ a_j$ and $\pi( a_j)$, for all $j$. This implies, in particular, that $\Int(\pi(a_j))$ is also symplectomorphic to $\Int(P \times T' )$.
 
\item Note that since $W$ and $\cup_i A_i$ are disjoint and  invariant under the circle action, the sets $\pi(W)$ and  $\cup_j \pi(a_j)$ are disjoint.
\end{enumerate}

We leave it to the reader to check that Lemma \ref{lemm.bigboxes} follows from the  discussion above by setting  $B_i'$'s, $B_i$'s, $\Psi_1$, and $K$ to be, respectively, the lifts of the $b_i'$'s, $b_i$'s,  $\psi_1$, and $\mathcal K$ from the claim below.

\begin{claim} \label{cl:construction_quotient}
There exist two finite collections of polydiscs $b_i' \subset b_i \subset M'$ and $\psi_1 \in \Symp_0(M', \omega')$ such that the following properties are satisfied:

\begin{enumerate}
\item The polydiscs $b_i$ are all symplectomorphic to the standard polydisc $[-r,r]^{2n}$ for some $r>0$, have disjoint interiors, each $b_i$ is included in the interior of some $\pi(a_j)$, and each $\pi(a_j)$ contains the same number of $b_i$'s,

\item $b_i'$ is a sub-polydisc of $b_i$ of the form   $[-r', r']^{2n} \subset [-r, r]^{2n}$, where $r' < r$,

\item $\psi_1$ is $C^\infty$--close to the identity and is compactly supported,

\item Every orbit of the conjugated circle action $\psi_{1} S' \psi_{1}^{-1}$  spends more time than $\frac{1}{Nq}-\frac{\varepsilon'}{Nq}$  in $\mathcal{K} \cup \pi(W)$, where $\mathcal K$ is a compact subset of $\Int\left(\cup_{i} b_{i}'\right)$. 
\end{enumerate}
\end{claim}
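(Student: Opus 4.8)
The plan is to carry out the whole construction in the quotient symplectic manifold $M'=(M\setminus\Fix(S))/S_{\frac1{Nq}}$, equipped with the locally free circle action $S'$ (note its generating vector field is non-vanishing, since the action is locally free), and two ingredients will do all the work. The first is the observation recorded right after Lemma~\ref{lemm.equidistributionboxes}: every face of every $\pi(a_j)$ (and likewise of $\pi(W)$) is compactly contained in an open submanifold of $M'$ of positive codimension. The second is that, away from a neighbourhood of the vertices of $\Delta$, each $\pi(a_j)$ admits a polydisc grid: in the interior one uses the global action-angle coordinates of \eqref{eq:section_moment_polytope}, and near $\mu^{-1}(\partial\Delta)$ one uses the local normal form mentioned in Section~\ref{sec:prelim_symp}. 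The point that forces the argument to have a nontrivial shape is that the faces of the $\pi(a_j)$ are of the form $\mu^{-1}(\text{affine piece})$ or $\{\theta_k=\mathrm{const}\}$, hence typically \emph{invariant} under $S'$; an orbit lying on such a face is tangent to it for all time, and the only available remedy is the transversality lemma (Lemma~\ref{lemma.transversality}), which removes this pathology for \emph{any} submanifold at the cost of a $C^{\infty}$-small, compactly supported perturbation of the action.

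Concretely, I would proceed in the following order (the ordering is what prevents circularity among the small parameters). Let $X_{\mathrm{out}}$ be the finite union of all faces of all $\pi(a_j)$ together with those of $\pi(W)$, a compact set, each piece of which sits in an open submanifold by the observation above. Applying Lemma~\ref{lemma.transversality} finitely many times --- exactly as in the deduction of that lemma from Claim~\ref{cl:transversality} --- produces a $C^{\infty}$-small, compactly supported $\psi^{(0)}\in\Symp_0(M',\omega')$ such that $X_{\mathrm{out}}$ is stably almost transverse to the vector field of $\psi^{(0)}S'(\psi^{(0)})^{-1}$; by point 2 of Remark~\ref{rem:contact} each orbit of this action meets $X_{\mathrm{out}}$ in a finite set and hence spends zero time there, so the thickening lemma (Lemma~\ref{lemma.thickening}) supplies $\delta_1>0$ with every orbit spending less than $\tfrac{\varepsilon'}{3Nq}$ in $V_{\delta_1}(X_{\mathrm{out}})$. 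Only now do I fix $r>0$, small enough that in each $\pi(a_j)$ the grid cubes of size $\sim r$ lying entirely inside $\pi(a_j)$ --- these, after rescaling each factor, are our polydiscs $b_i$ --- cover all of $\pi(a_j)$ except a collar of width $<\delta_1$ around $\partial(\pi(a_j))\subset X_{\mathrm{out}}$; the subdivisions of $\Delta$ (respecting $\partial\Delta$) and of the cubes $T'$ are chosen so that the number of $b_i$ is the same in every $\pi(a_j)$, using that the $\pi(a_j)$ have equal volume (a bounded discrepancy, should one arise, is absorbed by padding with finitely many further disjoint polydiscs inside the interiors). With the $b_i$ fixed, let $X_{\mathrm{grid}}$ be the finite union of their faces, compose $\psi^{(0)}$ with one more $C^{\infty}$-small, compactly supported perturbation --- close enough to preserve the previous step's conclusions by stability --- to obtain $\psi_1$ for which $X_{\mathrm{out}}\cup X_{\mathrm{grid}}$ is stably almost transverse to the vector field of $\psi_1S'\psi_1^{-1}$, and use the thickening lemma once more to get $\delta_2>0$ with orbits spending less than $\tfrac{\varepsilon'}{3Nq}$ in $V_{\delta_2}(X_{\mathrm{grid}})$. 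Finally let $b_i'\subset b_i$ be the concentric shrinking of $b_i$ by less than $\delta_2$, and $\mathcal K=\bigcup_i b_i''$ where $b_i''$ is a further tiny concentric shrinking of $b_i'$, so that $\mathcal K$ is a compact subset of $\Int(\bigcup_i b_i')$. Items (1)--(3) of the claim then hold by construction.

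For item (4) I would use the decomposition $M'=\pi(W\setminus\Fix(S))\cup\bigcup_j\pi(a_j)$, valid because $W\cup\bigcup_j a_j=M$. Hence $M'\setminus(\mathcal K\cup\pi(W))\subseteq\bigcup_j\big(\pi(a_j)\setminus\bigcup_i b_i''\big)$, and inside any $\pi(a_j)$ the part lying outside $\bigcup_i b_i$ is the collar, contained in $V_{\delta_1}(X_{\mathrm{out}})$, while $\bigcup_i b_i\setminus\bigcup_i b_i''$ is made up of shared faces (lying in $X_{\mathrm{grid}}$) and concentric shells $b_i\setminus b_i''$ (lying in $V_{\delta_2}(X_{\mathrm{grid}})$). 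Therefore $M'\setminus(\mathcal K\cup\pi(W))\subseteq V_{\delta_1}(X_{\mathrm{out}})\cup V_{\delta_2}(X_{\mathrm{grid}})$, so every orbit of $\psi_1S'\psi_1^{-1}$ spends there total time less than $\tfrac{2\varepsilon'}{3Nq}<\tfrac{\varepsilon'}{Nq}$, i.e.\ it spends more than $\tfrac1{Nq}-\tfrac{\varepsilon'}{Nq}$ in $\mathcal K\cup\pi(W)$, which is item (4).

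The hard part, and the only genuinely delicate point, is the invariance of the faces: one cannot escape it by any choice of geometry, only by the transversality perturbation, so one must ensure this perturbation can be made simultaneously small, compactly supported, and effective on all faces at once --- including on the lower-dimensional strata where several faces meet, which is handled by iterating Lemma~\ref{lemma.transversality} and invoking stability --- while still leaving room to fix the grid skeleton afterwards. This is precisely why the parameters $\psi^{(0)},\delta_1,r,\psi_1,\delta_2$ must be chosen in that staggered order. A secondary, purely combinatorial nuisance is arranging that every $\pi(a_j)$ contains the same number of polydiscs $b_i$.
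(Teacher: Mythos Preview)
Your proposal is correct and follows essentially the same two-step approach as the paper: first perturb to make orbits almost transverse to $\partial\pi(a_j)$ (your $X_{\mathrm{out}}$, the paper's $Y$), use the thickening lemma to fix the collar width, cover the complement by a grid of equal polydiscs via the action-angle identification $\Int(\pi(a_j))\cong\Int(P_j\times T')$, then repeat for the faces of the $b_i$'s (your $X_{\mathrm{grid}}$) to produce $b_i'$ and $\mathcal K$. Your appeal to the local normal form near $\mu^{-1}(\partial\Delta)$ is unnecessary---the interior action-angle chart already covers $\Int(\pi(a_j))$ in full---and the paper equalizes the box count by \emph{discarding} superfluous cubes rather than padding (which is cleaner, since padding forces you to shrink $r$ to fit extra boxes of the same size), but these are cosmetic differences.
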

  We will prove the above claim in two steps.  We construct the $b_{i}$'s in the first step, and the $b'_{i}$'s in the second step. At each step we have to perturb the action, so that after Step 1 no orbit spends too much time near the boundary of the $\pi(a_j)$'s, and  after Step 2 no orbit spend too much time near the boundary of the $\pi(b_i)$'s.  Figures \ref{fig:transport1} and  \ref{fig:transport2} represent the two steps of our construction.
 
 \bigskip
 
 \noindent {\bf Step 1:}  In the first step of the proof of Claim \ref{cl:construction_quotient}, depicted in Figure \ref{fig:transport1},  we will construct the polydiscs $b_i$ and a symplectomorphism $\theta_1 \in \Symp_0(M', \omega')$ such that the following properties are satisfied:
 \begin{enumerate}
\item The polydiscs $b_i$ are all symplectomorphic to the standard polydisc $[-r,r]^{2n}$ for some $r>0$, have disjoint interiors, each $b_i$ is included in the interior of some $\pi(a_j)$, and each $\pi(a_j)$ contains the same number of $b_i$'s,

\item $\theta_1$ is $C^\infty$--close to the identity and is compactly supported, 

\item Every orbit of the conjugated circle action $\theta_{1} S' \theta_{1}^{-1}$  spends more time than $\frac{1}{Nq} - \frac{\varepsilon'}{2Nq}$  in $ \pi(W) \cup_{i} b_{i}$. 
\end{enumerate}

\begin{figure}[h] 
\centering
\def\svgwidth{1\textwidth}
\input{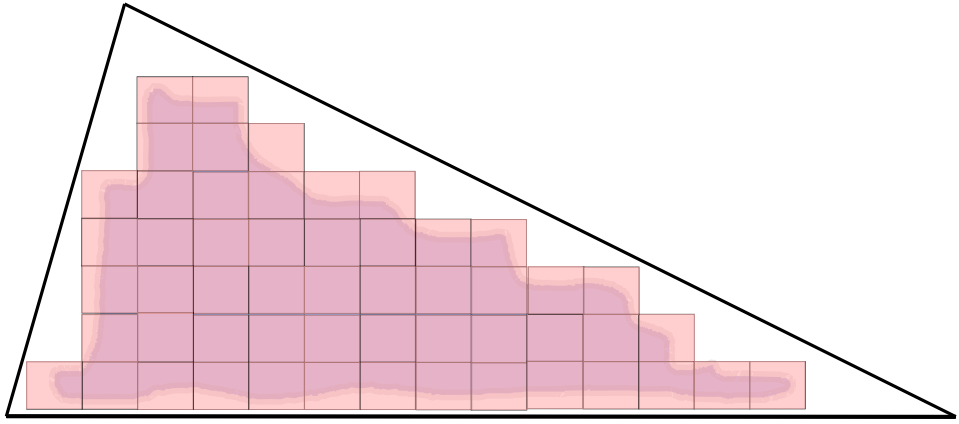}
\caption{\label{fig:transport1} Depiction of Step 1 in the construction of transportation boxes:  The triangle represents $\pi(a_j)$ for a fixed $j$.   The boxes $b_i$ are in pink and the region $\mathcal K_j'$ is shaded purple.  The orbits of $\theta_1 S' \theta_1^{-1}$ spend most of their time in $\pi (W) \cup_j \mathcal{K}_j'$.}
\end{figure}

 Let $\vec V$ be the vector field on $M'$ tangent to the circle   action.  Let $Y$ denote the union of the boundaries of the $\pi(a_{j})$'s. We will now make a perturbation of the vector field so that each of its orbits spends little time near $Y$.

  The boundary of each $\pi(a_{j})$ is a union of submanifolds with boundary.  Let $(Y_{k})_{k =0, \dots , m}$ be a numbering of all these submanifolds, and 
let  $(X_{k})_{k =0, \dots , m}$ denote open hypersurfaces in $M$ such that for each $k$, $X_{k}$ contains $Y_{k}$. 
  Applying our transversality lemma, Lemma \ref{lemma.transversality}, we obtain a $C^\infty$ small and compactly supported symplectorphism $\varphi_0$ of $M'$ such that $X_{0}$ is stably almost transverse to $\varphi_{0 *}\vec V$ on $Y_{0}$.  We repeatedly apply Lemma \ref{lemma.transversality} to obtain $C^\infty$ small symplectomorphisms  $(\varphi_{k})_{k =0, \dots , m}$ of  $M'$ such that for each $k$ and each $j \leq k$, $X_{j}$  is stably almost transverse to $(\varphi_{k} \circ \cdots \circ \varphi_{0})_{*}\vec V$ on  $Y_{j}$. Now, let  $\theta_{1} =  \varphi_{m} \circ \cdots \circ  \varphi_{0}$.  We have that  $X_{k}$ is stably almost transverse to $\theta_{1 *}\vec V$ on $Y_{k}$ for each $0 \leq k \leq m$.  Observe that this implies that  every orbit of the conjugated action $\theta_{1} S' \theta_{1}^{-1}$  meets $Y= \cup Y_{k}$ at most finitely many times.  According to the Thickening Lemma \ref{lemma.thickening}, there exists $\delta>0$ so that every orbit of $\theta_{1} S' \theta_{1}^{-1}$ spends less time than $\frac{\varepsilon'}{2Nq}$ in the $\delta$-neighborhood $O$ of the set $Y$.

 For each $j$, let $\mathcal K_j' = \pi(a_j) \setminus O $; this  is a compact subset of the interior of $\pi(a_j)$.
Let $\mathcal K' = \cup \mathcal K_j'$ and observe that every orbit of  $\theta_{1} S' \theta_{1}^{-1}$ spends more time than $\frac{1}{Nq}- \frac{\varepsilon'}{2Nq}$ in $\mathcal K' \cup \pi(W)$.  
    
To complete the first step of the proof, it remains to show that we can find  symplectomorphic polydiscs $b_i$ with disjoint interiors such that each $b_i$ is contained in some $\pi(a_j)$,  each $\pi(a_j)$ contains the same number of $b_i$'s, and  $\mathcal K' \subset \Int\left(\cup_{i} b_{i}\right)$.
 
  As mentioned above, there exists a symplectic identification of $\Int(\pi(a_j))$ with the interior of a product of the form $P_j \times T$, where $P_j$ is some polytope in $\mathbb R^n$ and $T$ is a cube in the torus $\bb T^n$.     Hence, we may suppose that $\mathcal K_j' \subset \Int(P_j \times T)$.  We will take care of $P_{j}$ and $T$ separately.  Recall that the polytopes $P_j$ were picked to have equal volumes.   

\begin{claim}\label{cl:covering_polytope_by_cubes}
Let $k_j'$ denote the image of $\mathcal K_j'$ under the canonical projection $P_j \times T \rightarrow P_j  \subset \Delta \subset \bbR^n$.  Then,  $k_j'$ may be covered by cubes $e_1, \ldots, e_l$ such that
\begin{enumerate}

\item Each of the cubes $e_1, \ldots, e_l$ is a translation of the cube $[0,\eta]^n$, for some $\eta$,

\item The cubes $e_1, \ldots, e_l$ are all contained in the interior of $P_j$,

 \item The cubes $e_1, \ldots, e_l$ have disjoint interiors,
\item The number of cubes $e_1, \ldots, e_l$  used to cover $k_j'$ does not depend on $j$. 
\end{enumerate}
\end{claim}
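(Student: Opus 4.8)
\medskip
\noindent\textbf{Proof strategy for Claim~\ref{cl:covering_polytope_by_cubes}.} The plan is to let the $e_i$ be cubes of a single fixed fine grid on $\bbR^n$: this makes items (1)--(3) automatic, and the only point requiring an idea is (4), that the \emph{number} of cubes can be chosen independent of $j$.

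First I would collect the uniform data. There are finitely many boxes $a_j$, and each $k_j'$ is a compact subset of $\Int(P_j)$, so there is one $\rho>0$ with $\mathrm{dist}(k_j',\bbR^n\setminus\Int(P_j))>\rho$ for every $j$; write $V$ for the common volume of the polytopes $P_j$. The essential extra point is that $k_j'$ occupies only a definite fraction of $P_j$: since $\mathcal{K}_j'=\pi(a_j)\setminus O$ where $O$ is the $\delta$-neighbourhood of the boundary faces, the identification $\Int(\pi(a_j))\cong\Int(P_j\times T)$ shows $k_j'$ avoids a neighbourhood of $\partial P_j$, so there is a uniform $\beta>0$ with $\Vol(k_j')\le V-\beta$ for all $j$. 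This slack is what will make the counting work.

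Next, fix the grid of closed cubes $v+[0,\eta]^n$, $v\in\eta\bbZ^n$, for $\eta$ small, and let $\mathcal{G}_j$ be the collection of grid cubes contained in $\Int(P_j)$, with $N_j:=\#\mathcal{G}_j$. A grid cube meeting $k_j'$ lies within $\eta\sqrt n<\rho$ of $k_j'$, hence inside $\Int(P_j)$; so $\mathcal{G}_j$ contains every grid cube meeting $k_j'$ and in particular covers $k_j'$. Let $\tilde m_j$ denote the least size of a subfamily of $\mathcal{G}_j$ still covering $k_j'$. For $\eta$ small, uniformly in $j$, $\tilde m_j\eta^n$ is at most the volume of the $\eta\sqrt n$-neighbourhood of $k_j'$, which is close to $\Vol(k_j')\le V-\beta$; while $N_j\eta^n=V-\Vol\big(\Int(P_j)\setminus\bigcup\mathcal{G}_j\big)$ is close to $V$, the uncovered part being contained in an $\eta\sqrt n$-collar of $\partial P_j$ of volume $O(\eta)$. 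Hence for $\eta$ small, $\min_j N_j$ is larger than $\max_j\tilde m_j$, with a gap that tends to $\infty$ as $\eta\to0$, and we may fix an integer $l$ with $\max_j\tilde m_j\le l\le\min_j N_j$.

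It remains, for each $j$, to produce exactly $l$ grid cubes in $\Int(P_j)$ covering $k_j'$: take a minimum cover $\mathcal{C}_0\subseteq\mathcal{G}_j$ of $k_j'$, of size $\tilde m_j\le l$, and adjoin $l-\tilde m_j$ of the remaining $N_j-\tilde m_j\ge l-\tilde m_j$ cubes of $\mathcal{G}_j$; the resulting $l$ grid cubes have pairwise disjoint interiors, lie in $\Int(P_j)$, are translates of $[0,\eta]^n$, and still cover $k_j'$. Labelling them $e_1,\dots,e_l$ proves the claim, with $l$ independent of $j$. The one genuinely non-routine step is (4): the count must be matched across polytopes $P_j$ that share a volume but a priori have very different shapes, and the key to doing so is the observation that $k_j'$ fills only a fraction $\le(V-\beta)/V$ of $P_j$, so at a fine enough scale far fewer cubes are needed to cover $k_j'$ than fit inside $\Int(P_j)$, both bounds being uniform over the finitely many shapes. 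Everything else --- the distance comparisons, $\Vol(B_\epsilon(k_j'))\to\Vol(k_j')$ by continuity of measure, and $\Vol(\partial P_j)=0$ for a polytope --- is routine.
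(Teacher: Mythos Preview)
Your proof is correct and follows essentially the same approach as the paper's: use a single fine grid of mesh $\eta$, observe that the grid cubes contained in $\Int(P_j)$ already cover $k_j'$ for $\eta$ small, and then exploit the slack between the number of cubes that fit in $\Int(P_j)$ (approximately $V/\eta^n$) and the number actually needed to cover $k_j'$ (strictly fewer, since $k_j'$ is uniformly bounded away from $\partial P_j$) to equalize the counts across $j$. The paper phrases the adjustment as \emph{discarding} unnecessary cubes from the full collection $\mathcal{G}_j$, whereas you phrase it as \emph{adding} spare cubes to a minimal cover, but the underlying idea and the key quantitative point---that $\max_j \tilde m_j < \min_j N_j$ for $\eta$ small because the $P_j$ share a common volume while each $k_j'$ has uniformly smaller volume---are the same; your write-up is simply more explicit about the estimates.
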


To prove the first three items in the above claim one can simply place a grid on $\bb R^n$ whose edge-size is $\eta$, for a sufficiently small $\eta$,  and select the closed cubes from the grid that are included in the interior of $P_{j}$.
To get the last item one must use the fact that the $P_j$'s are polytopes of equal volume in $\bb R^n$.  
Note that if we make the size of the grid converge to zero, then the volume $v_{j}$ of the union of the selected cubes converges to the volume of $P_{j}$. Thus we may choose a common size $\eta$ such that all the $v_{j}$'s are very close to the common volume of the  $P_{j}$'s. If $\eta$ is small enough then many cubes near the boundary $\partial P_{j}$ will actually not intersect $k'_{j}$. Then we may discard some of those unnecessary cubes, in each $P_{j}$, to adjust for the number and get item 4.

Now, we will complete Step 1 of the proof of Claim \ref{cl:construction_quotient}.  All the cubes $T$ in $\bbT^n$
have the same size, so we may identify them with some $T_{0} = [0,\eta]^n$.
We divide $T_{0}$ into very small equal cubes such that the cubes which are contained in the interior of $T$ cover the projection of $\mathcal K_j'$ onto $T$, for every $j$.
The $b_i$'s  are obtained by simply taking the products of these cubes with those from Claim \ref{cl:covering_polytope_by_cubes}. 
As a consequence of Claim \ref{cl:covering_polytope_by_cubes}, each $\pi(a_j)$ contains the same number of the $b_i$'s.  Furthermore, it is not difficult to see that there exists some $r$ such that the $b_i$'s are all symplectomorphic to $[-r,r]^{2n}$.

\medskip

\noindent {\bf Step 2: }  In the second step of the proof of Claim \ref{cl:construction_quotient}, depicted in Figure \ref{fig:transport2},  we construct  polydiscs $b_i'$ and $\theta_2 \in \Symp_0(M', \omega')$ such that 
\begin{enumerate}
\item Each $b_i'$ is a sub-polydisc of $b_i$ of the form   $[-r', r']^{2n} \subset [-r, r]^{2n}$, where $r' < r$,

\item $\theta_2$ is $C^\infty$--close to the identity and is compactly supported,

\item   Every orbit of the conjugated circle action $\psi_1 S \psi_{1}^{-1}$, where $\psi_1 = \theta_2 \circ \theta_1$,  spends more time than $\frac{1}{Nq} - \frac{\varepsilon'}{Nq}$  in $\mathcal{K} \cup \pi(W)$, where $\mathcal K$ is a compact subset of $\Int\left(\cup_{i} b_{i}'\right)$. 
\end{enumerate}
It is clear that the proof of Claim \ref{cl:construction_quotient} will be completed once $b_i'$'s and $\theta_2$, satisfying the above properties, are constructed.  

Recall that every orbit of $ \theta_{1} S' \theta_1^{-1}$ spends more time than $\frac{1}{Nq} -\frac{\varepsilon'}{2Nq}$ in $\pi(W) \cup_i b_i  $.  We can find  a $C^{\infty}$-small symplectomorphism $\theta_2$ such that every orbit of the conjugated action $\theta_2 \theta_{1} S' \theta_1^{-1} \theta_{2}^{-1}$  spends 
\begin{itemize}
\item more time than  $1 -\frac{\varepsilon'}{Nq}$ in $\pi(W) \cup_i b_i  $ and,
\item less time than $\frac{\varepsilon'}{Nq}$ in some small open neighborhood, say  $O'$,  of the union of the boundaries of the polydisc $b_i$.
\end{itemize}
The construction of $\theta_2$ is very similar to that of $\theta_1$ from Step 1 and so it will be omitted.  We let $\psi_1 = \theta_2 \theta_1$.     For each $i$, let $\mathcal K_i =  b_i \setminus O' $; this  is a compact subset of the interior of $b_i$.  One can check that  every orbit of  $\psi_{1} S \psi_{1}^{-1}$ spends more time than $\frac{1}{Nq} - \frac{\varepsilon'}{Nq}$ in $ \pi(W) \cup_i \mathcal K_i  $.   Let $\mathcal{K} = \cup_i \mathcal K_i$.

Recall that each $b_i$ is symplectomorphic to $[-r,r]^{2n}$.  Since the $\mathcal K_i$'s are compact subsets of the $b_i$'s we can find $r'<r$ such that $\mathcal K_i \subset \Int(b_i')$, where $b_i'$ is the sub-polydisc of $b_i$ of the form   $[-r', r']^{2n} \subset [-r, r]^{2n}$.  Hence, we have established that $\mathcal{K}$ is a compact subset of $\Int( \cup_i  b_i')$  and every orbit of  $\psi_{1} S \psi_{1}^{-1}$ spends more time than $\frac{1}{Nq}- \frac{\varepsilon'}{Nq}$ in $ \pi(W) \cup \mathcal K  $.  This completes Step 2, and hence the entirety of the proof of Claim \ref{cl:construction_quotient}.

\begin{figure}[h]
\centering
\def\svgwidth{1\textwidth}
\input{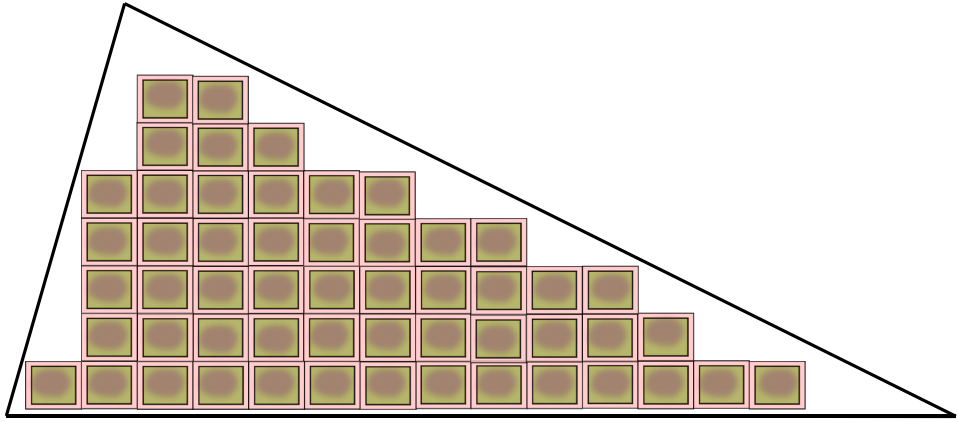}
\caption{\label{fig:transport2} Depiction of Step 2 in the construction of transportation boxes:  The triangle represents $\pi(a_j)$ for a fixed $j$.   The boxes $b_i' \subset b_i$ are in green and pink, receptively, and the regions comprising $\mathcal{K}$ are shaded purple.   The orbits of $\psi_1 S \psi_1^{-1}$ spend most of their time in $\pi (W) \cup \mathcal{K}$.}
\end{figure}
\end{proof}

\subsection{Small boxes}\label{sec:small_boxes}
   We continue to work in the settings of Sections \ref{sec:equidistribution} and \ref{sec:transportation}: We have the equidistribution boxes $A_i$, provided by Lemma \ref{lemm.equidistributionboxes}, the transportation boxes $B_i$, $B_i'$, from Lemma \ref{lemm.bigboxes}, and lastly $\Psi_1 \in \Symp_0(M)$ as described in Lemma \ref{lemm.bigboxes}.

\begin{lemma}[Small Boxes]
\label{lemm.smallboxes}
Let $q \in \cQ$ and $\varepsilon, \varepsilon'>0$ be as in the previous lemmas.
There exist a finite collection of polydiscs $\{c_{k}\}$ in $M$ and $\Psi \in \Symp_0(M, \omega)$ such that the following properties are satisfied:

\begin{enumerate}
\item The $c_k$'s are disjoint, each  $c_k$ is included in some transportation box $B'_{i} = B'(c_k)$, and each transportation box contains the same number of $c_k$'s.  

\item  $S_{\frac{1}{Nq}}$ acts on the $c_k$'s  as a permutation by $Nq$-cycles.

\item $\Psi$ is $C^{\infty}$--close to the identity, its support is disjoint from $\Fix(S)$, and it commutes with $S_{\frac{1}{Nq}}$.

\item Every orbit of the circle action $\Psi S \Psi^{-1}$  spends more time than $1-\varepsilon'$  in $W \cup (\cup_{k} c_{k})$.

\item (Transport) For any small box $c_k$, let $$\mathcal{O}(c_k) = \{S_{\frac{j}{q}} (c_k) \mid j \in \{0, \dots, q-1\} \}$$ be the orbit of $c_k$ under the action of $S_{\frac{1}{q}}$. 
For any two small boxes  $c_{k_1},c_{k_2}$, there exists  $\Phi_{{k_1}{k_2}}$  a compactly supported symplectomorphism of  $M \setminus W$ which commutes with $S_{\frac{1}{q}}$ and has the following properties:
\begin{enumerate}
\item  $\Phi_{{k_1}{k_2}}$ acts as a permutation on the set of all $c_k$'s,
\item  $\Phi_{{k_1}{k_2}} ( \mathcal{O}(c_{k_1}) ) =  \mathcal{O}(c_{{k_2}})$ and $\Phi_{{k_1}{k_2}} ( \mathcal{O}(c_{{k_2}}) ) =  \mathcal{O}(c_{k_1})$,

\item  $\Phi_{{k_1}{k_2}}(c_{k}) \subset B'(c_{k})$ for each small box $c_{k} \notin \mathcal{O}(c_{k_1}) \cup \mathcal{O}(c_{k_2})$.
\end{enumerate}
\end{enumerate}
\end{lemma}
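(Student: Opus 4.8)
The plan is to imitate the proof of Lemma~\ref{lemm.bigboxes}: the small boxes $c_k$ will be a fine grid of equal symplectic cubes covering the compact set $K$ produced there, slightly shrunk so as to be genuinely disjoint; $\Psi$ will be $\Psi_1$ followed by a transversality-type perturbation; and the transport maps $\Phi_{k_1k_2}$ will be built in the quotient $Q:=(M\setminus W)/S_{\frac1q}$, which is a smooth symplectic manifold because $q\in\cQ$.

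\textbf{Items (1)--(4).} From Lemma~\ref{lemm.bigboxes}, applied with $\varepsilon'/2$ in place of $\varepsilon'$, we have a compact $K\subset\Int(\bigcup_iB_i')$ such that every orbit of $S':=\Psi_1S\Psi_1^{-1}$ spends more than $1-\varepsilon'/2$ in $K\cup W$; after replacing $K$ by $\bigcup_jS_{\frac j{Nq}}(K)$ we may assume $K$ is $S_{\frac1{Nq}}$-invariant. Each $B_i'$ is a standard polydisc $[-r',r']^{2n}$; working $S_{\frac1{Nq}}$-equivariantly (pick one transportation box in each $Nq$-cycle, subdivide it into a fine grid of equal closed cubes, retain the ones meeting $K$, translate everything by $S_{\frac1{Nq}}$), then adding further cubes in the free space — again $S_{\frac1{Nq}}$-equivariantly — so that every $B_i'$ holds the same number, and finally shrinking each cube slightly, one obtains pairwise disjoint small boxes $c_k$, all symplectomorphic to one standard polydisc, lying in the interiors of the transportation boxes with equal multiplicities, and permuted by $S_{\frac1{Nq}}$ in cycles of length exactly $Nq$ (the length is $Nq$ rather than a proper divisor because $Nq\in\cQ$ makes $S_{\frac1{Nq}}$ act freely on $M\setminus\Fix(S)$). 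This gives (1) and (2). For (3)--(4): the part of the grid-cube cover of $K$ not contained in $\bigcup_kc_k$ is a neighbourhood of the union $Y$ of the boundary hypersurfaces of the grid cubes, and, exactly as in Step~2 of the proof of Lemma~\ref{lemm.bigboxes} — applying the transversality Lemma~\ref{lemma.transversality}, the thickening Lemma~\ref{lemma.thickening} and the stability Lemma~\ref{lemm:stability} in $(M\setminus\Fix(S))/S_{\frac1{Nq}}$ and lifting — we find a $C^\infty$-small $\Psi_2$, supported away from $\Fix(S)$ and commuting with $S_{\frac1{Nq}}$, so that every orbit of $\Psi_2S'\Psi_2^{-1}$ spends less than $\varepsilon'/2$ near $Y$. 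Then $\Psi:=\Psi_2\Psi_1$ satisfies (3), and its conjugated circle action has every orbit spending more than $1-\varepsilon'$ in $W\cup\bigcup_kc_k$, which is (4).

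\textbf{Item (5).} Since $q\in\cQ$, $S_{\frac1q}$ acts freely on $M\setminus\Fix(S)$, so $Q:=(M\setminus W)/S_{\frac1q}$ is a connected symplectic manifold (with boundary) of dimension $2n\geq2$; because $S_{\frac1q}$ permutes the $B_i'$ and the $c_k$ in cycles of length $q$ (the cycle decomposition of the $N$-th power of an $Nq$-cycle consists of $N$ cycles of length $q$), each $B_i'$ and each $c_k$ projects diffeomorphically into $Q$, with images $\overline B_i'$ and $\overline c_k$; the $\overline c_k$ are then finitely many pairwise disjoint standard polydiscs, each inside a single $\overline B_i'$. Given $c_{k_1},c_{k_2}$, I would produce a compactly supported symplectomorphism $\overline\Phi$ of $Q$ that transposes $\overline c_{k_1}$ and $\overline c_{k_2}$ and fixes every other $\overline c_k$ (if $\overline c_{k_1}=\overline c_{k_2}$ take $\overline\Phi=\Id$); being compactly supported and isotopic to the identity through such maps, $\overline\Phi$ lifts to an $S_{\frac1q}$-equivariant, compactly supported symplectomorphism $\Phi_{k_1k_2}$ of $M\setminus W$, and equivariance together with the transposition of $\overline c_{k_1},\overline c_{k_2}$ give properties (a), (b), (c). To construct $\overline\Phi$, fix a ``garage'' polydisc $\overline c^{\mathrm{g}}$ of the size of the small boxes, disjoint from all $\overline c_k$ and from all $\overline B_i'$, and perform three slides — $\overline c_{k_1}$ onto $\overline c^{\mathrm{g}}$, then $\overline c_{k_2}$ onto the vacated spot of $\overline c_{k_1}$, then $\overline c^{\mathrm{g}}$ onto the vacated spot of $\overline c_{k_2}$ — each realised by a symplectomorphism supported in a thin symplectic tube (a long thin polydisc neighbourhood of an embedded arc joining the two spots), routed to avoid all small boxes except the one being moved, and sending that box exactly onto its target via the extension-after-restriction argument from the proof of Lemma~\ref{lemm:transport_lemma}. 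The composition of the three slides is $\overline\Phi$.

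\textbf{Main obstacle.} The delicate point, and the reason the $c_k$ must be chosen small, is routing those thin tubes so as to avoid the other small boxes: this is precisely where the symplectic camel-type obstructions recalled in Section~\ref{sec:proof_main_prop} would prevent a naive ``swap two polydiscs'' argument. It is handled by connectedness of $Q$ and $\dim Q\geq2$ — a generic arc between prescribed spots misses the finitely many (small) other boxes, and so does a sufficiently thin tube around it — together with the order in which the data are chosen: the transportation boxes, hence the room available inside each of them and in the complement of their union in $Q$, are fixed before the $c_k$, so a self-similar refinement of the grid makes the small boxes as small as the routing demands. The rest — equal multiplicities, $Nq$-cyclicity, and the $\varepsilon'$-bookkeeping in (4) — is routine and parallels Lemmas~\ref{lemm.equidistributionboxes} and~\ref{lemm.bigboxes}.
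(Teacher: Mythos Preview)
Your treatment of items (1)--(4) is essentially the paper's: grid the $B_i'$'s, shrink, and use transversality/thickening to build $\Psi_2$. That part is fine.

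The gap is in item (5). Your slides require a symplectic tube from $\overline c_{k_1}$, which sits deep inside $\overline B'_{i_1}$, out to the garage, and this tube must avoid every other $\overline c_k$. But the other small boxes fill $\overline B'_{i_1}$ up to gaps of width $2(\delta'-\delta)$, while the tube must have cross-section at least $2\delta$ to carry $\overline c_{k_1}$. The camel obstruction you yourself flag bites exactly here: you cannot symplectically push a polydisc of size $\delta$ through a gap of size $\delta'-\delta$ unless $\delta \lesssim \delta'-\delta$. Your proposed fix, ``self-similar refinement of the grid'', does not help, because it rescales $\delta$ and $\delta'-\delta$ together; and making $\delta \ll \delta'$ instead destroys item (4), since then $c_k' \setminus c_k$ is no longer a thin neighbourhood of $\partial c_k'$ and the thickening lemma gives no control there. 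The sentence ``a generic arc misses the finitely many other boxes'' is true but beside the point: the boxes have codimension $0$, so the arc can be threaded through the complement, but its symplectic thickening cannot. The statement of the lemma in fact anticipates this: (a) asks only that $\Phi_{k_1k_2}$ permute the $c_k$'s, and (c) asks only that the other $c_k$ stay in their own $B'(c_k)$, not that they be fixed --- you are attempting something strictly stronger, and that stronger thing is exactly what the camel prevents.

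The paper's route is different. It never tries to thread a tube past the other small boxes inside $B_i'$. Instead, it first plants an ``exit box'' $U_i(\delta)\subset B_i\setminus B_i'$ in the corridor between $B_i'$ and $B_i$, and uses the transport lemma in the \emph{fixed} manifold $\bigl(M\setminus(W\cup\bigcup_i B_i')\bigr)/S_{1/q}$ to get a $\delta_0$ so that any two exit boxes can be swapped there; only \emph{after} $\delta_0$ is fixed does it choose the grid size $\delta'<\delta_0$. Inside each $B_i'$ it moves $c_{k_1}$ to the edge and out to $U(c_{k_1})$ by a sequence of \emph{adjacent swaps} (Lemma~\ref{lemm:polydisc_swapping_lemma}), which permute the other grid boxes rather than avoid them --- this is why (a) and (c) are stated as they are. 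The final $\Phi_{k_1k_2}$ is then a conjugate $\Theta_{k_1}^{-1}\Theta_{k_2}^{-1}\Upsilon\,\Theta_{k_2}\Theta_{k_1}$, where $\Upsilon$ swaps the exit boxes through the corridor and the $\Theta$'s are the adjacent-swap compositions. The point is that the long-range transport happens in a region that is fixed before the small boxes are chosen and contains none of them.
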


\begin{proof}
\noindent {\bf Preparation for the construction of $c_k$'s:}\\
Recall that $B_i'$ is a sub-polydisc of $B_i$ of the form $[-r', r']^{2n} \subset [-r, r]^{2n}$.  Let $V_i$ be the polydisc in  $B_{i} \setminus \Int(B'_{i})$ of the form $[r', r' + 2\eta] \times [-\eta, \eta]^{2n-1}  $, where $2\eta < r- r'.$ 
Note that the collection of $V_i$'s is invariant under the action of $S_{\frac{1}{Nq}}$.


\begin{figure}[h] 
\centering
\def\svgwidth{.6\textwidth}
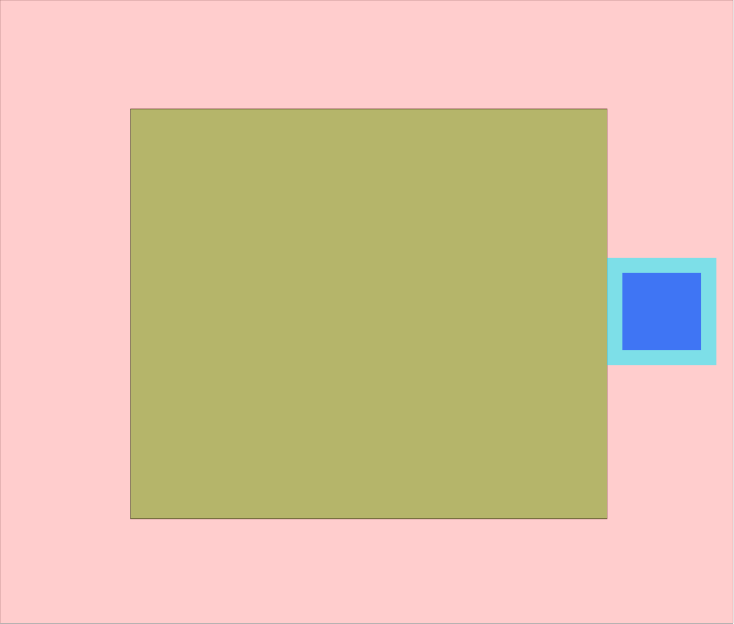
\caption{\label{fig:Small1} As before, $B_i' \subset B_i$ are in pink and green, respectively.  The set $V_i$ is in light blue and $U_i(\delta)$ in dark blue.}
\end{figure}

For each $0< \delta < \eta $, let $U_i(\delta)$ be the sub-polydisc in $V_i$ corresponding to $[r'+\eta -\delta, r' + \eta + \delta] \times [-\delta, \delta]^{2n-1}$;  see Figure \ref{fig:Small1}.  Note that the collection of $U_i(\delta)$'s is also invariant under the action of $S_{\frac{1}{Nq}}$.


\begin{claim}\label{cl:transport_small_boxes}
There exists $\delta_0 >0$ with the following property: Take any $\delta \leq \delta_0$ and consider any two  $V_{i_1}, V_{i_2}$ which have disjoint orbits under the action of $S_{\frac{1}{q}}$, i.e.\ $S_{\frac{j}{q}} (V_{i_1}) \neq V_{i_2}$ for any $j \in \{0, \ldots, q-1 \} $.  Then, there exists a compactly supported symplectomorphism $\Upsilon_{i_1i_2} \in  \Symp_0( M \setminus (W \cup (\cup_i B'_{i} )))$  such that
\begin{enumerate}
\item $\Upsilon_{i_1i_2}(U_{i_1}(\delta)) = U_{i_2}(\delta)$ and $\Upsilon_{i_1i_2}(U_{i_2}(\delta)) = U_{i_1}(\delta)$, 
\item $\Upsilon_{i_1i_2} S_{\frac{1}{q}} = S_{\frac{1}{q}}\Upsilon_{i_1i_2}$.
\end{enumerate}  
\end{claim}
\begin{proof}[Proof of Claim \ref{cl:transport_small_boxes}]

Observe that $S_{\frac{1}{q}}$ acts on $M \setminus (W  \cup (\cup_i B'_{i}) )$; this is because $W$ is invariant under the circle action and $\cup_i B_i'$'s is invariant under the action of $S_{\frac{1}{Nq}}$.  Furthermore, this action is free.  Therefore, we may consider the quotient symplectic manifold $M'' = M \setminus (W \cup (\cup_i B'_{i}) )  / S_{\frac{1}{q}}$.      Let $\pi: M \setminus (W \cup (\cup_i B'_{i}) ) \rightarrow M''$ denote the quotient map.  

We leave it to the reader to check that proving Claim \ref{cl:transport_small_boxes} may be reduced to proving the following statement on $M''$.  There exists $\delta_0 > 0$ with the property that for every $\delta < \delta_0$ and any two $i_1, i_2$,  such that  $\pi(\Int (V_{i_1})) \neq \pi(\Int (V_{i_2}))$, we can find a compactly supported symplectomorphism $\Upsilon_{i_1i_2}$ of $M''$ such that 
$\Upsilon_{i_1i_2} (\pi(U_{i_1}(\delta))) = \pi(U_{i_2}(\delta))$ and $\Upsilon_{i_1i_2}(\pi(U_{i_2}(\delta))) = \pi(U_{i_1}(\delta))$.   

 The fact $\Upsilon_{i_1i_2}$ of the previous paragraph exists for small enough values of $\delta$ is a consequence of Lemma \ref{lemm:transport_lemma} applied in the symplectic manifold $M''$.
 
 The map $\Upsilon_{i_1i_2}$ is depicted by the dotted line in Figure \ref{fig:Small3}.
\end{proof}

\noindent {\bf Construction of $c_k$'s:}
Recall that $B_i'$ is symplectomorphic to $[-r', r']^{2n}$.   We subdivide each $B_i'$ into polydiscs all of which are symplectomorphic to $[-\delta', \delta']^{2n}$, for some $\delta'$ smaller than the $\delta_0$ given by the previous claim; we will denote the collection of these polydiscs by $\{c_k'\}$.
Each $B_i'$ contains the same number of $c_k'$'s and the $c_k'$'s have disjoint interiors.  Since the collection of $B_i'$'s is invariant under the action of $S_{\frac{1}{Nq}}$, we can ensure that the collection of $c_k'$'s is also invariant under the action of $S_{\frac{1}{Nq}}$.

Recall from Lemma \ref{lemm.bigboxes} that  every orbit of the conjugated action $\Psi_1 S \Psi_1^{-1}$ spends more time than $1 - \varepsilon'$ inside $K \cup W$, where $K$ is a compact subset of $\Int\left(\cup_{i} B_{i}'\right)$.  As was done in the proof of of Lemma \ref{lemm.bigboxes}, using Transversality Lemma \ref{lemma.transversality} and Thickening Lemma \ref{lemma.thickening}, we can find $\Psi_2$ such that every orbit of the conjugated action $\Psi_2 \Psi_1 S \Psi_1^{-1} \Psi_2^{-1} $ spends more time than $1 - \varepsilon'$ in $W \cup_k c_k$ where $c_k$ is the sub-polydisc of $c_k'$ of the form $[-\delta,  \delta]^{2n} \subset [-\delta', \delta']^{2n}$, with $ \delta$ being slightly smaller than $\delta'$.  Furthermore, the map $\Psi_2$ can be picked such that it commutes with $S_{\frac{1}{Nq}}$. We will not describe the construction of $\Psi_2$ as it is very similar to that of the maps $\Psi_1$ and $\psi_1$ from Lemma \ref{lemm.bigboxes}; see in particular the first paragraph of the proof of Claim \ref{cl:construction_quotient}.  The symplectomorphism $\Psi$ is the composition $\Psi_2 \circ \Psi_1$. Note that $\Psi$ and the $c_{k}$'s satisfy items 1--4 of the lemma. It remains to construct the maps $\Phi_{{k_1}{k_2}}$ required by the last item.  See Figure \ref{fig:Small2} for a depiction of the small boxes $c_k$.

\begin{figure}[h] 
\centering
\def\svgwidth{.8\textwidth}
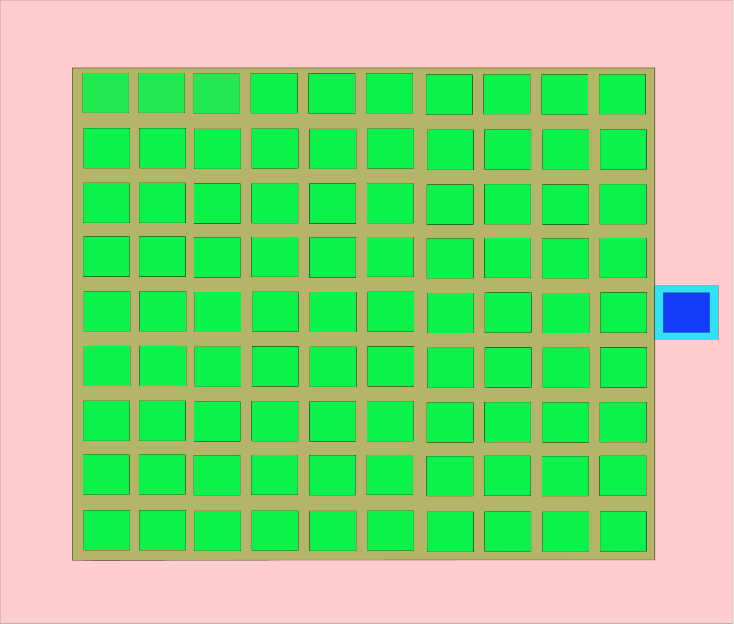
\caption{\label{fig:Small2} The small boxes $c_k$, in bright green, are added to the previous picture;  they are symplectomorphic to $U(c_k)$ depicted in dark blue.  Every orbit of $\Psi S \Psi^{-1}$ spends more time than $ 1- \varepsilon'$ in $W \cup_k c_k$. \newline
    Any two small boxes $c_{k_1}, c_{k_2}$, in bright green, can be exchanged via swapping several adjacent boxes.  Similarly, $U(c_k)$ can be swapped with the small box adjacent to it.}
\end{figure}

\medskip

\noindent {\bf Proof of the last item (Transport): }

 We will need to introduce a bit of notation for the remainder of the proof.  For any small box $c_k$, there exists (unique) $i_k$ such that $ c_k \subset B'_{i_k} \subset B_{i_k}$. We will denote $B'_{i_k}, B_{i_k}$ by $B'(c_k), B(c_k)$, respectively.    Similarly, we  will denote $U_{i_k}(\delta)$ and $V_{i_k}$ by $U(c_k)$ and $V(c_k)$, respectively.   Observe that $U(c_k)$ and $c_k$ are both polydiscs symplectomorphic to $[-\delta, \delta]^{2n}.$   

    Roughly speaking, our goal here is to find a symplectomorphism $\Phi_{k_1 k_2}$ which exchanges  two given small boxes $c_{k_1}$ and $c_{k_2}$  while leaving the remaining small boxes more or less untouched.  Of course, the remaining boxes cannot be left entirely untouched because  $\Phi_{k_1 k_2}$ must commute with $S_{\frac{1}{q}}$ and so we are automatically forced to swap $\mathcal{O}(c_{k_1})$ and $ \mathcal{O}(c_{{k_2}})$.  However, the exchange may  be achieved such that the remaining $c_k$'s do not leave their transportation boxes.  The rough idea of the construction is as follows: First, any two small boxes $c_{k_1}, c_{k_2}$ which are in the same transportation box $B'(c_k)$ can be swapped; the remaining small boxes in $B'(c_k)$ might be affected, however, they will remain in $B'(c_k)$; this is the content of Claim \ref{cl:swapping_small_boxes} and its proof is rather evident in dimension two, see Figure \ref{fig:Small3}, and the  argument generalizes to higher dimensions.     Second, a similar reasoning can be used to obtain a symplectomorphism which swaps $U(c_k)$ with the small box adjacent to it; see Figure \ref{fig:Small3} and Claim \ref{cl:transport_to_exit_box}.  Third, we can exchange $U(c_{k_1})$ and $U(c_{k_2})$;  see Figure \ref{fig:Small3} and Claim \ref{cl:transport_small_boxes}. Finally, we obtain $\Phi_{k_1 k_2}$ by combining the above facts.

 \begin{figure}[h] 
\centering
\def\svgwidth{.8\textwidth}
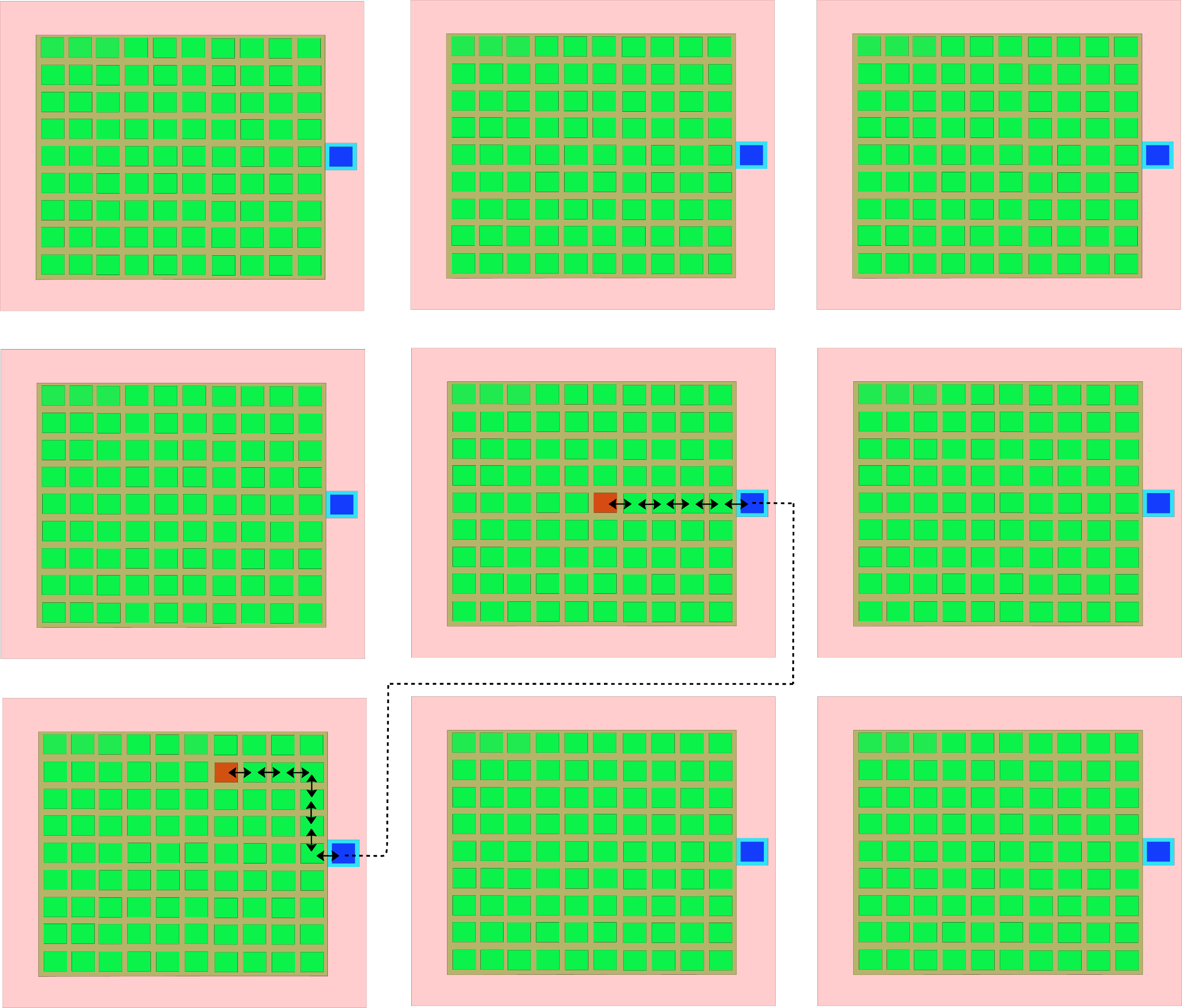
\caption{\label{fig:Small3} 
Depiction of $\Phi_{k_1k_2}$ exchanging $c_{k_1}$ and $c_{k_2}$ which are colored in red.  First, via swapping adjacent small boxes we map $c_{k_1}$ and $c_{k_2}$ to $U(c_{k_1})$ and $U(c_{k_2})$, respectively.  The swaps are depicted by the two-headed arrows.  We then exchange $U(c_{k_1})$ and $U(c_{k_2})$ using the space in between the transportation boxes; the dotted line depicts a path taken by a symplectic isotopy swapping $U(c_{k_1})$ and $U(c_{k_2})$.  The existence of such isotopy is guaranteed by Claim \ref{cl:transport_small_boxes}.
 }
\end{figure}

  We will be needing the following two claims.

  \begin{claim}   \label{cl:swapping_small_boxes}
 Fix a small box $c_k$ and  let $c_{k_1}, c_{k_2}$ be any two small boxes which are contained in $B(c_{k})$.  Then, there exists $\Phi_{{k_1}{k_2}} \in \Symp_0(M, \omega)$ with the following property:
    \begin{enumerate}
    \item $\Phi_{{k_1}{k_2}} $ is supported in $ \cup_j S_{\frac{j}{q}} (B'(c_k)) \subset \cup_j S_{\frac{j}{q}} (B(c_k)) \subset M \setminus W$, 
    \item $\Phi_{{k_1}{k_2}} $ commutes with $S_{\frac{1}{q}}$,
    \item $\Phi_{{k_1}{k_2}} (c_{k_1}) = c_{k_2}$ and  $\Phi_{{k_1}{k_2}} (c_{k_2}) = c_{k_1}$,
    \item $\Phi_{{k_1}{k_2}} $ acts as a permutation on the set $\{c_{k'}:  c_{k'} \subset B(c_k)\}$ .  
    \end{enumerate}
\end{claim}
\begin{proof}[Proof of Claim \ref{cl:swapping_small_boxes}]
Since the set of $B_i'$'s is invariant under the action of $S_{\frac{1}{q}}$, it is enough to prove the following:   There exists a symplectomorphism $\phi_{{k_1}{k_2}} $  which is compactly supported in $B'(c_k)$, is isotopic to the identity,  and has the following properties:
    \begin{enumerate}
   \item  $\phi_{{k_1}{k_2}} (c_{k_1}) = c_{k_2}$ and  $\phi_{{k_1}{k_2}} (c_{k_2}) = c_{k_1}$,
   \item  $\phi_{{k_1}{k_2}} $ acts as a permutation on the set $\{c_{k'}:  c_{k'} \subset B(c_k)\}$. 
    \end{enumerate}
  We leave it to the reader to check that the existence of  $\phi_{{k_1}{k_2}} $ can be deduced from Lemma \ref{lemm:polydisc_swapping_lemma}.  The map $\phi_{{k_1}{k_2}} $ corresponds to a symplectomorphism which, within the same transportation box $B(c_k)$,  exchanges two bright green squares in either of Figures \ref{fig:Small2} or \ref{fig:Small3}.  This can be achieved via a composition of symplectomorphisms which swap adjacent squares as depicted in Figure \ref{fig:Small3}.
\end{proof}

\begin{claim}   \label{cl:transport_to_exit_box}
  For each small box $c_k$, there exists  $\Theta_k \in \Symp_0(M, \omega)$  with the following properties:
    \begin{enumerate}
    \item $\Theta_k$ is supported in $ \cup_j S_{\frac{j}{q}} (B(c_k)) \subset M \setminus W$, 
    \item $\Theta_k$ commutes with $S_{\frac{1}{q}}$,
    \item $\Theta_k (c_k) = U(c_k)$, 
    \item  $\Theta_k$ acts as a permutation on the collection of polydiscs $\{U(c_k)\} \cup \{c_{k'}: c_{k'}  \subset B(c_k)\}$.  
    \end{enumerate}
\end{claim}
\begin{proof}[Proof of Claim \ref{cl:transport_to_exit_box}]
   Since the set of $B_i$'s is invariant under the action of $S_{\frac{1}{q}}$, it is enough to prove the following:   For each small box $c_k$, there exists a symplectomorphism $\theta_k$ which is compactly supported in $B(c_k)$, is isotopic to the identity, with the following properties:
    \begin{enumerate}
    \item $\theta_k (c_k) = U(c_k)$,
    \item  $\theta_k$ acts as a permutation on the collection of polydiscs $\{U(c_k)\} \cup \{c_{k'}: c_{k'}  \subset B(c_k)\}$.  
    \end{enumerate}
 We leave it to the reader to check that the existence of  $\theta_k $ can be deduced from Lemma \ref{lemm:polydisc_swapping_lemma}.   In Figure \ref{fig:Small3},   $\theta_k $ corresponds to a symplectomorphism which exchanges the dark blue square with the red square in the adjacent transportation box; it is obtained as a composition of maps that permute adjacent squares, as indicated by the two-headed arrows.
\end{proof}
We now prove the last item (Transport) in the statement of Lemma \ref{lemm.smallboxes} using Claims \ref{cl:transport_small_boxes}, \ref{cl:swapping_small_boxes},  \ref{cl:transport_to_exit_box}.   

Note that it is sufficient to prove the statement upto replacing $c_{k_1}$ or $c_{k_2}$ with any element of  $\mathcal{O}(c_{k_1})$ or $\mathcal{O}(c_{k_2})$, respectively.  First, we consider the simpler case where there exists $j$ such that $S_{\frac{j}{q}}(c_{k_2}) \subset B(c_{k_1})$. Then, upto replacing $c_{k_2}$ with $S_{\frac{j}{q}}(c_{k_2}) $, we may assume that  $B(c_{k_1}) = B(c_{k_2})$.  In this case, $\Phi_{{k_1}{k_2}}$ is given by Claim \ref{cl:swapping_small_boxes}. 

 Next, we treat the case where  $B(c_{k_1}) \neq B(c_{k_2})$ even upto  replacing $c_{k_1}, c_{k_2}$ with elements of  $\mathcal{O}(c_{k_1}), \mathcal{O}(c_{k_2})$; Figure \ref{fig:Small3} depicts $\Phi_{{k_1}{k_2}}$ in this scenario.  Let $\Theta_{k_1}, \Theta_{k_2}$ be as given by Claim \ref{cl:transport_to_exit_box}; note that these two maps have disjoint supports.   Then,  define

  $$ \Phi_{{k_1}{k_2}} = \Theta_{k_1}^{-1} \Theta_{k_2}^{-1} \Upsilon \Theta_{k_2} \Theta_{k_1},$$
  where $\Upsilon \in \Symp_0(M\setminus W) $ commutes with $S_{\frac{1}{q}}$   and satisfies  $\Upsilon(U(c_{k_1} ) )= U(c_{k_2})$ and $\Upsilon(U(c_{k_2} )) = U(c_{k_1})$; the existence of $\Upsilon$ is guaranteed by Claim \ref{cl:transport_small_boxes} and is depicted by the dotted line  in Figure \ref{fig:Small3}.  We leave it to the reader to check that $ \Phi_{{k_1}{k_2}}$ satisfies all the requirements of the last item of Lemma \ref{lemm.smallboxes}.  
\end{proof}

\subsection{From boxes to Proposition~\ref{prop.main}}
Having proven the lemmas of Sections \ref{sec:equidistribution}, \ref{sec:transportation}, and \ref{sec:small_boxes}, we are now well-positioned to prove the following proposition which in turn will entail Proposition~\ref{prop.main}.

\begin{prop}
\label{prop.main2}
For any $q \in \cQ$ and any $\varepsilon, \varepsilon' > 0$,  there exist $h \in \Symp_0(M, \omega)$ and  $A_1, \ldots, A_N$ closed subsets of $M$ such that:

\begin{enumerate}
\item The sets $A_{i}$ satisfy the Equidistribution Property from Lemma \ref{lemm.equidistributionboxes}, 

\item The support of $h$ is disjoint from $\Fix(S)$ and $h S_{\frac{1}{q}} = S_{\frac{1}{q}} h $,

\item Every orbit of the conjugated action $h S h^{-1}$ is almost equidistributed among the sets $A_i$ in the following sense:  There exists $ E \subset M$ such that for each $x \in M$ we have  $\Leb_{\bbS^1}(\{ t \in \bbS^1:  h S_{t} h^{-1}(x) \in E\}) < \varepsilon'$ and the following properties are satisfied:
\begin{enumerate}
\item   $ E \subset \cup_i A_i $ and $ \partial A_i \subset E$ for every $i$,
\item For each $x \in M$, let $I_i(x): = \{t\in \bbS^1: h S_{t} h^{-1}(x) \in A_i \setminus E\}$,  Then, $\mathrm{Leb}(I_i(x)) = \mathrm{Leb}(I_j(x))$ for all $i,j$.
\end{enumerate}
\end{enumerate}
 \end{prop}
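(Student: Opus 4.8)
The plan is to produce $h$ in the form $h=\Theta\circ\Psi$, where $\Psi$ is the symplectomorphism furnished by Lemma~\ref{lemm.smallboxes}, the sets $A_1,\dots,A_N$ are the equidistribution boxes of Lemma~\ref{lemm.equidistributionboxes}, and $\Theta\in\Symp_0(M,\omega)$ is a symplectomorphism, still to be constructed, which is supported in $M\setminus W$ (where $W=M\setminus\bigcup_iA_i\supset\Fix(S)$), commutes with $S_{\frac{1}{q}}$, and acts as a permutation on the set $\{c_k\}$ of small boxes. For any such $\Theta$, properties (1) and (2) of the proposition are immediate, and we choose the exceptional set to be $E:=\bigl(\bigcup_iA_i\bigr)\setminus\bigl(\bigcup_kc_k\bigr)$. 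Since $\Theta$ fixes $W$ pointwise and permutes the $c_k$, one has $\Theta^{-1}(E)=E$, so the $hSh^{-1}$-orbit of a point $x$ spends in $E$ exactly as much time as the $\Psi S\Psi^{-1}$-orbit of $\Theta^{-1}(x)$ spends outside $W\cup\bigcup_kc_k$, which is less than $\varepsilon'$ by item~(4) of Lemma~\ref{lemm.smallboxes}. Because each $c_k$ lies in the interior of exactly one $A_i$, we also get $E\subset\bigcup_iA_i$ and $\partial A_i\subset E$. Thus (3a) and the measure estimate in (3) hold for \emph{any} admissible $\Theta$, and only (3b) must be secured by a clever choice of $\Theta$.

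For (3b), set $\tau_c(y):=\Leb_{\bbS^1}\{t:\Psi S_t\Psi^{-1}(y)\in c\}$ for a small box $c$ and $y\in M$. Writing $x=\Theta(y)$ and using that $\Theta$ permutes the $c_k$, a direct computation gives
$$
\Leb(I_i(x))=\sum_{c_k\subset A_i}\tau_{\Theta^{-1}(c_k)}(y).
$$
Now $\Psi$ commutes with $S_{\frac{1}{Nq}}$, hence so does the conjugated action $\Psi S\Psi^{-1}$; consequently its orbits spend in any measurable set $F$ the same amount of time they spend in $S_{\frac{1}{Nq}}(F)$, and applying this to the small boxes shows that $\tau_c$ and $\tau_{c'}$ coincide as functions on $M$ whenever $c$ and $c'$ lie in a common orbit of $S_{\frac{1}{Nq}}$ acting on $\{c_k\}$. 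Therefore (3b) holds for \emph{every} $x$ as soon as $\Theta$ has the following purely combinatorial property: for each orbit $\mathcal C$ of $S_{\frac{1}{Nq}}$ on $\{c_k\}$, the number of boxes of $\mathcal C$ that $\Theta$ maps into a given $A_i$ does not depend on $i$.

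It remains to construct such a $\Theta$. The relevant divisibilities are compatible --- $S_{\frac{1}{Nq}}$ permutes $\{c_k\}$ in $Nq$-cycles, $S_{\frac{1}{q}}$ permutes $\{A_i\}$ in $q$-cycles, and each $A_i$ contains the same number of small boxes --- so the target distribution, namely $q$ boxes of each $S_{\frac{1}{Nq}}$-orbit in each $A_i$, is arithmetically consistent. One first shows, by an elementary block-by-block counting argument carried out over the $S_{\frac{1}{q}}$-orbits of the $A_i$ and the $S_{\frac{1}{q}}$-orbits of small boxes, that there exists a permutation $\sigma$ of $\{c_k\}$ commuting with the $S_{\frac{1}{q}}$-permutation and enjoying the property above. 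One then realizes $\sigma$ by a symplectomorphism: the transport maps $\Phi_{k_1k_2}$ of item~(5) of Lemma~\ref{lemm.smallboxes}, post-composed with the swapping maps of Claim~\ref{cl:swapping_small_boxes} (which realize $S_{\frac{1}{q}}$-equivariant permutations of the small boxes inside a single transportation box), can be arranged to induce on $\{c_k\}$ exactly the transposition of the two $S_{\frac{1}{q}}$-orbits $\mathcal O(c_{k_1})$ and $\mathcal O(c_{k_2})$ while fixing all other small boxes, without leaving $M\setminus W$ and while commuting with $S_{\frac{1}{q}}$. Since any two distinct $S_{\frac{1}{q}}$-orbits of small boxes arise this way and transpositions generate the symmetric group, a finite composition of such maps produces a $\Theta\in\Symp_0(M,\omega)$, supported in $M\setminus W$, commuting with $S_{\frac{1}{q}}$, and inducing $\sigma$ on $\{c_k\}$. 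Setting $h=\Theta\circ\Psi$ then gives the three conclusions.

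I expect the main obstacle to lie in this last step: one must track the effect of each transport map on \emph{all} the small boxes, not merely the pair being exchanged, cancel the resulting side effects with Claim~\ref{cl:swapping_small_boxes} so that every composite is a genuine transposition of $S_{\frac{1}{q}}$-orbits (with the remaining freedom in the ``phase'' along each orbit shown to be irrelevant), and verify carefully that the various cycle-structure divisibilities are compatible so that the balanced permutation $\sigma$ exists in the first place.
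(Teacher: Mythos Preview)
Your overall strategy coincides with the paper's: set $h=\Theta\circ\Psi$ with $\Psi$ from Lemma~\ref{lemm.smallboxes}, take $E=\bigl(\bigcup_i A_i\bigr)\setminus\Theta\bigl(\bigcup_k c_k\bigr)$ (which equals your $E$ since $\Theta$ permutes the $c_k$), and reduce (3b) to the combinatorial requirement that each $A_i$ receives the same number of boxes from every $S_{\frac{1}{Nq}}$-orbit under $\Theta$. Your derivation of that reduction, including the argument that $\tau_c=\tau_{c'}$ on $M$ when $c,c'$ lie in a common $S_{\frac{1}{Nq}}$-orbit, is correct and matches the paper.

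The divergence, and the gap, is in how you build $\Theta$. You propose to fix an abstract target permutation $\sigma$ and then realize it exactly, which forces you to \emph{cancel} the side effects of each $\Phi_{k_1k_2}$ on boxes outside $\mathcal O(c_{k_1})\cup\mathcal O(c_{k_2})$. This is precisely the obstacle you flag, and it is genuine: Claim~\ref{cl:swapping_small_boxes} only produces a map that swaps two specified boxes while permuting the remaining ones \emph{in some unspecified way}, and nothing guarantees that compositions of such maps generate the full symmetric group on the boxes inside a given $B'$. So the cancellation step, as written, may simply fail.

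The paper avoids this entirely by noticing that cancellation is unnecessary. Item~5(c) of Lemma~\ref{lemm.smallboxes} says the side effects of $\Phi_{k_1k_2}$ keep every untouched box inside its own transportation box $B'(c_k)\subset A_{j(k)}$; hence side effects \emph{never change which equidistribution box a small box sits in}, and are therefore invisible to the equidistribution count. With this observation the paper builds $\Theta$ greedily (Claim~\ref{cl:distributing_small_boxes}): treat the $S_{\frac{1}{Nq}}$-orbits one at a time; if the current orbit is overrepresented in some $A_1$ and underrepresented in some $A_2$, a pigeonhole count produces a box $c_{k_2}\subset A_2$ from a not-yet-treated orbit, and one applies $\Phi_{k_1k_2}$. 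Item~5(c) ensures all previously equidistributed orbits stay equidistributed. Iterating finitely many times yields $\Theta$. No target $\sigma$ is fixed in advance, no side effects are cancelled, and the phase ambiguity never enters.
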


 \begin{proof}[Proof of Proposition \ref{prop.main2}]
We will be applying the lemmas of the previous sections with the given $q$, $\varepsilon$ and $\varepsilon'$.
 Lemma \ref{lemm.equidistributionboxes} gives us the equidistribution boxes $A_1, \ldots, A_N$.

 Applying  Lemma \ref{lemm.smallboxes} we obtain small boxes $\{c_k\}$ and $\Psi \in \Symp_0(M, \omega)$ satisfying, among others, the following properties:  

\begin{enumerate}
\item Each $A_i$ contains the same number of small boxes $c_k$,

\item  $S_{\frac{1}{Nq}}$ acts by a cyclic permutation of order $Nq$ on the $c_k$'s. 

\item $\Psi$ is $C^{\infty}$--close to the identity, its support is disjoint from $\Fix(S)$, and it commutes with $S_{\frac{1}{Nq}}$,

\item Every orbit of the circle action $\Psi S \Psi^{-1}$  spends more time than $1-\varepsilon'$  in $ W \cup_{k} c_{k} $. 
\end{enumerate}

   Given a small box $c$, we will denote by 
   $$
   \mathcal{O}_{q}(c) = \{S_{\frac{j}{q}} (c) \mid j  = 0, \dots, q-1 \}, \ \ \ \ 
   \mathcal{O}_{Nq}(c) = \{S_{\frac{j}{Nq}} (c) \mid j = 0, \dots, Nq-1 \} 
   $$
the orbits of $c$ respectively under the actions of $S_{\frac{1}{q}}$  and $S_{\frac{1}{Nq}}$.
 Using Lemma \ref{lemm.smallboxes}, we can prove the following claim.
\begin{claim}\label{cl:distributing_small_boxes}
There exists $\Theta  \in \Symp_0(M)$ which is compactly supported in $ M \setminus W$ such that 

\begin{enumerate}
\item $\Theta S_{\frac{1}{q}} = S_{\frac{1}{q}} \Theta $,

\item   For any small box $c$, the interior of each equidistribution box $A$ contains exactly $q$ of the elements of the set $$ \Theta ( \mathcal{O}_{Nq}(c)) = \{ \Theta(c), \Theta( S_{\frac{1}{Nq}}(c) ) , \dots, \Theta( S_{\frac{Nq-1}{Nq}}(c)) \}.$$
\end{enumerate}
\end{claim}
\begin{proof}[Proof of Claim \ref{cl:distributing_small_boxes}]
   We begin by explaining the main  idea  of the proof of this before proceeding to the give the details of the proof.

  Given a small box $c$, and any symplectomorphism $\Theta$, we will say $\Theta (\mathcal{O}_{Nq}(c))$ is equidistributed if each equidistribution box $A$ contains exactly $q$ of its  elements.   Note that if  $\mathcal{O}_{Nq}(c))$ is equidistributed for every $c$, then we are done with $\Theta = \Id$.  If not we can find transportation boxes say $A_1, A_2$, and a small box $c_{k_1}$ contained in $A_1$ such that $A_1$ contains more than $q$ of the elements of $ \mathcal{O}_{Nq}(c_{k_1})$ and $A_2$ contains less than $q$ of them.  Since each $A_i$ contains exactly the same number of small boxes, we can find a small box, which we denote by $c_{k_2}$, such that $A_2$ contains more than $q$ of the elements of $\mathcal{O}_{Nq}(c_{k_2})$.    By the transport item of Lemma \ref{lemm.smallboxes},  there exists  $\Phi_{{k_1}{k_2}}$,  a compactly supported symplectomorphism of  $M \setminus W$, which commutes with $S_{\frac{1}{q}}$ and has the following property:   it exchanges the orbits $\mathcal{O}_q(c_{k_1})$ and $\mathcal{O}_q(c_{k_2})$. 
   As for the other small boxes, $\Phi_{{k_1}{k_2}}$ leaves them nearly unchanged in the sense that  $c$ and $\Phi_{{k_1}{k_2}}(c)$ remain in the same equidistribution box.  We see that after applying  $\Phi_{{k_1}{k_2}}$, $A_1$ will contain one less of the elements of $ \mathcal{O}_{Nq}(c_{k_1})$ and $A_2$ will contain one more.  Repeating this process will allow us to construct the map $\Theta$ as the compositions of all such $\Phi_{{k_1}{k_2}}$'s.

  We will now proceed to give more details of the proof.  As will be explained below, we will successively construct, for $k=1, \dots$,  symplectomorphisms  $\Theta_k$ which are compactly supported in $M\setminus W$, commute with $S_{\frac{1}{q}}$,  act as a permutation on the collection of small boxes and such that $\Theta_{k} \circ \cdots \circ \Theta_{1} (\mathcal{O}_{Nq}(c_k))$ is equidistributed.
 Furthermore, each $\Theta_k$ will have the following additional property: For each small box $c$ denote by $A(c)$ the equidistribution box which contains it.  Then, for all $1 \leq i \leq k-1$ and every $c\in \mathcal{O}_{Nq}(c_i)$ we have $$A ( \Theta_k \circ   \Theta_i \cdots \circ \Theta_1 (c)) = A ( \Theta_i \cdots \circ \Theta_1(c)).$$
  This implies, in particular,  that $\Theta_k \circ \cdots \circ \Theta_1 (\mathcal{O}_{Nq}(c_i))$ is equidistributed for all $1 \leq i \leq k$.   Once $\Theta_k$ with such properties is constructed we can simply set $\Theta$ to be the composition of all the $\Theta_k$'s.
  
   Leaving the case where $k=1$ to the reader, we will now describe the construction of $\Theta_k$, assuming $\Theta_1, \ldots, \Theta_{k-1}$ have been constructed.     If $ \Theta_{k-1} \circ \cdots \circ \Theta_1 (\mathcal{O}_{Nq}(c_k))$ is equidistributed we set $\Theta_k = \Id$.  If not, we can find two equidistribution boxes say $A_1, A_2$, such that $A_1$ contains more than $q$ of the elements of $ \Theta_{k-1} \circ \cdots \circ \Theta_1 (\mathcal{O}_{Nq}(c_k))$ and $A_2$ contains less than $q$ of them.  
By induction we know that $A_{2}$ contains exactly $q$  of the elements of $ \Theta_{k-1} \circ \cdots \circ \Theta_1 (\mathcal{O}_{Nq}(c_{k'}))$ for $k' < k$,  
   and moreover each $A_i$ contains exactly the same number of small boxes. Thus, there exists $k'>k$ such that  $A_2$ contains more than $q$ of the elements of $\Theta_{k-1} \circ \cdots \circ \Theta_1 (\mathcal{O}_{Nq}(c_{k'}))$ (in the sequel we will just need one of these elements).

   
   Let  $c_{k_1}, c_{k_2}$ denote $\Theta_{k-1} \circ \cdots \circ \Theta_1 (c_k) $ and $\Theta_{k-1} \circ \cdots \circ \Theta_1 (c_{k'})$, respectively.   By the transport item of Lemma~\ref{lemm.smallboxes},  there exists  $\Phi_{{k_1}{k_2}}$  a compactly supported symplectomorphism of  $M \setminus W$ which commutes with $S_{\frac{1}{q}}$ and
satisfies properties (a), (b), (c) of  Lemma~\ref{lemm.smallboxes}.

We leave it to the reader to check that property (c) has the following consequence: for all $1 \leq i \leq k-1$ and every $c\in \mathcal{O}_{Nq}(c_i)$ we have $$A (  \Phi_{{k_1}{k_2}}  \circ   \Theta_i \cdots \circ \Theta_1 (c)) = A ( \Theta_i \circ \cdots \circ \Theta_1(c)).$$
  This implies, in particular,  that $ \Phi_{{k_1}{k_2}} \circ \cdots \circ \Theta_1 (\mathcal{O}_{Nq}(c_i))$ is equidistributed for all $1 \leq i \leq k-1$. 

By property (b),  the number of elements of  $ \Phi_{{k_1}{k_2}} \circ \Theta_{k-1} \circ \cdots \circ \Theta_1 (\mathcal{O}_{Nq}(c))$ which are contained in $A_1$ is one less than the number of elements of $\Theta_{k-1} \circ \cdots \circ \Theta_1 (\mathcal{O}_{Nq}(c))$  which are contained in $A_1$.  It follows that by repeatedly applying the transport item of Lemma~\ref{lemm.smallboxes}, we can continue the above process to obtain other $\Phi_{{k_i}{k_j}}$'s the composition of all of which gives the map $\Phi_k$. 
\end{proof}

We will now show that Claim \ref{cl:distributing_small_boxes} implies Proposition \ref{prop.main2}.  Indeed, let $h= \Theta  \Psi$ and consider the conjugated circle action $h S h^{-1}$.  It is clear that the first two items in the statement of the proposition hold.  We must prove the third item. We define the set $E:= \cup A_i \setminus \Theta  ( \cup_{k} c_{k} )$.  It is clear that $E \subset  \cup A_i $  and $\partial A_i \subset E$ for each $i$. 

Observe that, by point 4 of Lemma \ref{lemm.smallboxes}, every orbit of the circle action $h S h^{-1}$ spends more time than $1- \varepsilon'$ in  $\Theta ( W \cup_{k} c_{k}  ) =  W \cup \, \Theta  ( \cup_{k} c_{k} ) $. Hence,  we immediately obtain $\Leb_{\bbS^1}(\{ t \in \bbS^1:  h S_{t} h^{-1}(x) \in E\}) < \varepsilon'$ for every $x$.

It remains to show that $\mathrm{Leb}(I_i(x)) = \mathrm{Leb}(I_j(x))$ for all $i,j$.  This is equivalent to showing that the quantity $$\mathrm{Leb} (\{t\in \bbS^1 : h S_{t} h^{-1}(x) \in A_i \cap \Theta  ( \cup_{k} c_{k} ) \} )$$ does not depend on $i$.  Now, using the action of $S_{\frac{1}{Nq}}$ on the $c_{k}$'s, we see that this quantity equals
$$\sum_{\mathcal{O}_{Nq} (c)  } \mathrm{Leb} (\{t\in \bbS^1 : hS_{t} h^{-1}(x) \in A_i \cap \Theta  \left( \mathcal{O}_{Nq} (c) \right)  \} ),$$
where the sum is taken over distinct $\mathcal{O}_{Nq}(c)$'s.  Hence,  it is sufficient to show that  for any small box $c$ the quantity
$$\mathrm{Leb} ( \; \{t\in \bbS^1 : h S_{t} h^{-1}(x) \in A_i \cap \Theta  \left(  \mathcal{O}_{Nq}(c) \right) \} \; )$$ deos not depend on $i$.   By Claim \ref{cl:distributing_small_boxes}, there exists $q$ elements, say $c_1, \ldots, c_q \in \mathcal{O}_{Nq}(c)$ such that $A_i \cap \Theta  \left(  \mathcal{O}_{Nq}(c) \right) = \Theta (c_1)  \cup \ldots \cup \Theta (c_q)$.  Thus, 

$$\mathrm{Leb} ( \{t\in \bbS^1 : h S_{t} h{-1}(x) \in A_i \cap \Theta  \left(  \mathcal{O}_{Nq}(c) \right) \} )$$ 
$$ =  \sum_{j=1}^{q} \mathrm{Leb} (  \{t\in \bbS^1 : hS_{t} h^{-1}(x) \in \Theta  \left(  c_j \right) \} ).  $$
Now, recall that $h=  \Theta \Psi$ and so  $\mathrm{Leb} (  \{t\in \bbS^1 : hS_{t} h^{-1}(x) \in \Theta  \left(  c_j \right) \} )$  coincides with  $\mathrm{Leb} (  \{t\in \bbS^1 : \Psi S_{t} \Psi^{-1}(z) \in   c_j \} )$, where $z = \Theta^{-1}(x)$.  Lastly, because $\Psi$ commutes with $S_{\frac{1}{Nq}}$ and $c_j \in \mathcal{O}_{Nq}(c)$,  we have that $\mathrm{Leb} (  \{t\in \bbS^1 : \Psi S_{t} \Psi^{-1}(z) \in   c_j \} ) = \mathrm{Leb} (  \{t\in \bbS^1 : \Psi S_{t} \Psi^{-1}(z) \in   c \} ) $.  Hence,

 $$\mathrm{Leb} ( \{t\in \bbS^1 : hS_{t} h^{-1}(x) \in A_i \cap \Theta  \left(  \mathcal{O}_{Nq}(c) \right) \} )$$ 
$$ =  q \;  \mathrm{Leb} (  \{t\in \bbS^1 : \Psi S_{t} \Psi^{-1}(z) \in   c \} ), $$
which clearly does not depend on $i$; the above equality follows from the second item of Claim \ref{cl:distributing_small_boxes}.  This finishes the proof of Proposition \ref{prop.main2}.
\end{proof}

It remains to explain why Proposition \ref{prop.main} follows from Proposition \ref{prop.main2}.

\begin{proof}[Proof of Proposition \ref{prop.main}]
 Let $\cU$ denote an open neighborhood of $\Conv(\cE)$.  Clearly, the symplectomorphism $h$, given to us by Proposition \ref{prop.main2}, satisfies the first two items of Proposition \ref{prop.main}.  It remains to prove the third item, that is, if $\varepsilon$ and $\varepsilon'$ are small enough, for every $x \in M$, the push-forward of  $\mathrm{Leb}_{\bbS^1}$, the Lebesgue measure on the circle, under the map $t \mapsto h S_{t} h^{-1}(x)$, belongs to $\cU$.

Fix $x \in M$ and let $\mu$ be the push-forward of  $\mathrm{Leb}_{\bbS^1}$ under the map $t \mapsto h S_{t} h^{-1}(x)$.  Fix $\varepsilon>0$ and let $\varepsilon' = \frac{\varepsilon}{2N}$. We leave it to the reader to check that, as a consequence of the third item of Proposition \ref{prop.main2}, $\mu$ has the property that
\begin{equation}\label{eq:estimate_mu}
\sum_{i=1}^N \vert \mu(A_i) - \frac{\alpha}{N} \vert < (N + 1) \varepsilon' < \varepsilon,
\end{equation}
where $\alpha =  \mu( \cup_i A_i)$.
We will show  that any probability measure which satisfies  the above property for sufficiently small $\varepsilon> 0$ belongs to $\cU$.   

Let $\nu \in \cP(M)$ and recall that a basis of open neighborhoods of $\nu$ for the week topology is given by the collection of sets of the the form 
$$U_{\delta, f_1, \ldots, f_k}(\nu):= \left\{\beta\in \cP(M):  \, \left\vert \int f \, d\nu - \int f \, d\beta \, \right\vert < \delta, \;  \forall f \in \{f_1, \ldots, f_k \}  \right\},$$
where $\delta > 0 $ is a real number and $f_1, \ldots, f_k$ are continuous functions on $M$.   

\begin{claim}\label{cl:exercise}
There exists $ \delta > 0$ and continuous functions $f_1, \ldots, f_k : M \rightarrow \bbR$, satisfying $\Vert f_i \Vert_{\infty} \leq 1$ for each $i$, such that for every $\nu \in \Conv(\cE)$ we have $$ U_{\delta, f_1, \ldots, f_k}(\nu) \subset \cU.$$
\end{claim}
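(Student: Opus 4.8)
The plan is to reduce Claim~\ref{cl:exercise} to a standard compactness argument, exploiting that the weak topology on $\cP(M)$ is metrizable by a metric built out of uniformly bounded test functions. Since $M$ is a closed manifold, $C(M)$ with the sup norm is separable; I would fix a sequence $(g_n)_{n\geq 1}$ which is dense in the closed unit ball of $C(M)$, so in particular $\Vert g_n\Vert_\infty\leq 1$ for all $n$, and define
$$
\rho(\mu,\nu)\ :=\ \sum_{n\geq 1} 2^{-n}\Bigl|\int g_n\,d\mu-\int g_n\,d\nu\Bigr|,\qquad \mu,\nu\in\cP(M).
$$
Because probability measures are uniformly bounded as functionals and, after rescaling, $(g_n)$ spans a dense subspace of $C(M)$, the function $\rho$ is a genuine metric on $\cP(M)$ and it induces the weak topology; this is the only place I use that $M$ is compact metrizable. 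The normalization $\Vert g_n\Vert_\infty\leq 1$ is precisely the one demanded in the statement, so any finite subfamily of the $g_n$'s is an admissible list of test functions $f_1,\dots,f_k$.

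Next I would use compactness twice. The set $\Conv(\cE)$ is the convex hull of a finite set, hence compact, and $\cP(M)\setminus\cU$ is closed in the compact space $\cP(M)$, hence compact; if it is empty then $\cU=\cP(M)$ and there is nothing to prove, so assume it is nonempty. Then $d_0:=\inf\{\rho(\mu,\nu): \mu\in\Conv(\cE),\ \nu\in\cP(M)\setminus\cU\}>0$, being the distance between two disjoint compacta, and consequently the open $\rho$-ball of radius $r$ about any point of $\Conv(\cE)$ is contained in $\cU$ as soon as $0<r<d_0$. Fix such an $r$. Since $\Vert g_n\Vert_\infty\leq 1$, each summand of $\rho$ is at most $2$, so the tail $\sum_{n>N}2^{-n}\cdot 2 = 2^{-N+1}$ can be made smaller than $r/2$ by choosing $N$ large; fix such an $N$ and set $k=N$, $f_i=g_i$ for $i=1,\dots,N$, and $\delta=r/2$.

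It then remains to verify the inclusion. For $\nu\in\Conv(\cE)$ and $\beta\in U_{\delta,f_1,\dots,f_k}(\nu)$ one has $\bigl|\int g_i\,d\beta-\int g_i\,d\nu\bigr|<\delta$ for $i\leq N$, whence
$$
\rho(\beta,\nu)\ \leq\ \delta\sum_{n=1}^N 2^{-n}\ +\ \sum_{n>N}2^{-n+1}\ <\ \delta + 2^{-N+1}\ <\ r,
$$
so $\beta\in\cU$; thus $U_{\delta,f_1,\dots,f_k}(\nu)\subset\cU$ for every $\nu\in\Conv(\cE)$, as required. There is no genuinely hard step here: the claim is really a packaging lemma asserting that the weak neighborhoods appearing in the paper can be taken of a single uniform shape, simultaneously valid for every center $\nu$ in the compact set $\Conv(\cE)$. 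The only point that requires a little care---rather than being an obstacle---is organizing the estimate so that one finite list of normalized test functions and one value of $\delta$ work for all $\nu$ at once, which is exactly what the truncation of the series above accomplishes; an alternative would be to cover $\Conv(\cE)$ by finitely many basic neighborhoods lying in $\cU$ and extract a Lebesgue-number-type constant, but the metric formulation is cleaner.
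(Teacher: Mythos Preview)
Your proof is correct and follows essentially the same approach as the paper's own argument: both use a metric on $\cP(M)$ compatible with the weak topology, take $r$ to be the (positive) distance from the compact set $\Conv(\cE)$ to the closed complement of $\cU$, and observe that basic neighborhoods $U_{\delta,f_1,\dots,f_k}(\nu)$ of a single uniform shape sit inside the $\rho$-ball of radius $r$ about every $\nu$. The paper simply cites Walters for the metrizability step, while you construct the metric $\rho$ explicitly and carry out the tail-truncation; in particular, your choice of the $g_n$ in the unit ball of $C(M)$ makes the normalization $\Vert f_i\Vert_\infty\le 1$ transparent, which the paper leaves implicit in its citation.
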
  
\begin{proof}
The space of probability measures on $M$ is metrizable; more precisely, there exists a metric $d$ such that 
for every radius $r>0$, there exists $\delta>0$ and continuous functions $f_1, \ldots, f_k : M \rightarrow \bbR$ such that for every probability measure $\nu$, the set 
$U_{\delta, f_1, \ldots, f_k}(\nu)$ is contained in the ball of radius $r$ around $\nu$ (see~\cite{walters}, Theorem~6.4).
Now take $r$ equals  to the distance between $\Conv(\cE)$ and the complement of $\cU$. By compactness of $\Conv(\cE)$, $r$ is positive, and the claim follows.
\end{proof}

Recall that $W = \left( M \setminus \bigcup_{i} A_{i} \right) \subset B_{\varepsilon}(\Fix(S))$.   For each $x \in \Fix(S)$, write $W_x  = W \cap B_{\varepsilon}(x)$.  For small enough $\varepsilon$, the set $W_x$ is the connected component of $W$ which contains $x$.   Consider the probability measure  $\nu_{\varepsilon} \in \Conv(\cE)$ defined by
$$
\nu_{\varepsilon} := \sum_{x \in \Fix(S)} \mu(W_x) \delta_{x} + \alpha \mathrm{Vol},
$$
where $\delta_x$ denotes the Dirac measure at $x$.  Note that $\nu_{\varepsilon} \in\Conv(\cE) $ because $\sum_{x \in \Fix(S)} \mu(W_x) + \alpha =  \mu(W) + \mu(\cup A_i) = \mu(M) = 1$. 

\medskip

Proposition \ref{prop.main2} follows immediately from Claim \ref{cl:exercise} and  the next claim.

\begin{claim}\label{cl:mu_in_U}
Let $\delta, f_1, \ldots, f_k$ be as in Claim \ref{cl:exercise}.   If $\mu \in \cP(M)$ satisfies  Equation \eqref{eq:estimate_mu} for a sufficiently small value of $\varepsilon$, then $\mu \in U_{\delta, f_1, \ldots, f_k}(\nu_{\varepsilon})$.
\end{claim}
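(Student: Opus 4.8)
The plan is to bound, for each of the finitely many functions $f = f_i$ supplied by Claim~\ref{cl:exercise} (recall $\Vert f\Vert_\infty \leq 1$), the quantity $\bigl|\int f\,d\mu - \int f\,d\nu_\varepsilon\bigr|$ by an expression tending to $0$ as $\varepsilon \to 0$, and then to fix $\varepsilon$ small. Throughout, write $m_f(\varepsilon)$ for the modulus of continuity of $f$ on the compact manifold $M$, so $m_f(\varepsilon)\to 0$ as $\varepsilon\to 0$.

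First I would record the combinatorics. For $\varepsilon$ small the balls $B_\varepsilon(x)$, $x\in\Fix(S)$, are pairwise disjoint, so $W = \bigsqcup_{x\in\Fix(S)} W_x$ with $\mathrm{diam}(W_x)\leq 2\varepsilon$, and $M = W \sqcup \bigcup_i A_i$. Since the $A_i$ have equal volume, pairwise disjoint interiors, and boundaries of zero volume, $\Vol(A_i) = \tfrac{1-\Vol(W)}{N}$ for each $i$, with $\Vol(W) \leq \Vol(B_\varepsilon(\Fix(S))) \to 0$. To handle the overlaps $A_i\cap A_j \subset \partial A_i$ (of zero volume but possibly positive $\mu$-mass) I would pass to the genuine partition $\widetilde{A}_i := A_i\setminus\bigcup_{j<i}A_j$ of $\bigcup_i A_i$; from \eqref{eq:estimate_mu} one gets $\sum_i\mu(A_i) < \alpha + \varepsilon$, hence $\sum_i\bigl(\mu(A_i)-\mu(\widetilde{A}_i)\bigr) = \sum_i\mu(A_i) - \alpha < \varepsilon$, and therefore $\sum_i\bigl|\mu(\widetilde{A}_i) - \tfrac{\alpha}{N}\bigr| < 2\varepsilon$.

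Next I would split both $\int f\,d\mu$ and $\int f\,d\nu_\varepsilon$ along the partition $M = \bigsqcup_x W_x \sqcup \bigsqcup_i \widetilde{A}_i$, and on each piece replace $f$ by its value at a chosen point of the piece, the error being at most $m_f(2\varepsilon)$ times the mass of that piece. On the $W_x$-pieces this produces $\sum_x\mu(W_x)f(x)$, matching the Dirac part of $\nu_\varepsilon$ up to $m_f(2\varepsilon)\,\mu(W)\leq m_f(2\varepsilon)$. On the $\widetilde{A}_i$-pieces, $\int_{\widetilde{A}_i}f\,d\mu$ lies within $m_f(2\varepsilon)\,\mu(\widetilde{A}_i)$ of $\mu(\widetilde{A}_i)f(a_i)$, while the volume part of $\nu_\varepsilon$ contributes $\alpha\int f\,d\Vol = \alpha\int_W f\,d\Vol + \sum_i\alpha\int_{A_i}f\,d\Vol$, where $|\alpha\int_W f\,d\Vol|\leq\Vol(W)$ and $\alpha\int_{A_i}f\,d\Vol$ lies within $m_f(2\varepsilon)\,\alpha\Vol(A_i)$ of $\alpha\Vol(A_i)f(a_i)$. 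Since $\bigl|\mu(\widetilde{A}_i)-\alpha\Vol(A_i)\bigr| \leq \bigl|\mu(\widetilde{A}_i)-\tfrac{\alpha}{N}\bigr| + \alpha\bigl|\tfrac1N-\Vol(A_i)\bigr|$ and $\alpha\bigl|\tfrac1N-\Vol(A_i)\bigr| = \tfrac{\alpha\Vol(W)}{N}$, summing over the $N$ boxes and using $\Vert f\Vert_\infty\leq1$ together with $\sum_i\bigl|\mu(\widetilde{A}_i)-\tfrac{\alpha}{N}\bigr|<2\varepsilon$ bounds the $\bigcup_i A_i$-part of the discrepancy by $2\varepsilon + 2\Vol(W) + 2m_f(2\varepsilon)$. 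Collecting everything,
$$
\Bigl|\int f\,d\mu - \int f\,d\nu_\varepsilon\Bigr| \;\leq\; 3\,m_f(2\varepsilon) + 2\,\Vol\bigl(B_\varepsilon(\Fix(S))\bigr) + 2\varepsilon .
$$

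Finally, since there are only finitely many $f_i$ and $\Vol(B_\varepsilon(\Fix(S)))\to0$ while each $m_{f_i}(2\varepsilon)\to0$, we may fix $\varepsilon>0$ so small that the right-hand side is $<\delta$ for every $i=1,\dots,k$; this gives $\mu\in U_{\delta,f_1,\dots,f_k}(\nu_\varepsilon)$, which is Claim~\ref{cl:mu_in_U}. Combined with Claim~\ref{cl:exercise} and the fact that $\nu_\varepsilon\in\Conv(\cE)$, it yields $\mu\in\cU$, completing the proof of Proposition~\ref{prop.main}. The only points that require any care are the two simultaneous estimates $\mu(\widetilde{A}_i)\approx\alpha/N$ (coming from \eqref{eq:estimate_mu}) and $\alpha\Vol(A_i)\approx\alpha/N$ (coming from $\Vol(W)$ being small), which must be combined when matching the $\mu$-mass against the volume of each box, and the bookkeeping of the boundary overlaps of the closed boxes $A_i$; neither presents a genuine obstacle.
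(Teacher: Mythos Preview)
Your proof is correct and follows essentially the same route as the paper's: approximate $\int f\,d\mu$ and $\int f\,d\nu_\varepsilon$ by Riemann-type sums over the pieces $W_x$ and $A_i$, then compare the two sums via $\bigl|\mu(A_i)-\alpha\Vol(A_i)\bigr|\leq\bigl|\mu(A_i)-\tfrac{\alpha}{N}\bigr|+\alpha\bigl|\tfrac{1}{N}-\Vol(A_i)\bigr|$ and invoke \eqref{eq:estimate_mu} together with $\Vol(W)\to 0$. The only substantive difference is that you are more careful about the possible $\mu$-mass on the overlaps $A_i\cap A_j\subset\partial A_i$, passing to the genuine partition $\widetilde{A}_i$; the paper simply writes $\sum_i f(y_i)\mu(A_i)$ as if the $A_i$ were disjoint, tacitly using that \eqref{eq:estimate_mu} forces $\sum_i\mu(A_i)-\alpha<\varepsilon$ so the overlap contributes at most an extra $\varepsilon$.
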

We will now provide a proof of the above claim.   Since the $A_i$'s are of diameter less than $\varepsilon > 0$,  the following two inequalities hold for sufficiently small values of $\varepsilon$:
$$ \left\vert \int f \, d\mu - \sum_i f(y_i) \mu(A_i) -  \sum_{x \in \Fix(S)} f(x) \mu(W_x) \right\vert < \frac{\delta}{4}, $$

$$ \left\vert \int f \, d\nu_{\varepsilon} - \sum_i f(y_i) \alpha \mathrm{Vol}(A_i) -  \sum_{x \in \Fix(S)} f(x) \mu(W_x) \right\vert < \frac{\delta}{4},$$
where  $f \in \{f_1, \ldots, f_k\}$ and $y_i \in A_i$.   It follows from the above two inequalities that  $\forall f \in \{f_1, \ldots, f_k\}$ we have
$$ \left\vert \int f \, d\nu_{\varepsilon} - \int f \, d\mu  \right\vert < \sum_i   \vert \alpha \mathrm{Vol}(A_i) - \mu(A_i) \vert + \frac{\delta}{2}.$$   Now, since $\left( M \setminus \bigcup_{i} A_{i} \right) \subset B_{\varepsilon}(\Fix(S))$, we have  $  \sum_i \left\vert \mathrm{Vol}(A_i) - \frac{1}{N} \right\vert < \varepsilon $, if $\varepsilon$ is taken to be sufficiently small. 
Thus, $$ \sum_i  \vert \alpha \mathrm{Vol}(A_i) - \mu(A_i) \vert \leq   \alpha \sum_i \vert \mathrm{Vol}(A_i) - \frac{1}{N} \vert +  \sum_i  \vert \frac{\alpha}{N}  - \mu(A_i) \vert    $$
$$ \leq  \varepsilon + \varepsilon.$$
Note that to obtain the last inequality we have used Inequality \eqref{eq:estimate_mu} and the fact that $\alpha <1$.  Finally, we conclude from the above that, if $\varepsilon$ is taken to be sufficiently small, then 
$$ \left\vert \int f \, d\nu_{\varepsilon} - \int f \, d\mu  \right\vert < \delta,$$
for every $f \in \{f_1, \ldots, f_k\}$.  This completes the proof of Claim \ref{cl:mu_in_U} and hence that of Proposition \ref{prop.main2}.
\end{proof}

\bigskip

%
%

 \bibliographystyle{alpha}
\bibliography{biblio}

\newcommand{\etalchar}[1]{$^{#1}$}
\def\cprime{$'$}
\begin{thebibliography}{AFLC{\etalchar{+}}20}

\bibitem[AFLC{\etalchar{+}}20]{AFLXZ}
Artur Avila, Bassam Fayad, Patrice Le~Calvez, Disheng Xu, and Zhiyuan Zhang.
\newblock On mixing diffeomorphisms of the disc.
\newblock {\em Invent. Math.}, 220(3):673--714, 2020.

\bibitem[AK70]{Anosov-Katok}
D.~V. Anosov and A.~B. Katok.
\newblock New examples of ergodic diffeomorphisms of smooth manifolds.
\newblock {\em Uspehi Mat. Nauk}, 25(4 (154)):173--174, 1970.

\bibitem[Ati82]{Atiyah}
M.~F. Atiyah.
\newblock Convexity and commuting {H}amiltonians.
\newblock {\em Bull. London Math. Soc.}, 14(1):1--15, 1982.

\bibitem[Aud04]{Audin}
Mich\`ele Audin.
\newblock {\em Torus actions on symplectic manifolds}, volume~93 of {\em
  Progress in Mathematics}.
\newblock Birkh\"{a}user Verlag, Basel, revised edition, 2004.

\bibitem[BCLR06]{BCL06}
F.~B\'{e}guin, S.~Crovisier, and F.~Le~Roux.
\newblock Pseudo-rotations of the open annulus.
\newblock {\em Bull. Braz. Math. Soc. (N.S.)}, 37(2):275--306, 2006.

\bibitem[BCLRP04]{BCL04}
F.~B\'{e}guin, S.~Crovisier, F.~Le~Roux, and A.~Patou.
\newblock Pseudo-rotations of the closed annulus: variation on a theorem of
  {J}. {K}wapisz.
\newblock {\em Nonlinearity}, 17(4):1427--1453, 2004.

\bibitem[Bra15a]{Bramham15a}
Barney Bramham.
\newblock Periodic approximations of irrational pseudo-rotations using
  pseudoholomorphic curves.
\newblock {\em Ann. of Math. (2)}, 181(3):1033--1086, 2015.

\bibitem[Bra15b]{Bramham15b}
Barney Bramham.
\newblock Pseudo-rotations with sufficiently {L}iouvillean rotation number are
  {$C^0$}-rigid.
\newblock {\em Invent. Math.}, 199(2):561--580, 2015.

\bibitem[CdS03]{Cannas}
Ana Cannas~da Silva.
\newblock Symplectic toric manifolds.
\newblock In {\em Symplectic geometry of integrable {H}amiltonian systems
  ({B}arcelona, 2001)}, Adv. Courses Math. CRM Barcelona, pages 85--173.
  Birkh\"{a}user, Basel, 2003.

\bibitem[{\c{C}}GG19a]{CGG19b}
Erman {\c{C}}inelli, Viktor~L. Ginzburg, and Ba\c{s}ak~Z. G\"{u}rel.
\newblock From pseudo-rotations to holomorphic curves via quantum steenrod
  squares.
\newblock {\em arXiv:1909.11967}, 2019.

\bibitem[{\c{C}}GG19b]{CGG19a}
Erman {\c{C}}inelli, Viktor~L. Ginzburg, and Ba\c{s}ak~Z. G\"{u}rel.
\newblock Pseudo-rotations and holomorphic curves.
\newblock {\em arXiv:1905.07567}, 2019.

\bibitem[EH89]{Ekeland-Hofer}
I.~Ekeland and H.~Hofer.
\newblock Symplectic topology and {H}amiltonian dynamics.
\newblock {\em Math. Z.}, 200(3):355--378, 1989.

\bibitem[EM02]{Eliashberg-Mishachev}
Y.~Eliashberg and N.~Mishachev.
\newblock {\em Introduction to the {$h$}-principle}, volume~48 of {\em Graduate
  Studies in Mathematics}.
\newblock American Mathematical Society, Providence, RI, 2002.

\bibitem[FH77]{Fathi-Herman}
Albert Fathi and Michael~R. Herman.
\newblock Existence de diff\'{e}omorphismes minimaux.
\newblock In {\em Dynamical systems, {V}ol. {I}---{W}arsaw}, pages 37--59.
  Ast\'{e}risque, No. 49. 1977.

\bibitem[FK04]{Fayad-Katok}
Bassam Fayad and Anatole Katok.
\newblock Constructions in elliptic dynamics.
\newblock {\em Ergodic Theory Dynam. Systems}, 24(5):1477--1520, 2004.

\bibitem[Fra88]{Franks1}
John Franks.
\newblock Generalizations of the {P}oincar\'{e}-{B}irkhoff theorem.
\newblock {\em Ann. of Math. (2)}, 128(1):139--151, 1988.

\bibitem[Fra91]{Franks2}
John Franks.
\newblock Rotation numbers for area preserving homeomorphisms of the open
  annulus.
\newblock In {\em Dynamical systems and related topics ({N}agoya, 1990)},
  volume~9 of {\em Adv. Ser. Dynam. Systems}, pages 123--127. World Sci. Publ.,
  River Edge, NJ, 1991.

\bibitem[GG18]{Ginzburg-Gurel18a}
Viktor~L. Ginzburg and Ba\c{s}ak~Z. G\"{u}rel.
\newblock Hamiltonian pseudo-rotations of projective spaces.
\newblock {\em Invent. Math.}, 214(3):1081--1130, 2018.

\bibitem[Gin10]{Ginzburg}
Viktor~L. Ginzburg.
\newblock The {C}onley conjecture.
\newblock {\em Ann. of Math. (2)}, 172(2):1127--1180, 2010.

\bibitem[GS82]{guillemin-sternberg}
V.~Guillemin and S.~Sternberg.
\newblock Convexity properties of the moment mapping.
\newblock {\em Invent. Math.}, 67(3):491--513, 1982.

\bibitem[Gui94]{Guillemin}
Victor Guillemin.
\newblock {\em Moment maps and combinatorial invariants of {H}amiltonian
  {$T^n$}-spaces}, volume 122 of {\em Progress in Mathematics}.
\newblock Birkh\"{a}user Boston, Inc., Boston, MA, 1994.

\bibitem[HCP19]{Hernandez-Presas}
Luis Hern\'{a}ndez-Corbato and Francisco Presas.
\newblock The conjugation method in symplectic dynamics.
\newblock {\em Rev. Mat. Iberoam.}, 35(1):1--22, 2019.

\bibitem[Hin09]{Hingston}
Nancy Hingston.
\newblock Subharmonic solutions of {H}amiltonian equations on tori.
\newblock {\em Ann. of Math. (2)}, 170(2):529--560, 2009.

\bibitem[Kat73]{Katok}
A.~B. Katok.
\newblock Ergodic perturbations of degenerate integrable {H}amiltonian systems.
\newblock {\em Izv. Akad. Nauk SSSR Ser. Mat.}, 37:539--576, 1973.

\bibitem[LC06]{LeCalvez06}
Patrice Le~Calvez.
\newblock Periodic orbits of {H}amiltonian homeomorphisms of surfaces.
\newblock {\em Duke Math. J.}, 133(1):125--184, 2006.

\bibitem[LC16]{LeCalvez16}
Patrice Le~Calvez.
\newblock A finite dimensional approach to {B}ramham's approximation theorem.
\newblock {\em Ann. Inst. Fourier (Grenoble)}, 66(5):2169--2202, 2016.

\bibitem[MS17]{McDuff-Salamon}
Dusa McDuff and Dietmar Salamon.
\newblock {\em Introduction to symplectic topology}.
\newblock Oxford Graduate Texts in Mathematics. Oxford University Press,
  Oxford, third edition, 2017.

\bibitem[Ono06]{Ono_Flux}
K.~Ono.
\newblock Floer-{N}ovikov cohomology and the flux conjecture.
\newblock {\em Geom. Funct. Anal.}, 16(5):981--1020, 2006.

\bibitem[Pol99]{Polterovich99}
Leonid Polterovich.
\newblock Hamiltonian loops from the ergodic point of view.
\newblock {\em J. Eur. Math. Soc. (JEMS)}, 1(1):87--107, 1999.

\bibitem[Sch02]{Schlenk}
Felix Schlenk.
\newblock An extension theorem in symplectic geometry.
\newblock {\em Manuscripta Math.}, 109(3):329--348, 2002.

\bibitem[She19a]{Shelukhin19a}
Egor Shelukhin.
\newblock Pseudo-rotations and steenrod squares.
\newblock {\em arXiv:1905.05108}, 2019.

\bibitem[She19b]{Shelukhin19b}
Egor Shelukhin.
\newblock Pseudo-rotations and steenrod squares revisited.
\newblock {\em arXiv:1909.12315}, 2019.

\bibitem[Wal82]{walters}
Peter Walters.
\newblock {\em An introduction to ergodic theory}, volume~79 of {\em Graduate
  Texts in Mathematics}.
\newblock Springer-Verlag, New York-Berlin, 1982.

\end{thebibliography}


{\small

\medskip
\noindent Fr\'ed\'eric Le Roux \\
\noindent Sorbonne Universit\'e, Universit\'e de Paris, CNRS, Institut de Math\'ematiques de Jussieu-Paris Rive Gauche, F-75005 Paris, France.\\
 {\it e-mail:} frederic.le-roux@imj-prg.fr
\medskip

\medskip
 \noindent Sobhan Seyfaddini\\
\noindent Sorbonne Universit\'e, Universit\'e de Paris, CNRS, Institut de Math\'ematiques de Jussieu-Paris Rive Gauche, F-75005 Paris, France.\\
 {\it e-mail:} sobhan.seyfaddini@imj-prg.fr

}

\end{document}